\documentclass[a4paper,12pt,pdf]{amsart}
\usepackage{enumerate, amsmath, amsfonts, amssymb, amsthm, thmtools, wasysym, graphics, graphicx, xcolor, frcursive,comment,bbm}

\usepackage{etex}

\usepackage[a4paper, margin=2.5cm]{geometry}

\usepackage{microtype}

\usepackage[hidelinks]{hyperref}

\usepackage{datetime}

\newdateformat{mydate}{\THEDAY~\monthname[\THEMONTH]~\THEYEAR}

\usepackage{calc}

\usepackage{tikz}
\usetikzlibrary{matrix}
\usetikzlibrary{shapes.geometric, positioning, calc, patterns}

\definecolor{darkblue}{rgb}{0.0,0,0.7} 
 
\definecolor{darkred}{rgb}{0.7,0,0} 
\usepackage{hyperref}
\usepackage[all]{xy}
\usepackage[T1]{fontenc}

\usepackage{caption,lipsum}
\usepackage{graphicx}                  
\usepackage{pstricks,pst-plot,pst-text,pst-tree,pst-eps,pst-fill,pst-node,pst-math}
\usepackage{setspace}
\usepackage{multicol}
\usepackage{tikz-cd}
\usepackage{booktabs}

\usetikzlibrary{positioning}

\newcommand{\darkred}{\color{darkred}} 
\newcommand{\defn}[1]{\emph{\darkred #1}}

\usetikzlibrary{arrows.meta}

\def\M{{\mathsf{AM}}}

\usepackage{etex}

\newtheorem{theorem}{Theorem}[section]
\newtheorem{prop}[theorem]{Proposition}
\newtheorem{lemma}[theorem]{Lemma}
\newtheorem{cor}[theorem]{Corollary}
\newtheorem{conjecture}[theorem]{Conjecture}

\theoremstyle{definition}
\newtheorem{definition}[theorem]{Definition}
\newtheorem{rmq}[theorem]{Remark}
\newtheorem{exple}[theorem]{Example}

\newtheorem{question}[theorem]{Question}

\numberwithin{equation}{section}

\title[Absolute moves spaces and noncrossing partition posets]{Absolute moved spaces and noncrossing partition posets in arbitrary Coxeter groups}

\author{Thomas Gobet}
\address{Université Clermont Auvergne, LMBP, UMR 6620 (CNRS), Campus des Cézeaux, 3 place Vasarely, TSA 60026, CS 60026, 63178 Aubière cedex, France}

\begin{document}
	\maketitle
	
	\begin{abstract}
		The interval $[1,c]_T$ between the identity element and a Coxeter element $c$ in the absolute order on a Coxeter group $W$ is a generalization of the poset of noncrossing partitions arising when $W$ is the symmetric group. When $W$ is finite, this poset is always a lattice, and it is natural to associate to every element $w\in [1,c]_T$ its \textit{moved space} $\mathsf{Mov}(w)=\mathrm{Im}(w - \mathrm{Id}_V)$ in the geometric representation $V$ of $W$. It has dimension equal to the reflection length $\ell_T(w)$ of $w$, and gives a realization of $[1,c]_T$ inside the lattice of subspaces of $V$. It is an important tool in the study of $[1,c]_T$. When $W$ is infinite, the moved space of an element $w\in [1,c]_T$ no longer has dimension $\ell_T(w)$ in general, and distinct elements may have the same moved space. 
		
		We propose a replacement for the moved space of an element $w\in [1,c]_T$ in an arbitrary Coxeter group, that we call \textit{absolute moved space} of $w$. This subspace $\M(w)$ of $V$ always contains $\mathsf{Mov}(w)$ and has dimension equal to $\ell_T(w)$, and distinct elements have distinct absolute moved spaces. This allows us to derive several properties of noncrossing partition posets that hold in full generality, and to show that the natural map from $[1,c]_T$ to reflection subgroups of $W$, which to $w\in [1,c]_T$ associates the subgroup $P(w)$ generated by reflections lying below $w$ in the absolute order, is always injective. Among others, we also derive a new proof of the lattice property of $[1,c]_T$ when $W$ has rank three, and exhibit infinitely many new examples of infinite Coxeter groups of rank four and choices of Coxeter elements for which $[1,c]_T$ fails to be a lattice.    
	\end{abstract}
	
	\tableofcontents
	
	\section{Introduction}
	
	The lattice of noncrossing partitions of $\{1, 2, \dots, n\}$ admits a group-theoretic realization inside the symmetric group $\mathfrak{S}_n$, as the poset $[1,c]_T$ between the identity element and the $n$-cycle $c=(1 \ 2 \ \cdots \ n)$ in the absolute order~\cite{Biane}. This order is defined using the set $T$ of all transpositions. This allows one to generalize noncrossing partition posets to an arbitrary Coxeter group $W$, by taking for $c$ a Coxeter element and for $T$ the set of reflections: it is defined as the interval $[1,c]_T$ in the absolute order $\leq_T$ on $W$, which is defined by $$u \leq_T v \Leftrightarrow \ell_T(u) + \ell_T(u^{-1} v) = \ell_T(v),$$ where $\ell_T$ denotes the length function on $W$ with respect to $T$.  
	
	This generalization of the noncrossing partition lattice turned out to be of particular interest in the study of Artin groups, as initiated by Bessis~\cite{Dual} and Brady and Watt~\cite{Brady, BW_geom}. When $W$ is finite, all Coxeter elements are conjugate, hence different choices of Coxeter elements yield isomorphic posets since the set $T$ is stable by conjugation. Morever, it is always a lattice in this case. One can associate to such a poset a so-called \textit{interval group}, the dual Artin group $B_c^*$ which is isomorphic to the usual Artin group $B_W$ of $W$ in several cases, including when $W$ is finite or affine (and conjecturally in general). The lattice property guarantees that $B_c^*$ admits a so-called \textit{(quasi-)Garside structure}; this means that $B_c^*$ is the group of fractions of a monoid $M_c^*$, called \textit{dual braid monoid}, with enough divisibility properties to ensure that most combinatorial questions on $B_c^*$ can be solved: this includes the word and conjugacy problems, the absence of torsion, the determination of the center, and so on--see~\cite{Garside} for more on Garside theory. It is also central for the $K(\pi, 1)$-conjecture~\cite{Brady, BW_geom, PS, MS}.
	
	In the case where $W$ is finite, all the above questions can be solved also with a so-called \textit{classical} approach, taking for generating set the set $S$ of simple reflections instead of the set $T$. This indeed also yields a Garside structure. But when $W$ is infinite, the attached Artin group cannot be the group of fractions of the Artin monoid (because there are pairs of elements failing to have common multiples). One of the advantages of the "dual" approach is that the monoid $M_c^*$ and the group $B_c^*$ can be defined for arbitrary $W$ and choice $c\in W$ of Coxeter element, not necessarily finite, and even if asking for $B_c^*$ to be Garside is too much to expect, it is still expected that $B_c^*$ is the group of fractions of $M_c^*$. Unfortunately, in general, one may have several nonisomorphic noncrossing partition posets yielding several nonisomorphic monoids, and the lattice property fails in general. Outside spherical type, the dual Artin group $B_c^*$ was shown to be isomorphic to $B_W$ for Artin groups of affine type~\cite{Dig1, Dig2, MS, PS}, of universal type~\cite{Bessis_free} (i.e., free groups), for Artin-groups of rank three~\cite{DPS}, and recently for all Artin groups of extra-large type~\cite{Obrien}. In the affine types, the poset $[1,c]_T$ fails to be a lattice in several cases~\cite{McCammond}, while in universal~\cite{Bessis_free} and rank three cases~\cite{DPS, Gobet_max_dih} it is always a lattice. In other types, the situation is completely open. One of the main reasons for this is that the poset $[1,c]_T$ remains quite mysterious in general. 
	
	\medskip
	
	The goal of this paper is to introduce tools in order to initiate a study of the posets $[1,c]_T$ for general Coxeter groups. The main object we introduce is a generalization of the so-called \textit{moved space} $\mathsf{Mov}(w)$ of an element $w\in W$, that we call \textit{absolute moved space} and denote $\mathsf{AM}(w)$. It is a subspace of the geometric representation $V$ of $W$, and coincides with the moved space when $W$ is finite (but not in general). It satisfies $\dim(\mathsf{AM}(w)) = \ell_T(w)$. Moreover, if $x\leq_T c$ and $y\in W$ is the Kreweras complement of $x$, that is, if $xy = c$, then $\M(x) \oplus \M(y) = V = \M(c)$. 
	
	Let $x\in [1,c]_T = \{ w\in W \mid w \leq_T c\}$. Let $T(x) = \{ t\in T \mid t\leq_T x\}$. Let $P(x)$ denote the subgroup of $W$ generated by $T(x)$. It is a reflection subgroup of $W$, that is, a subgroup generated by reflections. It is parabolic when $W$ is finite, but not in general (see Section~\ref{prelim} below for more details). Let $\mathrm{Sub}(V)$ denote the poset of subspaces of $V$, partially ordered by inclusion. Let $\mathrm{RS}(W)$ denote the set of reflection subgroups of $W$, partially ordered by inclusion. Let $\mathcal{P}(T)$ denote the set of subsets of $T$, partially ordered by inclusion. The first main result is a generalization to all Coxeter groups and Coxeter elements $c$ of a result known to hold when $W$ is finite (which is a corollary of the main result of~\cite{BW_ortho}; see Theorem~\ref{thm_finite} below or~\cite[Section 2.4]{Gobet_sortable}). It claims that several maps from $[1,c]_T$ are injective and order-preserving: 
	
	\begin{theorem}[Theorems~\ref{thm_inj_subspaces}, ~\ref{poset_2},~\ref{poset_3}]\label{thm_mapinj}
	Let $(W,S)$ be a Coxeter system and $c\in W$ a Coxeter element. The three following maps are order-preserving and injective: 
	\begin{enumerate}
		\item The map $ \mathsf{AM}: [1,c]_T \longrightarrow \mathrm{Sub}(V), \ x \mapsto \mathsf{AM}(x)$,
		\item The map $P: [1, c]_T \longrightarrow \mathrm{RS}(W), \ x\mapsto P(x),$
		\item The map $T: [1,c]_T \longrightarrow \mathcal{P}(T), \ x \mapsto T(x).$
 	\end{enumerate} 
	\end{theorem}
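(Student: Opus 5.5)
We first need a working definition of $\M$, since the statement only announces it. For $x\in[1,c]_T$ with $\ell_T(x)=k$, pick a reduced $T$-decomposition $x=t_1t_2\cdots t_k$ and set $\M(x)=\mathrm{span}(\alpha_{t_1},\dots,\alpha_{t_k})$, where $\alpha_t$ is the root of $t$. The plan has four steps: (a) show this is well defined; (b) show it has dimension $\ell_T(x)$; (c) prove order preservation and injectivity of $\M$; (d) deduce the statements for $T$ and $P$.

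For (a), Hurwitz moves act transitively on reduced $T$-decompositions of elements below a Coxeter element. This is the result of Igusa--Schiffler and Baumeister--Dyer--Stump--Wegener. A Hurwitz move replaces $(t_i,t_{i+1})$ by $(t_it_{i+1}t_i,t_i)$. Since $\alpha_{t_it_{i+1}t_i}=t_i(\alpha_{t_{i+1}})\in\mathrm{span}(\alpha_{t_i},\alpha_{t_{i+1}})$, the span is invariant, so $\M(x)$ is well defined.

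For (b), extend to a reduced decomposition $c=t_1\cdots t_n$. By Hurwitz transitivity this is Hurwitz-equivalent to $s_1\cdots s_n$, whose roots form a basis of $V$. Spans are Hurwitz-invariant, so the $n$ roots $\alpha_{t_i}$ span $V$ and are therefore linearly independent. Any subfamily is then independent, which gives $\dim\M(x)=\ell_T(x)$. The same argument gives $\M(x)\oplus\M(y)=V$ when $xy=c$.

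For (c), order preservation is immediate. If $u\le_T x\le_T c$, take $u=t_1\cdots t_j$ reduced and complete it to a reduced decomposition of $x$; then $\M(u)\subseteq\M(x)$.

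Injectivity is the main obstacle. Suppose $\M(x)=\M(x')$. Choose $y$ with $xy=c$ and a reduced word for $y$, and concatenate it with a reduced word $x'=t'_1\cdots t'_k$. The product $x'y$ has length at most $n$. Its roots are $\alpha_{t'_i}$ and $\alpha_{u_j}$ (the roots of the letters of $y$), and these span $\M(x')+\M(y)=\M(x)+\M(y)=V$. So the reflection length of $x'y$ should be $n$. The goal is then to conclude $x'y=c$, hence $x'=x$.

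What is needed is a characterization of $c$: if $w=t_1\cdots t_n$ and the roots span $V$, then $w=c$ provided in addition $w\le_T$ something, or via comparing with $\mathsf{Mov}$. I would try the following route. For $w=t_1\cdots t_m$ one has $\mathrm{Im}(w-1)\subseteq\mathrm{span}(\alpha_{t_i})$. Using the decomposition of $c$, one gets $c-1$ restricted appropriately. Concretely, write $x=t_1\cdots t_k$ and compute $x^{-1}x'$ as a product of reflections whose roots lie in $\M(x)$. Then show that an element below $c$ in the parabolic closure determined by the subspace $\M(x)$ is determined by that subspace: restricted to $\M(x)$, the bilinear form relates $x$ to a "Coxeter element" of the reflection subgroup generated by reflections with roots in $\M(x)$.

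So the precise plan is to prove $x=$ the unique element of $[1,c]_T$ whose roots lie in $\M(x)$ with length $\dim\M(x)$. I would argue this by induction on $\ell_T(c)-\ell_T(x)$, using the Kreweras complement and the direct sum decomposition $\M(x)\oplus\M(y)=V$ to pin down $y$, hence $x$, via $x=cy^{-1}$. This is the step where I am least sure of the argument, and it may require the explicit linear-algebra description of $c$ through the Euler/Coxeter form (e.g. $c-1$ is invertible with inverse expressed via the bilinear form), which determines $x$ from $\M(x)$ as the "projection" of $c$.

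For (d), both remaining maps are order-preserving by transitivity of $\le_T$. Injectivity of $T$ follows from (c): $\M(x)$ is spanned by the roots of the $t_i$, and each $t_i\in T(x)$, so $\M(x)=\mathrm{span}\{\alpha_t : t\in T(x)\}$, and $T(x)=T(x')$ forces $\M(x)=\M(x')$. For $P$, $P(x)$ determines its set of reflections, and the roots of those reflections span the same space as $\{\alpha_t: t\in T(x)\}$, because roots of a reflection subgroup lie in the span of its generating roots. Hence $P(x)$ also determines $\M(x)$.
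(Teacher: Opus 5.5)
There are two genuine gaps. Both sit exactly where the Euler form $\varphi_c$ is needed, and your proposal never uses it.

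\textbf{Step (a) rests on an unproved claim.} Hurwitz transitivity is known for $T$-reduced decompositions of $c$ itself (Igusa--Schiffler) and of parabolic Coxeter elements (Baumeister--Dyer--Stump--Wegener). An element $x\in[1,c]_T$ need not be a parabolic Coxeter element. For instance, $x=s(tut)$ in $\widetilde{A}_2$ has infinite order, whereas every proper parabolic subgroup there is finite. Transitivity on reduced decompositions of a general $x\in[1,c]_T$ is an open question.

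As a result, your $\M(x)$ is not known to be well defined. The equality $\M(x)=\mathrm{span}\{\alpha_t : t\in T(x)\}$ used in (d) is also unproved: you only get one inclusion, since the other needs every $t\leq_T x$ to have its root in the span of a \emph{fixed} decomposition.

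The repair uses transitivity only for $c$, to transport triangularity of $\varphi_c$. For every reduced $c=t_1\cdots t_n$ one has $\varphi_c(\alpha_{t_i},\alpha_{t_j})=0$ for $i<j$ and $=1$ for $i=j$. Fix a reduced word $u_{k+1}\cdots u_n$ of $y=x^{-1}c$. Then the roots of any reduced decomposition of $x$, in particular one beginning with a given $t\leq_T x$, lie in $\bigcap_j\ker\varphi_c(-,\alpha_{u_j})$. This intersection has dimension exactly $k$, because those linear forms are triangular and hence independent. This gives well-definedness, the description via $T(x)$, and $\dim\M(x)=\ell_T(x)$ at once.

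\textbf{You have no argument for injectivity.} The route through $x'y$ cannot work. Knowing that $x'y$ is a product of $n$ reflections whose roots span $V$, or even that $\ell_T(x'y)=n$, says nothing about whether $x'y=c$. Many elements have these properties, such as other Coxeter elements and conjugates of $c$.

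The missing idea is that $\M(x)$ determines the absolute moved space of its Kreweras complement. By the same triangularity, $\M(x^{-1}c)=\{v\in V \mid \varphi_c(u,v)=0 \ \forall u\in\M(x)\}$. So $\M(x)=\M(x')$ gives $\M(y)=\M(y')$ for $y'=x'^{-1}c$.

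Then the half-argument you sketched closes immediately. The element $x^{-1}x'=yy'^{-1}$ is a product of reflections with roots in $\M(x)$, and also a product of reflections with roots in $\M(y)$. Hence $\mathrm{Im}(x^{-1}x'-\mathrm{Id}_V)\subseteq\M(x)\cap\M(y)=\{0\}$, and $x=x'$. Without $\M(y)=\M(y')$ you only get $\mathrm{Im}(yy'^{-1}-\mathrm{Id}_V)\subseteq\M(y)+\M(y')$, which may meet $\M(x)$ nontrivially.

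Once (a) and (c) are fixed this way, your step (d) is fine. Roots of reflections in $\langle A\rangle$ lie in the span of the roots of $A$, since $\mathrm{Im}(a_1\cdots a_m-\mathrm{Id}_V)\subseteq\sum_i\mathbb{R}\alpha_{a_i}$.
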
  
	
	When $W$ is finite, we have a stronger property: all these maps are isomorphisms of posets onto their images. This fails for the first map in general (see~Example~\ref{ex:fail_poset} below for a counterexample in rank four), and we conjecture it to hold for the last two (see Conjecture~\ref{conj_isom_3maps} below). It holds for the three maps for Coxeter groups of rank three: 
	
	\begin{prop}[{Corollary~\ref{isom_rank_three}}]\label{prop_3_maps}
		Let $(W,S)$ be a Coxeter system of rank three and $c\in W$ a Coxeter element. The three maps from Theorem~\ref{thm_mapinj} are isomorphisms of posets onto their images. 	
	\end{prop}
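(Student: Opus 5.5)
Since Theorem~\ref{thm_mapinj} already gives that the three maps are injective and order-preserving, only the reverse implications are needed. For $x,y\in[1,c]_T$ we must show that each of
$$\M(x)\subseteq \M(y),\qquad P(x)\subseteq P(y),\qquad T(x)\subseteq T(y)$$
implies $x\leq_T y$. The plan is to organize the argument as a chain of implications: $\M(x)\subseteq\M(y)$ implies $T(x)\subseteq T(y)$, the inclusion $P(x)\subseteq P(y)$ also implies $T(x)\subseteq T(y)$, and finally $T(x)\subseteq T(y)$ implies $x\leq_T y$. For the first link, I would use that for $t\in T(x)$ one has $\M(t)\subseteq\M(x)$, the line spanned by the root $\alpha_t$. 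I would then show that in $[1,c]_T$ a reflection $t$ satisfies $t\leq_T y$ if and only if $\alpha_t\in\M(y)$. The forward direction is order preservation. For the converse I would argue through the Kreweras complement: $\M(y)\oplus\M(y^{-1}c)=V$, and $t\leq_T c$ gives a factorization of $c$ through $t$.

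For the second link, $T(x)\subseteq P(x)\subseteq P(y)$. Each $t\in T(x)$ is then a reflection in $P(y)$ and lies below $c$, so I need the statement that $T\cap P(y)\cap[1,c]_T = T(y)$. I would prove this using Dyer's theory of reflection subgroups. Since $y\leq_T c$, one expects $y$ to be a Coxeter element of $P(y)$ with respect to its canonical simple system, of rank $\ell_T(y)$. A reflection of $P(y)$ has its root in the span of the simple roots of $P(y)$, and this span should be exactly $\M(y)$, which has the correct dimension $\ell_T(y)$. Combining this with the previous step gives $t\leq_T y$.

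The last link, $T(x)\subseteq T(y)\Rightarrow x\leq_T y$, is the crux. In rank three, $\ell_T$ takes only the values $0,1,2,3$ on $[1,c]_T$, so a case analysis suffices. If $x=1$ or $y=c$ the claim is trivial. If $\ell_T(x)=1$, then $x\in T(y)$. If $\ell_T(x)=2$ and $\ell_T(y)=2$, the dimension count on $\M$ applies: $x=t_1t_2$ with $t_1,t_2\in T(y)$, so $\M(x)=\langle\alpha_{t_1},\alpha_{t_2}\rangle\subseteq\M(y)$, both are planes and hence equal, and injectivity of $\M$ gives $x=y$. If $\ell_T(x)>\ell_T(y)$, the inclusion $\M(x)\subseteq\M(y)$ is impossible by dimension. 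The main obstacle is verifying $\M(x)=\langle\alpha_{t_1},\alpha_{t_2}\rangle$ for a reduced $T$-factorization. This requires the defining properties of $\M$, presumably that it is the sum of the root lines along any reduced $T$-decomposition of $x$ inside $[1,c]_T$, which I expect was established earlier in the paper. The same reduction then handles the $\M$-map directly: $\M(x)\subseteq\M(y)$ forces $x\leq_T y$ by the dimension argument in rank three.
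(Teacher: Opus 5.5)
Your third link is correct. It even gives a quick, self-contained proof that $x\mapsto T(x)$ is an isomorphism onto its image in rank three, and the fact you flag as the main obstacle is just Proposition~\ref{lem_wm_basic}. The gap is in your first link, on which your treatment of both $\M$ and $P$ rests: the claim that for $t\leq_T c$ and $y\in[1,c]_T$, $\alpha_t\in\M(y)$ implies $t\leq_T y$. As you state it, for arbitrary $[1,c]_T$, this is false. In Example~\ref{ex:fail_poset} (type $\widetilde{A_3}$) one has $t=s_2s_3s_2\leq_T c$ and $\alpha_t\in\M(x)$, but $t\not\leq_T x$. So no argument that ignores the rank hypothesis can work, and the Kreweras complement in fact only reformulates the question. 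Take $\ell_T(y)=2$ and $r=y^{-1}c$. By the description of $\M$ via $\varphi_c$ (Propositions~\ref{lem_wm_basic} and~\ref{prop:ortho_y}), $\alpha_t\in\M(y)$ is equivalent to $\varphi_c(\alpha_t,\alpha_r)=0$, i.e.\ to $\alpha_r\in\M(tc)$, while $t\leq_T y$ is equivalent to $r\leq_T tc$. That is the same question again. For the same reason your closing sentence is wrong: when $\ell_T(x)=1$ and $\ell_T(y)=2$, the inclusion $\M(x)\subseteq\M(y)$ is not decided by dimensions. For $T$ this case is trivial because $x\in T(x)$, but for $\M$ and $P$ it is the entire content of the statement.

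The missing ingredient is the dihedral argument of Proposition~\ref{prop_poset_rk3}. In rank three the only nontrivial case is $y=t_1t_2$ with $\ell_T(y)=2$. By \cite[Remark 3.2]{Dyer_Bruhat}, $\Phi\cap(\mathbb{R}\alpha_{t_1}\oplus\mathbb{R}\alpha_{t_2})$ is the root set of a dihedral reflection subgroup. So $\alpha_t\in\M(y)$ puts $t,t_1,t_2$ in a common dihedral reflection subgroup, hence $t'=tt_1t_2$ is a reflection, and $y=tt'$ gives $t\leq_T y$. Compare Lemma~\ref{lemm_p_prop}(4), which gives $P(y)\cap T=T(y)$ directly in this case. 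With this inserted, your scheme closes.

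For the second link, do not appeal to $y$ being a Coxeter element of $P(y)$: that is Question~\ref{conj_cox_el}, open in general. All you actually use is that every reflection of $P(y)$ has its root in $\M(y)$, which follows from \cite[Corollary 3.11]{Dyer_subgroups} exactly as in the proof of Theorem~\ref{poset_2}.

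Note also that the paper runs the implications the other way. It proves that $x\mapsto\M(x)$ is an isomorphism onto its image, then deduces the statements for $P$ and $T$ from $T(x)\subseteq T(y)\Rightarrow P(x)\subseteq P(y)\Rightarrow\M(x)\subseteq\M(y)$ (Lemma~\ref{prop_link_maps}). This way it never needs to determine which reflections of $P(y)$ lie below $y$.
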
	
	
	The following is another general property of noncrossing partition posets (see Proposition~\ref{prop_sets} below for a more precise statement): 
	
	\begin{prop}[{Proposition~\ref{prop_sets}}]\label{prop_set_refn}
	Let $(W,S)$ be a Coxeter system and $c\in W$ a Coxeter element. Let $x\in [1,c]_T$ and let $t_1 t_2 \cdots t_k$ be a $T$-reduced expression of $x$. If $y\in [1,c]_T$ admits a $T$-reduced expression whose letters are exactly the $t_i$'s, then $x=y$.   
	\end{prop}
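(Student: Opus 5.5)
The plan is to deduce the statement from the injectivity of the map $T$ in Theorem~\ref{thm_mapinj}, or more directly from the properties of absolute moved spaces. Let $x = t_1 t_2\cdots t_k$ be $T$-reduced, and suppose $y = t_{\sigma(1)}\cdots t_{\sigma(m)}$ is a $T$-reduced expression whose set of letters is exactly $\{t_1,\dots,t_k\}$. I would first reduce to comparing the reflection subgroups: $P(x)$ contains all $t_i$, since each letter of a $T$-reduced expression of $x\in[1,c]_T$ lies below $x$ in $\leq_T$ (a standard property of the absolute order, using conjugation-stability of $T$ to move any letter to the front). Likewise each $t_i$ lies below $y$. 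So $\langle t_1,\dots,t_k\rangle \subseteq P(x)\cap P(y)$.

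The key step is to upgrade this to $P(x)=P(y)=\langle t_1,\dots,t_k\rangle$, or at least to an equality of absolute moved spaces. I would use the description of $\M(w)$ for $w\in[1,c]_T$ given later in the paper. I expect it to be the span of the roots $\alpha_{t}$ over the letters $t$ of any $T$-reduced expression of $w$, which has dimension $\ell_T(w)$, so those roots are linearly independent. Under this description, $\M(x)=\mathrm{span}(\alpha_{t_1},\dots,\alpha_{t_k})$ and $\M(y)$ is the span of the roots of the letters of the chosen expression of $y$. These are the same set of roots, so $\M(x)=\M(y)$.

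Injectivity of $\M$ on $[1,c]_T$ (part (1) of Theorem~\ref{thm_mapinj}) then gives $x=y$. The linear independence also shows that the letters of $y$ are pairwise distinct, so $m=k$.

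The main obstacle is establishing that $\M(w)$ is the span of the roots of \emph{any} $T$-reduced expression of $w$, independently of the chosen expression. In finite type this is Carter's lemma combined with $\M=\mathsf{Mov}$. In general, $\mathsf{Mov}(w)$ may be too small, so one has to use the paper's definition of $\M$. I would prove this step via the Hurwitz action: the Hurwitz action is transitive on reduced $T$-decompositions of elements of $[1,c]_T$, and a Hurwitz move replaces $(t_i,t_{i+1})$ by $(t_it_{i+1}t_i,t_i)$. The root of $t_it_{i+1}t_i$ is $t_i(\alpha_{t_{i+1}})\in\mathrm{span}(\alpha_{t_i},\alpha_{t_{i+1}})$, so the span of the roots is invariant under Hurwitz moves. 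If the paper's definition of $\M$ is of this form, the statement follows directly from injectivity of $\M$.
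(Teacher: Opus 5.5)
Your overall argument is the paper's own proof. The paper writes $\M(x)=\bigoplus_{i}\mathbb{R}\alpha_{t_i}=\M(y)$ using Proposition~\ref{lem_wm_basic}, then concludes by the injectivity in Theorem~\ref{thm_inj_subspaces}. If you simply cite Proposition~\ref{lem_wm_basic}, you are done. Your handling of a possibly different number $m$ of letters is also fine.

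\textbf{The gap is in how you propose to prove that key step yourself.} Transitivity of the Hurwitz action on $T$-reduced expressions of an arbitrary $x\in[1,c]_T$ is not available. It is known for $c$ itself (Theorem~\ref{thm_hurw_trans}), and for prefixes only in the finite, crystallographic and rank-three cases. In general it is an open question (Remark~\ref{hur_trans_pref}, Question~\ref{conj_hur_trans_prefix}). Likewise $P(x)=\langle t_1,\dots,t_k\rangle$ is only conjectural, so your first route via $P$ is also unavailable.

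With the paper's definition $\M(x)=\sum_{t\leq_T x}\mathbb{R}\alpha_t$, two of the needed facts are fine:
\begin{enumerate}
\item The inclusion $\mathrm{span}(\alpha_{t_1},\dots,\alpha_{t_k})\subseteq\M(x)$ is automatic.
\item Linear independence of the $\alpha_{t_i}$ follows from Hurwitz transitivity for $c$ alone. Complete the expression to a $T$-reduced expression of $c$; Hurwitz moves preserve the span of the $n$ roots, which is $V$ for $(s_1,\dots,s_n)$.
\end{enumerate}

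What is genuinely missing is the upper bound $\dim\M(x)\leq\ell_T(x)$. In other words, you must show that a reflection $t\leq_T x$ outside $\{t_1,\dots,t_k\}$ cannot contribute a new direction. The paper obtains this from the Euler form, as follows.
\begin{enumerate}
\item Fix a $T$-reduced expression $t_{k+1}\cdots t_n$ of $x^{-1}c$.
\item For any $t\leq_T x$, concatenate a $T$-reduced expression of $x$ beginning with $t$ with $t_{k+1}\cdots t_n$. This gives a $T$-reduced expression of $c$, so Lemma~\ref{lem_form_hurwitz} yields $\varphi_c(\alpha_t,\alpha_{t_j})=0$ for all $j>k$.
\item The $n-k$ linear forms $\varphi_c(-,\alpha_{t_j})$, $j>k$, are linearly independent, by the unitriangularity in that lemma.
\end{enumerate}
Hence $\M(x)$ lies in a subspace of dimension $k$. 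This argument uses Hurwitz transitivity only for $c$, which is why it works for arbitrary $(W,S)$, whereas yours would only cover the cases where prefix transitivity is known.
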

	
	The next set of results concerns the lattice property of $[1,c]_T$. As explained above, when $W$ is infinite, the poset $[1,c]_T$ fails to be a lattice in general for some Coxeter groups and choices of Coxeter elements. 
	
	Recently, Delucchi, Paolini and Salvetti~\cite{DPS} proved that when $W$ is of rank $3$ and hyperbolic, the poset $[1,c]_T$ is a lattice for every choice of Coxeter element. Since the spherical and affine cases had been positively settled up before, this shows that $[1,c]_T$ is a lattice in every Coxeter group of rank three and for every choice of Coxeter element. An alternative proof which does not need to distinguish between finite, affine and hyperbolic types was later derived by the author from a more general statement, claiming that an interval $[u,v]_T=\{w\in W \mid u \leq_T w \leq_T v\}$ inside a Coxeter group $W$, with $u \leq_T v$ and $\ell_T(v) = \ell_T(u)+3$, is always a lattice~\cite{Gobet_max_dih}. Below, we derive yet another proof from Theorem~\ref{thm_mapinj}, using a dimension argument as in the proof from~\cite{DPS}, working for every Coxeter system of rank three. It is obtained from a statement holding in an arbitrary noncrossing partition poset $[1,c]_T$, claiming that bowties, which are obstructions to the lattice property, cannot occur in successive ranks (see Proposition~\ref{prop:bow_succ} below and the paragraph before for more details).   
	
	\begin{theorem}[{\cite[Theorem 4.2]{DPS}, \cite[Theorem 2.2]{Gobet_max_dih}, Corollary~\ref{cor_new_three}}]\label{thm_3}
	Let $(W,S)$ be a Coxeter system of rank $3$ and $c\in W$ a Coxeter element. Then $[1,c]_T$ is a lattice. 
	\end{theorem}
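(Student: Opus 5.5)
We would deduce Theorem~\ref{thm_3} from the bowtie criterion together with the injectivity statement of Theorem~\ref{thm_mapinj}, using a dimension argument. First we use the standard reduction for bounded posets. The poset $[1,c]_T$ is graded by $\ell_T$, of rank $\ell_T(c)=3$, and it is finite, since $c$ has finitely many $T$-reduced expressions up to the Hurwitz action and only finitely many reflections lie below $c$. A finite bounded graded poset fails to be a lattice exactly when it contains a \emph{bowtie}. This is a quadruple $x_1,x_2,y_1,y_2$ with $x_i<y_j$ for all $i,j$, the $x_i$ pairwise distinct, the $y_j$ pairwise distinct, and no element $z$ with $x_i\le z\le y_j$ for all $i,j$. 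One can moreover take the bowtie minimal, so that $x_1,x_2$ have the same rank and $y_1,y_2$ have the same rank.

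Since the rank is $3$, the proper part has only ranks $1$ and $2$. A bowtie therefore has $x_1,x_2$ reflections $t_1\neq t_2$ and $y_1,y_2$ of length $2$, that is, it lives in successive ranks. We then invoke Proposition~\ref{prop:bow_succ}, which says that bowties cannot occur in successive ranks in any $[1,c]_T$. This finishes the proof.

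For completeness, here is how we would argue Proposition~\ref{prop:bow_succ} in rank three using $\M$. Take $y_1\neq y_2$ of length $2$, both above two distinct reflections $t_1,t_2$. Monotonicity of $\M$ gives $\M(t_1)+\M(t_2)\subseteq \M(y_1)\cap\M(y_2)$. The spaces $\M(t_i)$ are distinct lines by injectivity, so their sum has dimension $2$. On the other hand, $\dim \M(y_j)=\ell_T(y_j)=2$. Hence $\M(y_1)=\M(t_1)+\M(t_2)=\M(y_2)$, and injectivity of $\M$ (Theorem~\ref{thm_mapinj}(1)) forces $y_1=y_2$, a contradiction.

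The general successive-ranks version follows the same pattern. Let $x_1,x_2$ have length $k$ and $y_1,y_2$ have length $k+1$. Then $\M(x_1)\neq\M(x_2)$ are both of dimension $k$, so their sum has dimension at least $k+1$. This sum is contained in each $\M(y_j)$, which has dimension $k+1$, so the two $\M(y_j)$ coincide, and again injectivity gives $y_1=y_2$.

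The step needing most care is the reduction to a rank-homogeneous bowtie. If the first naive bowtie has $x_i$ of different ranks, we replace elements one at a time. When $x_1<x_2'$ with $x_2'\le$ all the $y_j$ and $x_2'\neq x_2$, we pass to a maximal common lower bound. This is the standard argument that a finite bounded poset in which any two elements having a common upper bound have a join is a lattice. In rank $3$ the argument is almost trivial, because all proper elements are atoms or coatoms. The only subtle input is the dimension identity $\dim\M(w)=\ell_T(w)$ together with injectivity, and both are assumed from Theorem~\ref{thm_mapinj} and the stated properties of $\M$.
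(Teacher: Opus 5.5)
Your proof is the paper's proof. In rank three a bowtie can only sit in ranks $1$ and $2$, and Proposition~\ref{prop:bow_succ} rules that out by a dimension count on $\M$ combined with injectivity. The only difference is that you run the count dually: $\M(x_1)+\M(x_2)$ fills each $\M(y_j)$, whereas the paper intersects the two upper spaces and shows the intersection equals both lower ones. The two versions are equivalent.

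One claim needs correcting. $[1,c]_T$ is not finite in general. For example, if $m_{st}=\infty$ and $c=stu$, then every reflection of the infinite dihedral group $\langle s,t\rangle$ lies below $st\leq_T c$. The bowtie criterion does not need finiteness, though. It only needs the poset to be bounded and of finite length (here every chain has length at most $3$), which guarantees that minimal upper bounds and maximal lower bounds exist. So your reduction goes through unchanged. Your remarks on rank-homogeneous bowties are unnecessary in rank three.
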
        
	
	Outside finite, affine, rank three, and universal Coxeter systems, where we have a full classification of those Coxeter systems and choices of Coxeter elements for which $[1,c]_T$ is a lattice, the question is entirely open. 
	
	Our last result provides a new, infinite family of Coxeter systems of rank four and choices of Coxeter elements for which the lattice property of $[1,c]_T$ fails. As a corollary, if $(W,S)$ is a Coxeter system and $c\in W$ is a Coxeter element, such that $(W,S)$ contains one of these rank four Coxeter systems as a standard parabolic subgroup and the Coxeter element $c$ restricts to one for which the lattice property fails in this parabolic subsystem, then it also fails in $(W,S)$--see Corollary~\ref{cor_fail}. 
	
	\begin{theorem}[Theorem~\ref{thm_fail_lattice}]\label{thm_fail_latt}
	Let $(W,S)$ be a Coxeter system of rank $4$ with a diagram of the form \[\begin{tikzcd}
		s_1 && s_2 \\
		\\
		s_4 && s_3
		\arrow["{2k+1}", no head, from=1-1, to=1-3]
		\arrow["\ell", no head, from=1-3, to=3-3]
		\arrow["\ell", no head, from=3-1, to=1-1]
		\arrow["{2k' +1}", no head, from=3-3, to=3-1]
	\end{tikzcd}\]
	where $\ell \geq 3$, $k,k' \geq 1$. Let $c= s_3 s_1 s_2 s_4$. Then $[1,c]_T$ is not a lattice. 
	\end{theorem}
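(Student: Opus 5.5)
To show that $[1,c]_T$ is not a lattice, I will exhibit a \emph{bowtie}: four elements $x_1,x_2$ of reflection length one and $y_1,y_2$ of reflection length two in $[1,c]_T$ with $x_i\leq_T y_j$ for all $i,j$, and with no element $z\in[1,c]_T$ such that $x_1,x_2\leq_T z\leq_T y_1,y_2$. Since $\ell_T(c)=4$ and the ranks are $1$ and $2$, such a $z$ would have to be $x_i$ or $y_j$. It therefore suffices to check that $x_1\neq x_2$, that $y_1\neq y_2$, and that the four relations hold. Then $x_1,x_2$ have two distinct minimal upper bounds, and $[1,c]_T$ is not a lattice. So the natural candidates are two reflections $t,t'$ together with two distinct length-two elements $y_1,y_2\leq_T c$ that both lie above $t$ and $t'$.

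The construction uses the odd labels. Since $m_{12}=2k+1$ is odd, $s_1$ and $s_2$ are conjugate in $\langle s_1,s_2\rangle$. Similarly $s_3$ and $s_4$ are conjugate in $\langle s_3,s_4\rangle$. I will take $y_1=s_3s_1$, which is a prefix of $c=s_3s_1s_2s_4$, and $y_2$ the complementary-type element obtained by Hurwitz moves. Concretely, I will use the Hurwitz action on the reduced decomposition $(s_3,s_1,s_2,s_4)$ to produce reduced $T$-decompositions of $c$ of the form $(t,t',\ast,\ast)$ and $(t',t,\ast,\ast)$ in two different ways. The most symmetric choice is to look for $t,t'$ with $t\,t'=s_3s_1$ and also $t'\,t''=$ a different product. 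More precisely, I expect the bowtie to consist of $y_1=s_3s_1$ and $y_2$ a conjugate of $s_2s_4$ by an element of the form $(s_1s_2)^k$ or $(s_3s_4)^{k'}$. The two reflections would be $x_1=s_3$ and $x_2=s_1$ conjugated appropriately, so that each lies below both. The odd labels $2k+1$ and $2k'+1$ are what allow a reflection of $\langle s_1,s_2\rangle$ equal to $s_2$ after conjugation to divide $s_3s_1$ as well. Equal labels $\ell$ on the two remaining edges provide the symmetry needed for both length-two elements to contain the same pair of reflections.

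The verification splits into three steps. First, exhibit explicit Hurwitz moves giving $T$-reduced decompositions $c=x_1x_2\,u$ and $c=x_2x_1\,u'$ (or analogous ones), so that $y_1,y_2\leq_T c$ and $x_i\leq_T y_j$. This is a routine computation in dihedral subgroups. Second, show $y_1\neq y_2$. For this I will use Theorem~\ref{thm_mapinj} or compute directly in $V$: $y_1$ and $y_2$ would have different sets $T(y_j)$, or simply different actions on a root. Third, show that the two length-two elements are exactly those above $x_1,x_2$, so that no rank-two join exists. By the argument above this is automatic once the first two steps are done.

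The main obstacle is finding the correct pair of reflections and checking that each indeed lies below both $y_j$ in the absolute order. For $x\leq_T y$ with $\ell_T(y)=2$, this amounts to $\ell_T(x^{-1}y)=1$, i.e.\ $x^{-1}y$ being a reflection. Verifying this requires a root computation using the bilinear form with entries $-\cos(\pi/(2k+1))$, $-\cos(\pi/\ell)$ and so on. I would first work out the smallest case $k=k'=1$, $\ell=3$, as a guide to the bowtie, and then generalize using the conjugations $(s_1s_2)^k$ and $(s_3s_4)^{k'}$, which send $s_1\mapsto s_2$ and $s_4\mapsto s_3$ or the reverse.
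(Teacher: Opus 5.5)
Your plan cannot succeed, because a bowtie formed by two reflections and two elements of reflection length two never exists in any $[1,c]_T$. This is Proposition~\ref{prop:bow_succ} with $k=1$, and the reason is short.

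Suppose $x_1\neq x_2$ are reflections with $x_1,x_2\leq_T y\in[1,c]_T$ and $\ell_T(y)=2$. The roots $\alpha_{x_1},\alpha_{x_2}$ are linearly independent and lie in $\mathsf{AM}(y)$, which has dimension $2$ by Proposition~\ref{lem_wm_basic}. So $\mathsf{AM}(y)=\mathbb{R}\alpha_{x_1}\oplus\mathbb{R}\alpha_{x_2}$ depends only on $\{x_1,x_2\}$. Injectivity of $y\mapsto\mathsf{AM}(y)$ (Theorem~\ref{thm_inj_subspaces}) then forces $y_1=y_2$. Your second step, proving $y_1\neq y_2$, must therefore fail, and the result you intend to invoke for it proves the opposite.

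Your concrete starting point already shows this. The element $y_1=s_3s_1=s_1s_3$ has only $s_1,s_3$ below it. Any $y_2\in[1,c]_T$ above both would satisfy $\mathsf{AM}(y_2)=\mathbb{R}\alpha_{s_1}\oplus\mathbb{R}\alpha_{s_3}=\mathsf{AM}(y_1)$. (In the absolute order on all of $W$, two reflections generating an infinite dihedral group do have infinitely many common upper bounds of length two, namely the nontrivial rotations of that dihedral group. It is the restriction to $[1,c]_T$ that eliminates all but one.)

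The same proposition with $k=2$ excludes ranks $2$ and $3$. So in rank four a bowtie must consist of two reflections and two elements of reflection length three. This is what the paper does, with $t_1=(s_1s_2)^ks_1$, $t_2=(s_3s_4)^{k'}s_3$, $w_1=s_3t_1s_4$ and $w_2=s_1t_2s_2$.

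In that configuration, distinctness plus the four relations $t_i\leq_T w_j$ is no longer sufficient. The real content is proving that no $w=tt'\in[1,c]_T$ lies above both $t_1$ and $t_2$, and your proposal has nothing playing this role. The paper's argument runs as follows:
\begin{enumerate}
\item The reflections $t,t'$ would lie in the maximal dihedral reflection subgroup containing $t_1,t_2$. This subgroup is infinite because $B(\alpha_{t_1},\alpha_{t_2})\leq -1$.
\item One computes $\omega_c(\alpha_{t_1},\alpha_{t_2})=0$. This is exactly where the two equal labels $\ell$ and the choice $c=s_3s_1s_2s_4$ are used.
\item Reading--Speyer's Proposition~4.1 then gives $\omega_c(\alpha_t,\alpha_{t'})=0$.
\item Lemma~\ref{lem_omega_2} turns this into $B(\alpha_t,\alpha_{t'})=0$, so $t$ and $t'$ commute. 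This is impossible for two distinct reflections of an infinite dihedral group.
\end{enumerate}
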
  
	
	If $k=k'=1$ and $\ell=3$, then $W$ is of type $\widetilde{A_3}$, and the failure of the lattice property in this case was observed by Digne~\cite[Proposition 5.5]{Dig1}. Note that the diagrams above cover some hyperbolic Coxeter systems. 
	
	\bigskip
	
	The paper is organized as follows. In Section~\ref{prelim}, we collect some preliminary results on Coxeter groups, reflection subgroups, the absolute order, moved spaces, Coxeter elements. In particular we give in Theorem~\ref{thm_cox_proper} a short proof of the fact that Coxeter elements are essential. We also present a state of the art of some properties of $[1,c]_T$, such as the lattice property or, in the finite case, some relevant realizations of this poset. In some cases we give proofs for the sake of completeness. In Section~\ref{sec:abs}, we define absolute moved spaces and prove some basic properties. In Section~\ref{sec:inj}, we prove Theorem~\ref{thm_mapinj} and give properties of the three maps considered. In the last Section~\ref{sec:applications}, devoted to applications of absolute moved spaces in the study of the poset $[1,c]_T$, we prove Proposition~\ref{prop_set_refn} and Theorems~\ref{thm_3} and~\ref{thm_fail_latt}.  
	
	\section{Preliminaries and motivation}\label{prelim}
	
	\subsection{Coxeter groups and absolute order} Let $(W,S)$ be a Coxeter system. In this paper we shall always assume that $S$ is finite. For background on Coxeter groups we refer the reader to~\cite{Bourbaki, Humphreys, BB}. 
	Let $V$ denote the geometric representation of $W$, which is an $|S|$-dimensional $\mathbb{R}$-vector space endowed with a bilinear, symmetric form $B(-, -)$ called the \defn{Tits form}. For $w\in W$, let $\mathsf{Mov}(w)$ denote $\mathrm{Im}(w - \mathrm{Id}_V)$ and $V^w=\{v\in V \mid w(v)=v\}$. When $W$ is finite, the Tits form is a scalar product, and $V^w$ is the orthgonal subspace of $\mathsf{Mov}(w)$. Let $T:=\bigcup_{w\in W} w S w^{-1}$ be the set of reflections of $W$. We denote by $\Phi = \Phi^+ \sqcup \Phi^-$ the corresponding root system and by $t \leftrightarrow \alpha_t$ the $1$ to $1$ correspondence between reflections and positive roots. For $t\in T$ we sometimes denote its reflecting hyperplane by $H_t$ instead of $V^t$. 
	
	Given a Coxeter system $(W,S)$ and $I \subseteq S$, the pair $(W_I:= \langle I \rangle, I)$ is again a Coxeter system, called a \defn{standard parabolic subgroup} of $W$. A subgroup $P\subseteq W$ is a \defn{parabolic subgroup} if there are $I\subseteq S$ and $w\in W$ such that $P = w W_I w^{-1}$. Parabolic subgroups are stable by intersection (see for instance~\cite[Lemma 2.25]{AB}). A subgroup $W'$ of $W$ is a \defn{reflection subgroup} is there is $A\subseteq T$ such that $W' = \langle A \rangle$. Parabolic subgroups are examples of reflection subgroups. Every reflection subgroup has a canonical structure of Coxeter group $(W',S')$ with $S'\neq A$ in general, see~\cite{Deodhar, Dyer_subgroups}.
	
	A \defn{Coxeter element} in $(W,S)$ is a product of all the elements of $S$ in some order. We call a Coxeter element in $(W_I, I)$ for $I\subseteq S$ a \defn{standard parabolic Coxeter element}.  
	
	The~\defn{reflection length} $\ell_T(w)$ of an element $w\in W$ is the smallest nonnegative integer $k$ for which there exist reflections $t_1, t_2, \dots, t_k\in T$ such that $w=t_1 t_2 \cdots t_k$. Replacing $T$ by $S$ in the previous definition yields the (classical) length function $\ell_S$ on $w$. If $t_1, t_2, \dots, t_k\in T$ and $w= t_1 t_2 \cdots t_k$ with $k=\ell_T(w)$, we say that the word $t_1 t_2 \cdots t_k$ is a \defn{$T$-reduced expression} of $w$ (and an $S$-reduced expression if $T$ is replaced by $S$ everywhere). The \defn{absolute order} $\leq_T$ on $W$ is then defined as $ u \leq_T v$ if and only if $\ell_T(u) + \ell_T(u^{-1} v)= \ell_T(v)$, that is, there is a $T$-reduced expression of $v$ having a $T$-reduced expression of $u$ as a prefix (equivalently as a suffix, since $T$ is stable by conjugation). 
	
	It is natural to seek for interpretations of $\ell_T(w)$ in terms of $V$ or $\Phi$ and algorithms to determine it. In the case where $W$ is finite one has the following geometric interpretation, due to Carter: 
	
	\begin{theorem}[Carter's Lemma, 1972, {\cite[Lemmas 1 - 3]{Carter}}]\label{thm_carter}
		Let $(W,S)$ be a finite Coxeter group. 
		\begin{enumerate}
			\item For all $w\in W$, we have $\ell_T(w)= \dim(V) - \dim(V^w) = \dim(\mathsf{Mov}(w))$. 
			\item For all $t\in T$, $w\in W$, we have $t \leq_T w \Leftrightarrow V^w \subseteq V^t \Leftrightarrow \mathsf{Mov}(t) \subseteq \mathsf{Mov}(w).$
			\item Given $w\in W$ and $t_1, t_2, \dots, t_k\in T$ such that $w= t_1 t_2 \cdots t_k$, we have $\ell_T(w)=k$ if and only if the roots $\{\alpha_{t_i}\}_{i=1}^k$ are linearly independent. 
		\end{enumerate}
	\end{theorem}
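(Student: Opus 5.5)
The natural order is (3), then (1), then (2): the independence criterion, which controls what reduced $T$-words look like, comes first, and the other two items are consequences of it.

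\emph{Preliminary facts.} $W$ is finite, so the Tits form $B$ is positive definite. For every $w\in W$ we then have $V^w=\mathsf{Mov}(w)^\perp$. Indeed, if $v\in V^w$ and $u\in V$, then $B(v,(w-1)u)=B(w^{-1}v,u)-B(v,u)=0$, since $w^{-1}v=v$. This gives $V^w\subseteq \mathsf{Mov}(w)^\perp$, and equality holds because $\dim V^w=\dim\ker(w-1)=\dim V-\dim\mathsf{Mov}(w)$. For a reflection $t$ we have $\mathsf{Mov}(t)=\mathbb{R}\alpha_t$.

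The key identity is that if $w=t_1\cdots t_k$, then
\[
\mathsf{Mov}(w)\subseteq \mathrm{span}(\alpha_{t_1},\dots,\alpha_{t_k}).
\]
This follows by induction from $w-1=t_1(t_2\cdots t_k-1)+(t_1-1)$. Consequently $\dim\mathsf{Mov}(w)\le\ell_T(w)$.

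\emph{Proof of (3) and (1).} The converse bound comes from induction on $k$. Suppose $w=t_1\cdots t_k$ with the $\alpha_{t_i}$ linearly independent, and put $w'=t_2\cdots t_k$. I would show $V^{w}=V^{w'}\cap H_{t_1}$ and that $V^{w'}\not\subseteq H_{t_1}$, so that the dimension drops by exactly one.

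The inclusion "$\supseteq$" is clear. For "$\subseteq$", take $v\in V^w$. Then $t_1v=w'v$, so $v-w'v=v-t_1v\in\mathbb{R}\alpha_{t_1}\cap\mathsf{Mov}(w')$. By the key identity and independence, $\mathsf{Mov}(w')\subseteq\mathrm{span}(\alpha_{t_2},\dots,\alpha_{t_k})$, which meets $\mathbb{R}\alpha_{t_1}$ only in $0$. Hence $w'v=v$ and $t_1v=v$.

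Next, $V^{w'}=\mathsf{Mov}(w')^\perp$, and by induction $\mathsf{Mov}(w')=\mathrm{span}(\alpha_{t_2},\dots,\alpha_{t_k})$, because the dimensions agree. This orthogonal complement is not contained in $\alpha_{t_1}^\perp$, since $\alpha_{t_1}$ does not lie in that span. So $\dim V^w=\dim V^{w'}-1$, which gives $\dim\mathsf{Mov}(w)=k$ and $\mathsf{Mov}(w)=\mathrm{span}(\alpha_{t_i})$.

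Now (1) follows from the existence of words with independent roots. Take any reduced word $w=t_1\cdots t_k$. If its roots were dependent, we could shorten it; here I expect the main obstacle. The plan is to use the standard trick: if $\alpha_{t_j}\in\mathrm{span}(\alpha_{t_{j+1}},\dots,\alpha_{t_k})$ for some $j$ (there is such a $j$, taking it maximal so that the tail is independent), then by the above $\mathsf{Mov}(t_{j+1}\cdots t_k)$ contains $\alpha_{t_j}$. Then $t_j t_{j+1}\cdots t_k$ has moved space of dimension $\le k-j$, and I would show it has reflection length $<k-j+1$ by an independent induction on dimension: an element $u$ with $\dim\mathsf{Mov}(u)=m$ is a product of $m$ reflections.

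That last claim is the genuine content. To prove it, pick a root $\alpha$ with $\alpha\in\mathsf{Mov}(u)$ if possible; in general, take $v$ with $(u-1)v\ne0$ and use that $ru$, for a suitable reflection $r$, has larger fixed space. I would use Carter's original argument: choose $v\notin V^u$ and show some reflection $t$ with $\alpha_t\in\mathsf{Mov}(u)$ and $V^{tu}\supsetneq V^u$ exists (the hardest point, needing finiteness of $W$ via the fact that $W_{V^u}$, the pointwise stabilizer of $V^u$, is a parabolic subgroup generated by the reflections it contains, by Steinberg's theorem). Then induct. With $\ell_T(w)=\dim\mathsf{Mov}(w)$ in hand, the independence in (3) for reduced words follows from the key identity: $k=\dim\mathsf{Mov}\le\dim\mathrm{span}(\alpha_{t_i})\le k$.

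\emph{Proof of (2).} If $t\le_T w$, write $w=t\,t_2\cdots t_k$ reduced. By (3) and the above, $\alpha_t\in\mathsf{Mov}(w)$, so $\mathsf{Mov}(t)\subseteq\mathsf{Mov}(w)$. Conversely, suppose $\alpha_t\in\mathsf{Mov}(w)$. Then $V^w\subseteq H_t$, so $V^{tw}\supseteq V^w$, and $V^{tw}\supsetneq V^w$ because $tw$ fixes some $v$ with $wv=v-c\alpha_t$, $c\ne0$. (This last step needs checking; it follows by dimension counting, since $\mathsf{Mov}(tw)\subseteq\mathsf{Mov}(w)$ and the two cannot be equal, as $\ell_T$ changes parity.) Hence $\ell_T(tw)=\ell_T(w)-1$, i.e. $t\le_T w$. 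The equivalence with $V^w\subseteq V^t$ is given by orthogonal complements.
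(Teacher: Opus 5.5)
The paper gives no proof of this statement; it is quoted from Carter. Your argument is essentially Carter's own, and most of it is correct:
\begin{enumerate}
\item The inclusion $\mathsf{Mov}(t_1\cdots t_k)\subseteq\sum_i\mathbb{R}\alpha_{t_i}$ gives $\dim\mathsf{Mov}(w)\le\ell_T(w)$.
\item The computation $V^{w}=V^{w'}\cap H_{t_1}$ with $V^{w'}\not\subseteq H_{t_1}$ shows that independent roots force $\dim\mathsf{Mov}(w)=k$.
\item The reverse inequality $\ell_T(u)\le\dim\mathsf{Mov}(u)$ is obtained by induction on $\dim\mathsf{Mov}(u)$. Steinberg's theorem supplies a reflection $t$ in the pointwise stabilizer of $V^u$, so that $\alpha_t\in(V^u)^\perp=\mathsf{Mov}(u)$.
\end{enumerate}
Your derivation of (2), and of both directions of (3) once (1) is known, is also correct. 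The detour through shortening a reduced word with dependent roots is unnecessary: $\ell_T(u)\le\dim\mathsf{Mov}(u)$ together with $\dim\mathsf{Mov}(u)\le\ell_T(u)$ is already (1).

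The one step you never prove is the strict inclusion $V^{tu}\supsetneq V^u$ in the inductive step for $\ell_T(u)\le\dim\mathsf{Mov}(u)$. In your treatment of (1) you only announce it. For the same step in (2) you justify it by ``$\ell_T$ changes parity'', but that only says something about $\dim\mathsf{Mov}$ if one already knows $\ell_T=\dim\mathsf{Mov}$. That is fine in (2), where (1) is available, but it is exactly what the induction in (1) is meant to establish, so it cannot be imported there.

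The step is easy to close directly. Since $\alpha_t\in\mathrm{Im}(u-\mathrm{Id}_V)$, pick $v$ with $u(v)=v+\alpha_t$. Then $B(u(v),u(v))=B(v,v)$ gives $2B(v,\alpha_t)=-B(\alpha_t,\alpha_t)$. Hence $t(v)=v+\alpha_t=u(v)$, so $tu(v)=v$, while $u(v)\neq v$. As $V^u\subseteq H_t$, this gives $V^{tu}\supseteq V^u\oplus\mathbb{R}v$. Therefore $\dim\mathsf{Mov}(tu)\le\dim\mathsf{Mov}(u)-1$, which is all the induction needs.

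Alternatively, you can make the parity argument non-circular. For any $B$-orthogonal $u$ one has $\det(u)=(-1)^{\dim\mathsf{Mov}(u)}$, because the eigenvalue $1$ has multiplicity $\dim V^u$ and non-real eigenvalues come in conjugate pairs. Hence $\dim\mathsf{Mov}(tu)\neq\dim\mathsf{Mov}(u)$, and combined with $\mathsf{Mov}(tu)\subseteq\mathsf{Mov}(u)$ this gives the strict drop.

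With this filled in, the whole proof goes through.
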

	
	This theorem does not generalize to arbitrary $W$, where typically the reflection length can be unbounded in general~\cite{Dus}. In affine types one can derive a formula analogous to the one in point $(1)$ (see~\cite{BLMC}). For arbitrary Coxeter systems, we have the following, which allows one to calculate $\ell_T(w)$ for any element $w$ of a Coxeter group: 
	
	\begin{theorem}[Dyer, 2001, {\cite[Theorem 1.1]{Dyer_refn}}]\label{thm_dyer_ref_length} 
		Let $(W,S)$ be an arbitrary Coxeter system. Let $w\in W$. Let $s_1 s_2 \cdots s_k$ be any $S$-reduced expression of $w$. Then $\ell_T(w)$ is equal to the minimal number of letters to delete in the word $s_1 s_2 \cdots s_k$ to get a word representing the identity.  
	\end{theorem}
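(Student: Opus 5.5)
The statement is Dyer's theorem: for an $S$-reduced expression $s_1 s_2\cdots s_k$ of $w$, $\ell_T(w)$ equals the minimal number of letters one must delete to obtain a word representing the identity. Write $m$ for this minimal number. The plan is to prove the two inequalities $\ell_T(w)\le m$ and $\ell_T(w)\ge m$ separately. The first is elementary, and the second carries the real content.

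\emph{Upper bound $\ell_T(w)\le m$.} Suppose deleting the letters in positions $i_1<\cdots<i_m$ yields a word equal to $1$. Deleting the letter $s_i$ from a word $s_1\cdots s_k$ amounts to multiplying on the left by the reflection $t_i=s_1\cdots s_{i-1}s_is_{i-1}\cdots s_1$. Deleting several letters successively therefore gives $1 = t'_{m}\cdots t'_{1} w$ for suitable reflections $t'_j$, obtained by conjugating at each stage inside the current word. Hence $w$ is a product of $m$ reflections.

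\emph{Lower bound $\ell_T(w)\ge m$.} We argue by induction on $r=\ell_T(w)$, showing that any reduced word for $w$ admits a deletion of $r$ letters giving the identity. The case $r=0$ is trivial. For $r\ge 1$, write $w=t\,w'$ with $\ell_T(w')=r-1$ and $t\in T$. The desired step is the following claim: from a reduced word for $w$ one can delete a single letter to obtain a (possibly non-reduced) word for $tw$ or $wt$, up to choosing $t$ suitably.

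In general this claim is false for an arbitrary $t$. Deleting one letter produces $tw$ exactly when $t\in N(w^{-1})$, the left inversion set, in which case $\ell_S(tw)<\ell_S(w)$. When $\ell_S(tw)>\ell_S(w)$ one must instead \emph{insert} a letter. So I would strengthen the induction to non-reduced words via a subword-type statement, which is the route Dyer takes: for any word $\underline{s}$ representing $w$ (not necessarily reduced), the minimal number of deletions to reach $1$ is independent of the word up to parity and bounded by $\ell_T(w)$.

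To carry this out, I would use the following:
\begin{enumerate}
\item Exchange/deletion conditions in the Coxeter group, and the fact that braid moves and $ss$-cancellations do not increase the deletion number of a word.
\item The relation between a word for $w$ and a word for $tw$: inserting the root $\alpha_t$ into the word gives a word for $tw$ one letter longer. Reducing this word by deletions then shows that the deletion numbers $d$ satisfy $d(tw)\ge d(w)-1$.
\item Iterating item 2 along $w=t_1\cdots t_r$ gives $d(w)\le r$.
\end{enumerate}

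The main obstacle is item 1 together with its interaction with item 2. One must show that the deletion number of a reduced word for $w$ is at most the deletion number of any word representing $w$, i.e.\ that passing to a reduced word (deleting pairs via the deletion condition) never increases it. I would try first to prove this by Matsumoto's theorem plus a direct local check on braid relations, working in the dihedral case where everything is explicit. If that check fails, I would fall back on Dyer's root-theoretic approach via reflection cocycles $N(\cdot)$ and the Bruhat graph.
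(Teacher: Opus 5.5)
The paper gives no proof of this statement; it quotes it from Dyer. So the question is whether your argument stands on its own. Your upper bound $\ell_T(w)\le m$ is correct, and it works for any word, reduced or not.

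The lower bound is not established. Your item~1, that a reduced word never has a larger deletion number $d$ than any other word for the same element, is the whole content of the theorem, and you only announce strategies for it. Several surrounding claims are also off:
\begin{itemize}
\item Item~2 is false as stated. Inserting one letter into a fixed word $\underline{s}$ for $w$ yields $xax^{-1}w$ with $a\in S$ and $x$ a prefix of $\underline{s}$, not $tw$ for an arbitrary $t\in T$. From the empty word for $w=1$, for example, one only reaches simple reflections. Also, ``inserting the root $\alpha_t$'' has no meaning.
\item The relevant invariant is not ``independent of the word up to parity'': $d(\underline{s})$ turns out to be independent of $\underline{s}$ altogether.
\item Matsumoto's theorem alone cannot give item~1, since it only connects reduced words.
\item The pair deletion furnished by the deletion condition is non-local. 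An optimal deletion set that removes letters lying between the two cancelled letters does not transfer.
\end{itemize}

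The missing step can be done with adjacent moves only. By the Coxeter presentation, two words represent the same element iff they are related by braid moves and by inserting or deleting a factor $ss$. So it suffices to show that each move preserves $d$.

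\emph{Braid moves.} Consider replacing an alternating factor $sts\cdots$ of length $m=m_{st}<\infty$ by $tst\cdots$. Suppose an optimal deletion set $J$ deletes $p$ letters of the block and keeps a subword with product $z$. Then $\ell_S(z)\le m-p$. A reduced word of $z$ occurs as a subword of $tst\cdots$, since an alternating word of length less than $m$ sits inside any alternating word of length $m$. So $z$ is reached there with $m-\ell_S(z)\le p$ deletions. Thus $d$ does not increase, and by symmetry it is unchanged.

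\emph{Moves $ss$.} Compare $\underline{x}\,ss\,\underline{y}$ with $\underline{x}\,\underline{y}$. The inequality $d(\underline{x}\,ss\,\underline{y})\le d(\underline{x}\,\underline{y})$ is trivial. Conversely, take an optimal $J$ for $\underline{x}\,ss\,\underline{y}$. If it removes both or neither of the two letters $s$, you are done. If it removes exactly one, the kept letters of $\underline{x}$ and $\underline{y}$ have products $x',y'$ with $x'sy'=1$. So the kept part of $\underline{x}\,\underline{y}$ represents the reflection $r=x'y'=x'sx'^{-1}$. The strong exchange condition holds for non-reduced expressions; applied to $\ell_S(r\cdot r)<\ell_S(r)$, it gives one more deletion reaching $1$. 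That is $|J|$ deletions in total.

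Therefore $d$ depends only on $w$. Write $w=t_1\cdots t_r$ with $r=\ell_T(w)$ and $t_i=u_ia_iu_i^{-1}$, $a_i\in S$. The word $\underline{u_1}\,a_1\,\underline{u_1}^{-1}\cdots\underline{u_r}\,a_r\,\underline{u_r}^{-1}$, with $\underline{u_i}^{-1}$ the reversed word, has $d\le r$ by deleting the letters $a_i$. Hence every reduced word of $w$ has $d\le\ell_T(w)$. Your items~2 and~3 and the induction on $r$ then become unnecessary.
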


	\subsection{Moved spaces of elements of finite Coxeter groups}
	When $W$ is finite, one can wonder whether point $(2)$ of Theorem~\ref{thm_carter} stays true if $t$ is replaced by an arbitrary element $u\in W$. Unfortunately, the equivalence does not hold in general--only the direct implication holds. For equivalence one needs a further assumption; we have the following result, which is a corollary of the main result from~\cite{BW_ortho} (see also~\cite[Theorem 2.4.9]{Arm}):  
	
	\begin{prop}\label{prop_bw}
	Let $(W,S)$ be a finite Coxeter group. 
	\begin{enumerate}
		\item If $v, w\in W$, then $ v \leq_T w \Rightarrow V^w \subseteq V^v$ (equivalently $\mathsf{Mov}(v) \subseteq \mathsf{Mov}(w)$)
		\item Let $v, w\in W$. Assume that there is $u\in W$ such that $v, w \leq_T u$. Then $ \Leftrightarrow v \leq_T w \Leftrightarrow V^w \subseteq V^v$ (equivalently $\mathsf{Mov}(v)\subseteq \mathsf{Mov}(w)$). 
	\end{enumerate}
	\end{prop}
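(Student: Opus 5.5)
Part (1) will follow from Carter's Lemma (Theorem~\ref{thm_carter}) together with an additivity statement for moved spaces along reduced factorizations. Suppose $v \leq_T w$ and set $u = v^{-1}w$, so that $w = vu$ with $\ell_T(v)+\ell_T(u)=\ell_T(w)$. For every $x\in V$ we have
\[ (w-\mathrm{Id})(x) = (v-\mathrm{Id})(u(x)) + (u-\mathrm{Id})(x), \]
hence $\mathsf{Mov}(w)\subseteq \mathsf{Mov}(v)+\mathsf{Mov}(u)$. By Carter's Lemma, part (1), the dimensions are $\ell_T(w)$, $\ell_T(v)$ and $\ell_T(u)$. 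Since these add up, the inclusion is forced to be an equality and the sum is direct: $\mathsf{Mov}(w)=\mathsf{Mov}(v)\oplus\mathsf{Mov}(u)$. In particular $\mathsf{Mov}(v)\subseteq\mathsf{Mov}(w)$. Because $W$ is finite the Tits form is a $W$-invariant scalar product, and $V^w=\mathsf{Mov}(w)^\perp$ (if $x\in V^w$ then $B((w-1)y,x)=B(y,(w^{-1}-1)x)=0$, and then compare dimensions). The two formulations are therefore equivalent.

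For part (2), one direction is already part (1). For the converse, assume $v,w\leq_T u$ and $\mathsf{Mov}(v)\subseteq\mathsf{Mov}(w)$; we need $v\leq_T w$. The plan is to compute $\mathsf{Mov}(v^{-1}w)$ and show that its dimension is $\ell_T(w)-\ell_T(v)$. We write $u=w w'$ with $w'=w^{-1}u$, so that by part (1) $\mathsf{Mov}(u)=\mathsf{Mov}(w)\oplus\mathsf{Mov}(w')$, and likewise $\mathsf{Mov}(u)=\mathsf{Mov}(v)\oplus\mathsf{Mov}(v')$ with $v'=v^{-1}u$. The key tool will be the following fact: if $u=ab$ with $\ell_T$ additive, then $a$ acts on $\mathsf{Mov}(u)$ in a controlled way, namely $a$ restricted to $\mathsf{Mov}(u)$ is the "parabolic" factor fixing $\mathsf{Mov}(b)^{\perp}\cap\mathsf{Mov}(u)$. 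This is the Brady--Watt picture, in which $w\mapsto \mathsf{Mov}(w)$ together with the orthogonal form determines $w$ on $[1,u]_T$.

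Concretely, I would first prove injectivity: two elements $v_1,v_2\leq_T u$ with the same moved space coincide. Then I would reduce (2) to this. Set $x=v^{-1}w$. Using $x=v^{-1}u\cdot u^{-1}w$ and the direct sum decompositions, one shows $\mathsf{Mov}(x)\subseteq \mathsf{Mov}(w)\cap\mathsf{Mov}(v)^{\perp_u}$, where $\perp_u$ is taken with respect to the nondegenerate (non-symmetric) form $\langle y,z\rangle_u=B((u-1)^{-1}y,z)$ on $\mathsf{Mov}(u)$ that Brady--Watt use. Indeed, with respect to this form a reduced factorization $u=ab$ corresponds to $\mathsf{Mov}(b)$ being the left orthogonal of $\mathsf{Mov}(a)$ inside $\mathsf{Mov}(u)$. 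That intersection has dimension $\ell_T(w)-\ell_T(v)$, so $\ell_T(x)\leq \ell_T(w)-\ell_T(v)$ by Carter. The triangle inequality gives the reverse inequality, hence $v\leq_T w$.

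The main obstacle is establishing this orthogonality characterization, i.e.\ the Brady--Watt theorem that $\mathsf{Mov}$ is determined by, and determines, prefixes of $u$ via the form $\langle\cdot,\cdot\rangle_u$. Since the paper presents this proposition as a corollary of~\cite{BW_ortho}, I would rather invoke that main result than reprove it. If a self-contained route were needed, I would first try induction on $\ell_T(u)$, peeling off one reflection at a time using Carter's Lemma (2) and (3).
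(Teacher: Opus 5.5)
Your proof is correct, and it differs from the paper's in both parts.

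\textbf{Part (1).} The paper reads off $V^w=\bigcap_j H_{t_j}\subseteq\bigcap_{j\le i}H_{t_j}=V^v$ from Carter's Lemma. You instead use the identity $w-\mathrm{Id}_V=(v-\mathrm{Id}_V)(v^{-1}w)+(v^{-1}w-\mathrm{Id}_V)$ together with a dimension count. This gives the stronger statement $\mathsf{Mov}(w)=\mathsf{Mov}(v)\oplus\mathsf{Mov}(v^{-1}w)$, which your part (2) needs.

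\textbf{Part (2).} The paper quotes \cite[Theorem 2]{BW_ortho} as a black box: the interval below $u$ in the Brady--Watt order on $\mathrm{O}(V)$ is isomorphic to a subspace lattice via $\mathsf{Mov}$. It then translates back with Carter. You instead bound $\dim\mathsf{Mov}(v^{-1}w)$ directly, using the form $\langle y,z\rangle_u=B((u-\mathrm{Id}_V)^{-1}y,z)$ on $\mathsf{Mov}(u)$.

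Your route is longer, but it can be made self-contained. The orthogonality you single out as the main obstacle follows at once from your part (1). Suppose $u=ab$ with $\ell_T(u)=\ell_T(a)+\ell_T(b)$, take $y\in\mathsf{Mov}(a)$, and set $q=(u-\mathrm{Id}_V)^{-1}y\in\mathsf{Mov}(u)$. Then $y=(a-\mathrm{Id}_V)(bq)+(b-\mathrm{Id}_V)q$. Since the sum $\mathsf{Mov}(a)\oplus\mathsf{Mov}(b)$ is direct, $q\in V^b=\mathsf{Mov}(b)^\perp$, hence $\langle\mathsf{Mov}(a),\mathsf{Mov}(b)\rangle_u=0$. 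This is the finite-type counterpart of the paper's Proposition~\ref{prop:ortho_y}: for $u=c$ one computes $B((c-\mathrm{Id}_V)^{-1}x,y)=-\tfrac12\varphi_c(x,y)$.

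\textbf{Points to tighten.}

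First, the claim that $\mathsf{Mov}(w)\cap\mathsf{Mov}(v)^{\perp_u}$ has dimension $\ell_T(w)-\ell_T(v)$ is the crux, and a naive count only gives $\geq$. Take $\perp_u$ in the second slot, to match $\langle\mathsf{Mov}(a),\mathsf{Mov}(b)\rangle_u=0$. Then show that $\langle\cdot,\cdot\rangle_u$ restricted to $\mathsf{Mov}(w)$ is nondegenerate, which follows from three facts:
the form is nondegenerate on $\mathsf{Mov}(u)$, because $\mathsf{Mov}(u)\cap\mathsf{Mov}(u)^\perp=0$;
$\mathsf{Mov}(u)=\mathsf{Mov}(w)\oplus\mathsf{Mov}(w^{-1}u)$;
$\langle\mathsf{Mov}(w),\mathsf{Mov}(w^{-1}u)\rangle_u=0$.
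The inclusion $\mathsf{Mov}(v^{-1}w)\subseteq\mathsf{Mov}(w)\cap\mathsf{Mov}(v)^{\perp_u}$ then needs only $\mathsf{Mov}(g^{-1})=\mathsf{Mov}(g)$, the hypothesis $\mathsf{Mov}(v)\subseteq\mathsf{Mov}(w)$, and the inclusions $\mathsf{Mov}(v^{-1}u)\subseteq\mathsf{Mov}(v)^{\perp_u}$ and $\mathsf{Mov}(w^{-1}u)\subseteq\mathsf{Mov}(w)^{\perp_u}\subseteq\mathsf{Mov}(v)^{\perp_u}$.

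Second, your heuristic that $a$ fixes $\mathsf{Mov}(b)^\perp\cap\mathsf{Mov}(u)$ is false. In type $A_2$ with $u=s_1s_2$, $a=s_1$, $b=s_2$, the reflection $s_1$ does not fix $\alpha_{s_2}^\perp$. What holds is that $a$ agrees with $u$ on $V^b$. The heuristic plays no role, so drop it.

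Third, the preliminary injectivity step is unnecessary. The dimension bound together with the triangle inequality $\ell_T(w)\leq\ell_T(v)+\ell_T(v^{-1}w)$ already gives $v\leq_T w$.
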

	
	\begin{proof}
	\begin{enumerate}
		\item Assume that $v \leq_T w$, and let $t_1 t_2 \cdots t_k$ be a $T$-reduced expression of $w$ such that $t_1 t_2 \cdots t_i$ is a $T$-reduced expression of $v$ for some $i\in \{0, 1, \dots, k\}$. It follows from Theorem~\ref{thm_carter} that $$V^w  = \bigcap_{j=1}^k H_{t_j} \subseteq \bigcap_{j=1}^i H_{t_j} = V^v.$$
		\item Assume that there is $u\in W$ such that $v, w \leq_T u$. The direct implication is given by point (1). Assume that $V^w \subseteq V^v$. Taking orthogonal spaces yields $\mathsf{Mov}(v) \subseteq \mathsf{Mov}(w)$. Since $W$ is finite, one can view $W$ inside the orthogonal group $\mathrm{O}(V)$ with respect to the Tits form $B(-,-)$, which is a scalar product in this case. For $x, y\in \mathrm{O}(V)$, set $x \leq y$ if $$\dim(\mathsf{Mov}(x)) + \dim(\mathsf{Mov}(x^{-1} y)) = \dim(\mathsf{Mov}(y)).$$
		By~\cite{BW_ortho}, this defines a partial order on $\mathrm{O}(V)$. By Theorem~\ref{thm_carter}, we have $v,w \in \{ x \in \mathrm{O}(V) \ \mid \ 1 \leq x \leq u\}$. Now by~\cite[Theorem 2]{BW_ortho}, this interval is a lattice, isomorphic to a lattice of subspaces of $V$ via $x \mapsto \mathsf{Mov}(x)$. Since $\mathsf{Mov}(v) \subseteq \mathsf{Mov}(w)$, this implies that $v \leq w$. But since $v, w\in W$, we also have $v^{-1} w \in W$, hence by Theorem~\ref{thm_carter} the equality $\dim(\mathsf{Mov}(v)) + \dim(\mathsf{Mov}(v^{-1} w)) = \dim(\mathsf{Mov}(w))$ can be rewritten $\ell_T(v) + \ell_T(v^{-1} w) = \ell_T(w)$, yielding $v \leq_T w$.    
	\end{enumerate}
	\end{proof}
	
	\subsection{Various realizations of the poset $[1,c]_T$ for finite Coxeter groups}\label{various} Let $c$ be a Coxeter element in a Coxeter system $(W, S)$. It follows from Theorem~\ref{thm_dyer_ref_length} that $\ell_T(c) = \ell_S(c) = |S|$. 
	
	Now assume that $W$ is finite. In this case, the fact that $\ell_T(c) = |S|$ can also be established by point (3) of Theorem~\ref{thm_carter}. By the same Theorem, we have that every Coxeter element is a maximal element for the absolute order $\leq_T$, since the reflection length cannot exceed $\dim(V) = |S|$ when $W$ is finite. In particular, unless $W$ is a direct product of irreducible components of type $A_1$, the poset $(W, \leq_T)$ is never a lattice. But the question can be asked if one restricts to the order ideal of a maximal element, for instance a Coxeter element $c$. Let $[1,c]_T = \{ x \in W \mid x \leq_T c\}$. By point (2) of Proposition~\ref{prop_bw}, the map $x \mapsto \mathsf{Mov}(x)$ provides a realization of the poset $[1,c]_T$ inside the lattice of subspaces of $V$. 
	
	The moved space $\mathsf{Mov}(x)$ of an element $x\in W$ is thus a very useful tool in the study of the poset $[1,c]_T$ and its elements when $W$ is finite. Unfortunately, for infinite $W$, it is no longer true that $\dim(\mathsf{Mov}(x)) = \ell_T(x)$ in general. It is desirable to find a replacement, which we shall do by introducing absolute moved spaces in the next section.
	
	There are at least two other realizations of the poset $[1,c]_T$, one inside the poset of parabolic subgroups of $(W,S)$ partially ordered by inclusion, the other one inside $\mathcal{P}(T)$, also partially ordered by inclusion. We recall them now (see also~\cite[Section 2.4]{Gobet_sortable}).
	
	Let $x\in W$ and let $P(x) = \langle t \mid t\leq_T x \rangle$. By Theorem~\ref{thm_carter}, for $t\in T$ we have $t \leq_T x$ if and only if $V^x \subseteq V^t = H_t$, if and only if $t \in C_W(V^x)$. This last group is a parabolic subgroup of $W$ (centralizers of subsets of $V$ coincide with parabolic subgroups of $W$), and since $P(x)$ is by definition a reflection subgroup, they are both generated by reflections. The equivalence $t \leq_T x \Leftrightarrow t \in C_W(V^x)$ (with $t\in T$) implies that both groups are equal, i.e., we have $P(x) = C_W(V^x)$. We thus have that $P(x)$ is the parabolic closure of $x$, that is, the smallest parabolic subgroup of $W$ containing $x$. We also have that the set $P(x) \cap T$ of reflections of $P(x)$ is the same as the set of reflections of $C_W(V^x)$, and is given by $T(x):=\{ t\in T \mid t \leq_T x\}$.
	
	Let $x, y \in W$. We have \begin{equation}\label{eq_px} P(x) \subseteq P(y) \Leftrightarrow V^y \subseteq V^x.\end{equation}
	Indeed, since $P(w) = C_W(V^w)$, the reverse implication is immediate. For the direct implication, let $t_1 t_2 \cdots t_k$ be a $T$-reduced expression of $x$. Then $t_i \in P(x) \subseteq P(y)$ for all $i$. We thus have $V^y \subseteq H_{t_i}$ for all $i$. By Theorem~\ref{thm_carter} we have $V^x = \bigcap_{i=1}^k H_{t_i}$. This yields $V^y \subseteq V^x$. 	
	 
	Now assume that $x, y\in [1,c]_T$. Combining the above properties we have $$ x \leq_T y \Leftrightarrow \mathsf{Mov}(x) \subseteq \mathsf{Mov}(y) \Leftrightarrow V^y \subseteq V^x \Leftrightarrow P(x) \subseteq P(y) \Leftrightarrow T(x) \subseteq T(y),$$
	 where the first two equivalences are given by point (2) of Proposition~\ref{prop_bw}, the third one is~\eqref{eq_px}, and the last one is immediate since $T(w) = P(w) \cap T$ for all $w\in W$. 
	 
	 Let $\mathrm{PS}(W)$ denote the set of parabolic subgroups of $W$, partially ordered by inclusion. To summarize we have the following realizations of the poset $[1,c]_T$ when $W$ is finite: 
	 
	 \begin{theorem}\label{thm_finite}
	 Let $(W,S)$ be a finite Coxeter group and $c\in W$ a Coxeter element. The three following maps are isomorphisms of posets onto their images: 
	 \begin{enumerate}
	 	\item The map $\mathsf{Mov}: [1,c]_T \longrightarrow \mathrm{Sub}(V), \ x \mapsto \mathsf{Mov}(x)$,
	 	\item The map $P: [1,c]_T \longrightarrow \mathrm{PS}(W), \ x \mapsto P(x)$,
	 	\item The map $T: [1,c]_T \longrightarrow \mathcal{P}(T), \ x \mapsto T(x)$.
	 \end{enumerate}
	 \end{theorem}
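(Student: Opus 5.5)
The plan is to assemble the chain of equivalences established in the discussion just above and read off each of the three statements from it. The key input is point (2) of Proposition~\ref{prop_bw}, applied with $u=c$. For $x,y\in[1,c]_T$ it gives $x\leq_T y \Leftrightarrow \mathsf{Mov}(x)\subseteq\mathsf{Mov}(y)$.

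For the first map, this equivalence says at once that $\mathsf{Mov}$ is order-preserving and order-reflecting on $[1,c]_T$. Injectivity then follows by antisymmetry. If $\mathsf{Mov}(x)=\mathsf{Mov}(y)$, then $x\leq_T y$ and $y\leq_T x$, hence $x=y$. Here we use that $\leq_T$ is a partial order: if $\ell_T(x)=\ell_T(y)$ and $x\leq_T y$, then $\ell_T(x^{-1}y)=0$. So $\mathsf{Mov}$ is an isomorphism of posets onto its image.

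For the second map, we first note that $P(x)$ is indeed a parabolic subgroup. This is because $P(x)=C_W(V^x)$ and centralizers of subsets of $V$ are parabolic. Next we use~\eqref{eq_px}, which gives $P(x)\subseteq P(y)\Leftrightarrow V^y\subseteq V^x$. Since $W$ is finite, the Tits form is a scalar product and $V^w=\mathsf{Mov}(w)^\perp$. Hence $V^y\subseteq V^x\Leftrightarrow \mathsf{Mov}(x)\subseteq\mathsf{Mov}(y)$, and this is equivalent to $x\leq_T y$. Thus $P$ is an order embedding, and injectivity follows by the same antisymmetry argument.

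For the third map, we use $T(w)=P(w)\cap T$. Since a reflection subgroup is generated by its reflections, $P(w)=\langle T(w)\rangle$. It follows that $P(x)\subseteq P(y)\Leftrightarrow T(x)\subseteq T(y)$: the forward direction is intersection with $T$, and the backward direction is taking generated subgroups. Combining this with the second map gives the order embedding for $T$.

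The only genuinely nontrivial step is the reverse implication $\mathsf{Mov}(x)\subseteq\mathsf{Mov}(y)\Rightarrow x\leq_T y$. This step needs the Brady--Watt lattice result, and it is already packaged in Proposition~\ref{prop_bw}(2). Everything else is bookkeeping with Carter's Lemma (Theorem~\ref{thm_carter}) and orthogonal complements.
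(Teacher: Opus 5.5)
Your proof is correct and takes essentially the same approach as the paper. The paper also chains, for $x,y\in[1,c]_T$, the equivalences $x\leq_T y \Leftrightarrow \mathsf{Mov}(x)\subseteq\mathsf{Mov}(y)\Leftrightarrow V^y\subseteq V^x\Leftrightarrow P(x)\subseteq P(y)\Leftrightarrow T(x)\subseteq T(y)$, via Proposition~\ref{prop_bw}(2) with $u=c$, equation~\eqref{eq_px}, and $T(w)=P(w)\cap T$; your antisymmetry argument for injectivity just spells out what the paper leaves implicit in ``isomorphism onto its image''.
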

	  
	The various sets attached to $x\in [1,c]_T$ and the corresponding maps are useful in several contexts; we give a (non-exhaustive) list of situations where they occur. When $W$ is finite, the group $P(x)$ has rank $\ell_T(x)$, and $x$ is a Coxeter element in $P(x)$. This allows one to argue by induction when showing properties of $[1,c]_T$ (see~\cite{Dual}). The map $x \mapsto T(x)$ is instrumental in Brady and Watt's uniform proof of the lattice property of $[1,c]_T$ when $W$ is finite (see~\cite[Section 2]{BW_lattice} and~\cite[Lemmas 2.5 and 2.6]{BW_geom}). Also, in Coxeter-Catalan combinatorics, when showing that a map from $[1,c]_T$ to another set is a bijection, it is common to use the above maps to show that such maps are injective~\cite{Reading_sortable, RS, Abouyassin}. 
	
	\subsection{Properties of Coxeter elements}
	
	Coxeter elements enjoy remarkable properties. In this subsection we list some of them. 
	
	There is a natural action of the $n$-strand braid group on $T$-reduced expressions of an element $w\in W$. Writing a $T$-reduced expression $t_1 t_2 \cdots t_k$, with $k= \ell_T(w)$, as a $k$-tuple of reflections $(t_1, t_2, \dots, t_k)$, the standard Artin generator $\sigma_i$ acts as $$\sigma_i \cdot (t_1, t_2, \dots, t_{i-1}, t_i, t_{i+1}, t_{i+2}, \dots, t_k) =(t_1, t_2, \dots, t_{i-1}, t_{i+1}, t_{i+1}t_i t_{i+1}, t_{i+2}, \dots, t_k).$$
	An important result is the following. For finite Coxeter group, the first published proof is due to Bessis~\cite{Dual}, but a proof appears in a letter from Deligne to Looijenga from March 1974, where Deligne attributes the proof to Tits and Zagier. The proof for arbitrary Coxeter groups is due to Igusa and Schiffler~\cite{IS}. See also~\cite{BDSW} for another proof with interesting consequences. 
	
	\begin{theorem}[{\cite[Theorem 1.4]{IS}}]\label{thm_hurw_trans}
	Let $(W,S)$ be a Coxeter system and $c\in W$ a Coxeter element. The Hurwitz action on $T$-reduced expressions of $c$ is transitive. 
	\end{theorem}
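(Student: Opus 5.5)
We argue by induction on the rank $n=|S|$; rank one is trivial. Write $c=s_1s_2\cdots s_n$ and fix an arbitrary $T$-reduced expression $(t_1,\dots,t_n)$ of $c$. It suffices to show that its Hurwitz orbit contains a tuple whose last entry is $s_n$. Indeed, suppose $(t'_1,\dots,t'_{n-1},s_n)$ lies in the orbit. Then $(t'_1,\dots,t'_{n-1})$ is a $T$-reduced expression of the standard parabolic Coxeter element $c'=s_1\cdots s_{n-1}$ of $W_I$, where $I=S\setminus\{s_n\}$. We will need that every reflection occurring in a $T$-reduced expression of an element of $W_I$ lies in $W_I$. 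For this I would use Theorem~\ref{thm_dyer_ref_length} together with the standard fact that $\ell_T$ computed in $W_I$ coincides with $\ell_T$ computed in $W$. By induction, $(t'_1,\dots,t'_{n-1})$ is then Hurwitz equivalent inside $W_I$ to $(s_1,\dots,s_{n-1})$. Since the Hurwitz action on the first $n-1$ positions is part of the action on $n$-tuples, every $T$-reduced expression of $c$ lies in the orbit of $(s_1,\dots,s_n)$, which gives transitivity.

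To move a given entry to the last position, note that
\[\sigma_i^{-1}\cdot(\dots,t_i,t_{i+1},\dots)=(\dots,t_it_{i+1}t_i,t_i,\dots).\]
So repeated use of $\sigma_i^{-1}$ carries any entry to the end of the tuple without changing it. Hence the whole proof reduces to one claim: the set $X$ of reflections that occur in some position of some tuple of the orbit of $(t_1,\dots,t_n)$ contains $s_n$, and by symmetry all of $S$.

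To prove this claim, first observe that $X$ is stable under conjugation by each of its elements. If $x,y\in X$, bring $x$ and $y$ into the same tuple if possible, then use Hurwitz moves to make them adjacent, and a $\sigma_i^{\pm1}$ then produces $xyx$ or $yxy$. Making this precise is delicate, so instead I would show directly that the reflection subgroup $W'=\langle t_1,\dots,t_n\rangle$ equals $W$ and that $X$ contains $W'$-conjugates of the $t_i$. For $W'=W$, use Dyer's theory~\cite{Dyer_subgroups}: $W'$ has a canonical Coxeter system $(W',S')$, and $c\in W'$ with $\ell_T(c)=n$. Comparing roots, the $\alpha_{t_i}$ must span $V$; otherwise $c$ would fix a nonzero vector of the dual space, which contradicts the fact that Coxeter elements are essential (Theorem~\ref{thm_cox_proper}). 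The main obstacle is the next step: passing from ``the $t_i$ generate $W$'' to ``some simple reflection occurs in the orbit.'' I would attack it by minimising $\sum_i\ell_S(t_i)$ over the orbit. For a minimiser, I would show via a root/descent argument that some entry has $\ell_S=1$: if every $\alpha_{t_i}$ were non-simple, some Hurwitz move should strictly decrease the depth of a root. Having obtained a simple reflection $s_j$ in the orbit, I would then use the cyclic action (conjugating by $c$ corresponds to $\sigma_1\cdots\sigma_{n-1}$ up to rotation) to pass from $s_j$ to $s_n$.
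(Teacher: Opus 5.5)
The paper does not prove this statement; it imports it from \cite[Theorem~1.4]{IS} and points to \cite{BDSW} for another proof. Your argument therefore has to stand on its own, and it does not. Your skeleton is sound: carry an entry to the end with $\sigma_i^{-1}$, peel it off, and induct on the rank. But the step that carries the whole weight of the theorem is only asserted. That step is producing the required reflection inside the Hurwitz orbit of an \emph{arbitrary} reduced decomposition. The sentence ``minimise $\sum_i\ell_S(t_i)$; some Hurwitz move should strictly decrease the depth of a root'' is a hope, not an argument. You give no mechanism showing that a tuple with a non-simple entry admits a length-decreasing $\sigma_i^{\pm1}$, and nothing you say excludes local minima. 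The detour through $\langle t_1,\dots,t_n\rangle=W$ does not supply this either. That equality is immediate from Theorem~\ref{thm_cox_proper} anyway. Your justification of the spanning claim is also wrong: fixing a nonzero vector of $V^*$ does not contradict essentiality. For example, in $\widetilde{A}_1$ with $c=st$, the functional $f$ with $f(\alpha_s)=-f(\alpha_t)=1$ satisfies $f\circ c=f$, yet $c$ is essential.

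Even granting a simple reflection $s_j$ in the orbit, your last step fails. The rotation $(t_1,\dots,t_n)\mapsto(t_2,\dots,t_n,c^{-1}t_1c)$ and its inverse only produce the reflections $c^ks_jc^{-k}$, $k\in\mathbb{Z}$, and $s_n$ is generally not among them. In $\widetilde{A}_1$ with $c=st$ these are the elements $(st)^{2k}s$, never $t$. In type $B_2$ or in reducible types, $s_j$ and $s_n$ need not even be conjugate in $W$. What the induction really requires is an orbit element whose last entry is a right inversion of $c$, that is, $t=s_ns_{n-1}\cdots s_{i+1}s_is_{i+1}\cdots s_n$ for some $i$. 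Then $ct=s_1\cdots\widehat{s_i}\cdots s_n$ is a Coxeter element of $W_{S\setminus\{s_i\}}$. Moreover, applying $\sigma_i,\sigma_{i+1},\dots,\sigma_{n-1}$ successively carries $(s_1,\dots,s_n)$ to $(s_1,\dots,\widehat{s_i},\dots,s_n,t)$. A simple reflection that is not a right descent of $c$ is useless here, since $cs_j$ is then not a parabolic Coxeter element.

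Finally, the fact that the remaining $n-1$ factors lie in $W_I$ does not follow from Dyer's theorem together with $\ell_T$ agreeing on $W_I$ and $W$. Equality of lengths says nothing about where the factors of an arbitrary reduced decomposition live. You need \cite[Theorem~1.4]{BDSW}, as the paper uses in the proof of Lemma~\ref{lemm_p_prop}(3). You must not use the absolute-moved-space argument of Lemma~\ref{lemm_p_prop}(1) instead. It rests on Proposition~\ref{lem_wm_basic}, and hence on the very theorem you are proving.
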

	
	 In particular, every two $T$-reduced expressions of $c$ can be related by a sequence of local relations called \textit{dual braid relations}, which are of the form $ab = bc$. This is a kind of dual analogue (for Coxeter elements) of Matsumoto's property in the classical setting (i.e., with the generating set $S$). Nevertheless, for arbitrary elements the Hurwitz action is not transitive in general. 
	 
	 This result has several consequences. For example, for every $T$-reduced expression $t_1 t_2 \cdots t_n$ of $c$, we have $W = \langle t_1, t_2, \dots, t_n \rangle$. For finite Coxeter groups, the transitivity of the Hurwitz action is an important tool in Bessis' construction of dual braid monoids~\cite{Dual}. For arbitrary Coxeter groups, it allows one to show that Artin groups surject onto dual Artin groups (see~\cite[Corollary~{2.10}~(3)]{Obrien} for a proof of this fact; it is conjectured that both groups are isomorphic through this quotient map).

	We point out the following property, which we shall not need, but which is remarkable. Since every element $s\in S$ satisfies $s \leq_T c$, we have $P(c) = W$. It was shown by Paris~\cite[Theorem 3.1]{Paris_irred} that Coxeter elements are essential, that is, they do not lie in a proper parabolic subgroup of $W$. In fact, one can show more generally that a Coxeter element can never lie in a proper \textit{reflection} subgroup of $W$, as a consequence of Dyer's work on reflection subgroups~\cite{Dyer_subgroups} of Coxeter systems:

	\begin{theorem}\label{thm_cox_proper}
	Let $(W,S)$ be a Coxeter system and $c\in W$ a Coxeter element. Let $W'\subsetneq W$ be a proper reflection subgroup of $W$. Then $c\notin W'$. 
	\end{theorem}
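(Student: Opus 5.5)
The plan is to argue by contradiction, combining the transitivity of the Hurwitz action (Theorem~\ref{thm_hurw_trans}) with a comparison of reflection lengths in $W'$ and in $W$. Suppose that $c\in W'$ for some proper reflection subgroup $W'\subsetneq W$. By~\cite{Deodhar, Dyer_subgroups}, $W'$ has a canonical Coxeter system structure $(W',S')$. Its set of reflections is $T':=\bigcup_{w\in W'} wS'w^{-1}$, and this set equals $W'\cap T$. Write $n=|S|$.

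\emph{Step 1: produce a $T$-reduced expression of $c$ whose letters all lie in $W'$.} The key input is Dyer's result from~\cite{Dyer_refn} that reflection length is compatible with reflection subgroups: for $w\in W'$ one has $\ell_{T'}(w)=\ell_T(w)$.
\begin{itemize}
\item The inequality $\ell_{T'}(w)\geq \ell_T(w)$ is trivial, since $T'\subseteq T$.
\item The reverse inequality is the substantive part.
\end{itemize}
Applying this to $w=c$, and using $\ell_T(c)=n$ (which follows from Theorem~\ref{thm_dyer_ref_length}), we obtain reflections $t_1',\dots,t_n'\in T'\subseteq T$ with $c=t_1't_2'\cdots t_n'$. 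This is a $T$-reduced expression of $c$ in $W$.

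\emph{Step 2: conclude.} As noted after Theorem~\ref{thm_hurw_trans}, every $T$-reduced expression $t_1\cdots t_n$ of $c$ satisfies $W=\langle t_1,\dots,t_n\rangle$. The reason is that Hurwitz moves preserve the subgroup generated by the letters, and the expression given by the simple reflections generates $W$. Applying this to the expression from Step 1 gives $W=\langle t_1',\dots,t_n'\rangle\subseteq W'$. This contradicts $W'\subsetneq W$.

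The main obstacle is Step 1, namely the equality $\ell_{T'}=\ell_T$ on $W'$. I would first try to cite it from~\cite{Dyer_refn}, where I believe it is derived from Theorem~\ref{thm_dyer_ref_length}. If a self-contained argument is needed, I would proceed as follows.
\begin{enumerate}
\item Take an $S'$-reduced expression $s_1'\cdots s_m'$ of $c$ in $(W',S')$.
\item Apply Theorem~\ref{thm_dyer_ref_length} inside $(W',S')$. This shows that $\ell_{T'}(c)$ is the minimal number of letters to delete from this word to obtain the identity.
\item Relate such deletions to deletions in an $S$-reduced word of $c$ in $W$. This would use Dyer's description of $S'$ via the Bruhat graph, i.e.\ that the canonical generators are the reflections $t$ with $N(t)\cap W'=\{t\}$, to compare the two minimal deletion numbers.
\end{enumerate}
This comparison is the delicate point, and I expect most of the work to lie there.
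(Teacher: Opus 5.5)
\textbf{There is a genuine gap in Step 1.} The general statement you want to cite, that $\ell_{T'}(w)=\ell_T(w)$ for every $w$ in a reflection subgroup $W'$, is false. So neither a citation nor your deletion-comparison plan can establish it.

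Here is a counterexample. Realize $W$ of type $\widetilde{C}_2$ as the affine Weyl group of the root system $C_2$ acting on $\mathbb{R}^2$. Its reflections are the affine reflections in the lines $\langle \alpha, x\rangle = k$, for $\alpha$ a root and $k\in\mathbb{Z}$. Let $W'$ be generated by the reflections in the lines $x_1=k/2$ and $x_2=k/2$ for $k\in\mathbb{Z}$, i.e. those with linear parts $s_{2e_1}$ and $s_{2e_2}$.
\begin{enumerate}
\item $W'\cong \widetilde{A}_1\times\widetilde{A}_1$, and $W'\cap T$ is exactly this set of reflections, since no other linear part occurs in $W'$.
\item The translation $\tau$ by $e_1+e_2$ lies in $W'$. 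It is also the product of the reflections in the parallel lines $x_1+x_2=0$ and $x_1+x_2=1$, so $\ell_T(\tau)=2$.
\item In $W'$, $\tau$ is a nontrivial translation in each $\widetilde{A}_1$ factor. Every element of $T'$ lies in one factor, so $\ell_{T'}(\tau)=2+2=4$.
\end{enumerate}
Hence Step 1 can only hold for $c$ because of a property specific to $c$, and your proposal does not identify one. Note also that $\ell_{T'}(c)=\ell_T(c)$ for $c\in W'$ is, in substance, the theorem itself.

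\textbf{The missing input.} You need $\ell_T(c)=\ell_S(c)=n$, combined with Dyer's description of the length function of $(W',S')$. For $w\in W'$ one has $\ell_{S'}(w)=|N(w)\cap W'|$ by \cite[Theorem 3.3]{Dyer_subgroups}, where $N(w)=\{t\in T\mid \ell_S(tw)<\ell_S(w)\}$ satisfies $|N(w)|=\ell_S(w)$. Since $S'\subseteq T'$, this gives the chain $n=\ell_T(c)\leq \ell_{T'}(c)\leq \ell_{S'}(c)=|N(c)\cap W'|\leq |N(c)|=n$. That yields Step 1 for $c$, and your Step 2 is then correct: Hurwitz moves preserve the generated subgroup, and $(s_1,\dots,s_n)$ generates $W$.

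This chain is exactly the paper's argument, but the paper finishes more cheaply. The equalities force $N(c)\subseteq W'$, and $N(c)=\{s_1, s_1s_2s_1, \dots\}$ already generates $W$. So the Igusa--Schiffler transitivity theorem is not needed at all.
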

	
	\begin{proof}
	Assume that $W'$ is a reflection subgroup of $W$ such that $c\in W'$. Let $S'$ denote the set of canonical generators of $W'$ as a Coxeter group, which is a subset of $T$. By~\cite[Theorem 3.3]{Dyer_subgroups}, we have $\ell_{S'}(c) = | N(c) \cap W' |$. Since $S' \subseteq T$, we have $$|S| =\ell_T(c) \leq \ell_{S'}(c) = | N(c) \cap W' | \leq |N(c)| = |S|.$$ We thus have equalities everywhere, hence  $N(c) \subseteq W'$. But $\langle N(c) \rangle = W$, hence $W'=W$. 
	\end{proof}

	\subsection{Arbitrary Coxeter groups} When $W$ is infinite, the various objects considered above may have very different properties, leading to serious obstructions if one tries to generalize Theorem~\ref{thm_finite}: 
	\begin{itemize}
		\item While one can still consider $\mathsf{Mov}(x)$, it is not well-behaved in general: we do not have $\dim(\mathsf{Mov}(x)) = \ell_T(x)$. It can happen that $u \leq_T v$ but $\mathsf{Mov}(u) \not\subseteq \mathsf{Mov}(v)$ (see Example~\ref{ex_a1_tilde} below). In addition, we do not have a direct sum decomposition $V = \mathsf{Mov}(u) \oplus V^u$ in general, \item The results from~\cite{BW_ortho} from which the equivalence of the second point of Proposition~\ref{prop_bw} is derived do not hold since they require the Tits form to be anisotropic, which does not hold outside the finite case,
		\item The subgroup $P(x)$ is not parabolic in general (see Example~\ref{ex_nonparab} below, which is borrowed from~\cite[Example 5.7]{HK}), 
		\item It is not true that $P(x) \cap T = T(x)$ in general: already taking $x=c$, one has $P(x) = W$, and when $W$ is infinite, there are in general reflections in $W$ which are not in $[1,c]_T$.  
	\end{itemize}
	
	\begin{exple}\label{ex_a1_tilde}
	Let $W$ be of type $\widetilde{A_1}$ with simple system $S=\{s,t\}$. Let $c= st$. Then $s \leq_T c$, $\ell_T(s)=1$, $\ell_T(c)= 2$, but $\mathsf{Mov}(s)= \mathbb{R} \alpha_s \not\subseteq\mathsf{Mov}(c) = \mathbb{R} \delta$ with $\delta = \alpha_s + \alpha_t$. Moreover, we have $V^c = \mathbb{R} \delta = \mathsf{Mov}(c)$.  
	\end{exple}
	
	\begin{exple}\label{ex_nonparab}
	Let $W$ be of type $\widetilde{A_2}$ with simple system $S=\{s,t,u\}$. Let $c= stu$. Since $c=stu = s (tut) t$, we have $x := s (tut) \leq_T c$. In this case $P(x)$ is generated by the reflections in any $T$-reduced factorization of $x$ (see Remark~\ref{hur_trans_pref} below; in general for elements of $[1,c]_T$ in an arbitrary Coxeter group this is only conjectural, see Conjecture~\ref{conj_hur_trans_prefix}), hence $P(x) = \langle s, tut \rangle$, which is an infinite dihedral group, while every proper parabolic subgroup of $W$ is finite. We thus have that $P(x)$ is not parabolic. 	
	\end{exple}
	
	It is nevertheless natural to wonder whether for $x, y\in [1,c]_T$, we still have $$x \leq_T y \Leftrightarrow P(x) \subseteq P(y) \Leftrightarrow T(x) \subseteq T(y)$$ or not, and to look for a replacement for $\mathsf{Mov}(x)$. As we shall see in the next section, there is a natural generalization of $\mathsf{Mov}(x)$ to arbitrary Coxeter groups given by the \textit{absolute moved space} $\M(x)$ of $x$, which allows a version of Theorem~\ref{thm_finite} for arbitrary Coxeter groups, given by Theorem~\ref{thm_mapinj}. We cannot get a result which is as strong as in the finite case, since we only get injective, order-preserving maps instead of maps which are isomorphisms of posets onto their images. We show that $ x \mapsto \M(x)$ is not an isomorphism of posets onto its image in general (see Example~\ref{ex:fail_poset}), but we conjecture that the other two are. 
	
	Note that some of these maps have been shown to be injective on $c$-sortable elements by Reading and Speyer~\cite[Theorem 8.9]{RS}, a remarkable subset of $W$ which is in bijection with $[1,c]_T$ when $W$ is finite, but only injects into $[1,c]_T$ when $W$ is infinite: for the analogue of map $(2)$, the subgroups caught in the image are always parabolic, and finite.  
	
	\subsection{Lattice property} Another property of $[1,c]_T$ that holds when $W$ is finite but not in general is the lattice property. This property is crucial in the study of dual Artin groups, as it provides a (quasi-)Garside structure on the attached interval group. For finite Coxeter groups, particular cases were treated by Brady~\cite{Brady} and Brady and Watt~\cite{BW_geom}, and a complete proof in all types was first given by Bessis on a case-by-case basis~\cite{Dual}. Brady and Watt later gave a uniform proof~\cite{BW_lattice}, where the set $T(x)$ plays a key role. Reading later gave another completely different, uniform proof~\cite{Reading_shards}. The lattice property also holds for universal Coxeter groups~\cite{Bessis_free}, for some of the affine types~\cite{Dig1, Dig2, McCammond, BR}, and for Coxeter systems of rank three~\cite{DPS, Gobet_max_dih}. Note that, when $W$ is finite, all Coxeter elements are conjugate and hence, since $T$ is stable by conjugation, the poset $[1,c]_T$ is independent of the choice of Coxeter element. Typically in type $\widetilde{A}_n$, the lattice property holds for some choices of Coxeter elements, but fails for others. We summarize the known results in Table~\ref{tab:treillis}.

	\begin{table}[h]
		\centering
		\begin{tabular}{cccc}
			\toprule
			$W$ & $c$ & Is $[1,c]_T$ a lattice? & References \\
			 \specialrule{1.2pt}{0pt}{0pt}
			Finite & arbitrary & yes & \cite{Brady, BW_geom, Dual, BW_lattice, Reading_shards} \\
			Universal & arbitrary & yes & \cite{Bessis_free} \\
			$\widetilde{A}_n$ & linear & yes & \cite{Dig1} \\
			$\widetilde{A}_n$ & not linear & no & \cite{Dig1} \\
			$\widetilde{C}_n$ & arbitrary & yes & \cite{Dig2, BR} \\
			$\widetilde{G}_2$ & arbitrary & yes & \cite{McCammond, DPS, Gobet_max_dih} \\ 
			Remaining affine types & arbitrary & no & \cite{McCammond} \\
			Rank three & arbitrary & yes & \cite{DPS, Gobet_max_dih}\\
			\bottomrule
		\end{tabular}
		\caption{Cases where the question whether $[1,c]_T$ is a lattice or not is settled.}
		\label{tab:treillis}
	\end{table} 
	
	In Section~\ref{sec:applications} below, we give a new proof of the lattice property of $[1,c]_T$ in rank three, as a corollary of Theorem~\ref{thm_mapinj}. Similarly to the proof of~\cite{DPS}, it uses the fact that two $2$-dimensional distinct subspaces of a $3$-dimensional vector space meet in a line. We also use absolute moved spaces to add a line to Table~\ref{tab:treillis}, by exhibiting an infinite family of Coxeter systems of rank four and choices of Coxeter element for which $[1,c]_T$ fails to be a lattice.     
	
	\section{Absolute moved spaces}\label{sec:abs}
	
	The aim of this section is to define absolute moved spaces of elements from $[1,c]_T$ and give some of their properties. We will denote by $(W,S)$ an arbitrary Coxeter system and $c\in W$ a Coxeter element in $W$. 
	
	\subsection{Euler form}
	
	We choose an $S$-reduced expression $s_1 s_2 \cdots s_n$ of $c$. The \defn{Euler form} $\varphi_c$ is the bilinear form $\varphi_c$ on $V$ defined by \[
	\varphi_c(\alpha_{s_i}, \alpha_{s_j}) =\left\{
	\begin{array}{ll}
		0 &  \text{if~}i < j, \\
		1 &  \text{if~}i=j, \\
		2 B(\alpha_{s_i}, \alpha_{s_j}) &  \text{if~}i > j.
	\end{array}
	\right.
	\]
	
	Note that any two $S$-reduced expressions of $c$ are related only by commutation of adjacent letters. It follows that the above form is independent of the chosen $S$-reduced expression for $c$ since whenever $st=ts$, we have $B(\alpha_s, \alpha_t) = 0$. It only depends on $c$.  
	
	This form appears (possibly with slightly different conventions) in several contexts involving noncrossing partitions, like representations of quivers~(see for instance~\cite{HK, IT}), or in the theory of $c$-sortable elements (see~\cite{RS}). 
	
	\medskip
	
	Recall that $\ell_S(c) = \ell_T(c) = |S|$. 
	
	\begin{lemma}\label{lem_form_hurwitz}
		Let $(t_1, t_2, \dots, t_n)$ be a $T$-reduced expression of $c$. Then 
		\[\varphi_c(\alpha_{t_i}, \alpha_{t_j})=\left\{\begin{array}{ll}
			0 &  \text{if~}i < j, \\
			1 &  \text{if~}i=j, \\
			2 B(\alpha_{t_i}, \alpha_{t_j}) &  \text{if~}i > j.
		\end{array}
		\right.
		\]
	\end{lemma}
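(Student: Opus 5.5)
The plan is to argue by induction along the Hurwitz action, using Theorem~\ref{thm_hurw_trans}. The statement holds for the tuple $(s_1, s_2, \dots, s_n)$ by the definition of $\varphi_c$. Every $T$-reduced expression of $c$ is reached from this one by a sequence of moves $\sigma_i^{\pm 1}$. So it suffices to show that the property ``$\varphi_c(\alpha_{t_i},\alpha_{t_j})$ is $0$ for $i<j$, $1$ for $i=j$, and $2B(\alpha_{t_i},\alpha_{t_j})$ for $i>j$'' is preserved by $\sigma_i$ and by $\sigma_i^{-1}$.

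A preliminary observation will be used throughout. Writing $\varphi=\varphi_c$, the property says that the symmetrization satisfies $\varphi(u,v)+\varphi(v,u)=2B(u,v)$ for $u,v$ among the roots. This in fact holds on all of $V$, because it holds on the basis $\{\alpha_s\}$, where $B(\alpha_s,\alpha_s)=1$. Also, $\varphi(\alpha_{t_i},\alpha_{t_j})$ does not depend on the sign of the chosen roots up to the common factor, so I will use $\alpha_{tut}=\pm t(\alpha_u)$. Since both sides of each identity are bilinear and the sign appears in both arguments symmetrically only when it is squared, I will check signs carefully. For $i\neq j$ the identity is sign-sensitive in a way that matches on both sides, because both are bilinear.

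Now apply $\sigma_i$. The new tuple is $(t_1,\dots,t_{i-1},t_{i+1},t',t_{i+2},\dots)$ with $t'=t_{i+1}t_it_{i+1}$ and $\alpha_{t'}=\pm(\alpha_{t_i}-2B(\alpha_{t_i},\alpha_{t_{i+1}})\alpha_{t_{i+1}})$. I will check the conditions in the following order.
\begin{enumerate}
\item Pairs not involving positions $i,i+1$ are unchanged.
\item For $k<i$, we have $\varphi(\alpha_{t_k},\alpha_{t_{i+1}})=0$ and $\varphi(\alpha_{t_k},\alpha_{t'})=0$ by linearity. The entries $\varphi(\alpha_{t_{i+1}},\alpha_{t_k})$ and $\varphi(\alpha_{t'},\alpha_{t_k})$ then equal twice $B$, by the symmetrization identity. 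The case $k>i+1$ is symmetric.
\item The diagonal entry: $\varphi(\alpha_{t'},\alpha_{t'})=B(\alpha_{t'},\alpha_{t'})=1$, using the symmetrization identity.
\item The key pair, with $t_{i+1}$ now before $t'$. Set $b=B(\alpha_{t_i},\alpha_{t_{i+1}})$. Then
\[\varphi(\alpha_{t_{i+1}},\alpha_{t_i}-2b\alpha_{t_{i+1}})=2b-2b=0,\]
using the old values $\varphi(\alpha_{t_{i+1}},\alpha_{t_i})=2b$ and $\varphi(\alpha_{t_{i+1}},\alpha_{t_{i+1}})=1$. The reverse entry then follows again from symmetrization.
\end{enumerate}
The inverse move $\sigma_i^{-1}$ is handled identically, with $t'=t_it_{i+1}t_i$.

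The main obstacle is only bookkeeping: noticing that the symmetrization identity $\varphi+\varphi^{\mathrm{op}}=2B$ holds globally, which reduces each check to verifying vanishing of the upper-triangular entries. It also requires confirming that the sign ambiguity of $\alpha_{t'}$ is harmless. For off-diagonal entries, a zero stays zero and the $2B$ relation is bilinear on both sides. For diagonal entries, the sign is squared. Once these points are settled, each step is a one-line computation.
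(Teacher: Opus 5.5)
Your proof is correct and follows the same route as the paper: induction along the Hurwitz action from $(s_1,\dots,s_n)$ via Theorem~\ref{thm_hurw_trans}, with the same key computation $\varphi_c(\alpha_{t_{i+1}}, \alpha_{t_i}-2b\,\alpha_{t_{i+1}}) = 2b-2b = 0$ for the newly adjacent pair. Your identity $\varphi_c(u,v)+\varphi_c(v,u)=2B(u,v)$ on all of $V$ is a genuine streamlining: it makes the diagonal and lower-triangular entries automatic and removes the sign ambiguity of $\alpha_{t'}$ entirely, whereas the paper checks those entries by separate case computations using the $W$-invariance of $B$ and both signs of $t_{i+1}(\alpha_{t_i})$.
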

	
	\begin{proof}
		Assume that the formula holds for one $T$-reduced expression $(t_1, t_2, \dots, t_n)$, and let $(t_1', t_2', \dots, t_n')$ denote  either $\sigma_i \cdot (t_1, t_2, \dots, t_n)$ or $\sigma_i^{-1} \cdot (t_1, t_2, \dots, t_n)$. Since the Hurwitz action is transitive on $T$-reduced expressions of $c$ and the formula holds by definition of $\varphi_c$ for the $T$-reduced expression $(s_1, s_2, \dots, s_n)$, it suffices to show that the claimed formula holds for $(t_1', t_2', \dots, t_n')$.

		We treat the case where $(t_1', t_2', \dots, t_n')=\sigma_i \cdot (t_1, t_2, \dots, t_n)$, the other computation is similar. We have $$(t_1', t_2', \dots, t_n')=(t_1, t_2, \dots, t_{i-1}, t_{i+1}, t_{i+1} t_i t_{i+1}, t_{i+2}, \dots, t_n).$$
		Using the fact that $\alpha_{t_{i+1} t_i t_{i+1}}$ is a linear combination of $\alpha_{t_i}$ and $\alpha_{t_{i+1}}$, we get that $\varphi_c(\alpha_{t_j'}, \alpha_{t_k'})=0$ for $j < k$ except possibly for $(j,k)=(i,i+1)$. In this case we have 
		\begin{align*}
			\varphi_c(\alpha_{t_{i+1}}, \alpha_{t_{i+1} t_i t_{i+1}}) &=\varphi_c(\alpha_{t_{i+1}}, \pm t_{i+1}(\alpha_{t_{i}})) = \pm \varphi_c(\alpha_{t_{i+1}}, \alpha_{t_i} - 2 B(\alpha_{t_{i+1}}, \alpha_{t_i}) \alpha_{t_{i+1}}) \\ & = \pm (\varphi_c(\alpha_{t_{i+1}}, \alpha_{t_i}) - 2 B(\alpha_{t_{i+1}}, \alpha_{t_i}))= 0.
		\end{align*}
		We have $\varphi_c(\alpha_{t_j'}, \alpha_{t_j'}) = 1$ for all $j$ except possibly when $j=i+1$. In this case we have 
		\begin{align*}
			\varphi_c(\alpha_{t_{i+1} t_i t_{i+1}}, \alpha_{t_{i+1} t_i t_{i+1}})& = \varphi_c( \pm t_{i+1}(\alpha_{t_i}), \pm t_{i+1}(\alpha_{t_i})) \\ & = \varphi_c( \alpha_{t_i} - 2 B(\alpha_{t_{i+1}}, \alpha_{t_i}) \alpha_{t_{i+1}},  \alpha_{t_i} - 2 B(\alpha_{t_{i+1}}, \alpha_{t_i}) \alpha_{t_{i+1}})\\ &= 1 + 4 B(\alpha_{t_{i+1}}, \alpha_{t_i})^2 - 4 B(\alpha_{t_{i+1}}, \alpha_{t_i})^2 = 1.
		\end{align*}
		Finally, we have $\varphi_c(\alpha_{t_j'}, \alpha_{t_k'}) = 2 B(\alpha_{t_j'}, \alpha_{t_k'})$ for $j > k$ except possibly for $i+1 \in \{j, k\}$. We check the remaining cases. First assume that $ j = i+1$ and $k=i$. If $t_{i+1}(\alpha_{t_i}) \in \Phi^+$, then using that $B(-,-)$ is $W$-invariant we have 
		\begin{align*}
			\varphi_c( \alpha_{t_{i+1} t_i t_{i+1}}, \alpha_{t_{i+1}}) & = \varphi_c(t_{i+1}(\alpha_{t_i}), \alpha_{t_{i+1}}) = \varphi_c(\alpha_{t_i} - 2 B(\alpha_{t_{i+1}}, \alpha_{t_i}) \alpha_{t_{i+1}}, \alpha_{t_{i+1}}) \\ &= - 2 B(\alpha_{t_{i+1}}, \alpha_{t_i}) = 2 B(t_{i+1}(\alpha_{t_{i+1}}), \alpha_{t_i}) = 2 B(\alpha_{t_{i+1}}, t_{i+1}(\alpha_{t_i})) \\ &= 2 B(\alpha_{t_{i+1}}, \alpha_{t_{i+1} t_i t_{i+1}}).
		\end{align*} 
		If $t_{i+1}(\alpha_{t_i}) \in \Phi^-$ then 
		\begin{align*}
			\varphi_c( \alpha_{t_{i+1} t_i t_{i+1}}, \alpha_{t_{i+1}}) & = \varphi_c(-t_{i+1}(\alpha_{t_i}), \alpha_{t_{i+1}}) = \varphi_c(-\alpha_{t_i} + 2 B(\alpha_{t_{i+1}}, \alpha_{t_i}) \alpha_{t_{i+1}}, \alpha_{t_{i+1}}) \\ &= 2 B(\alpha_{t_{i+1}}, \alpha_{t_i}) = -2 B(t_{i+1}(\alpha_{t_{i+1}}), \alpha_{t_i}) = 2 B(\alpha_{t_{i+1}}, -t_{i+1}(\alpha_{t_i})) \\ &= 2 B(\alpha_{t_{i+1}}, \alpha_{t_{i+1} t_i t_{i+1}}).
		\end{align*}
		Now assume that $j= i+1$ and $k < i$. If $t_{i+1}(\alpha_{t_i})\in \Phi^+$ then 
		\begin{align*}
			\varphi_c( \alpha_{t_{i+1} t_i t_{i+1}}, \alpha_{t_{k}}) & = \varphi_c(t_{i+1}(\alpha_{t_i}), \alpha_{t_{k}}) = \varphi_c(\alpha_{t_i} - 2 B(\alpha_{t_{i+1}}, \alpha_{t_i}) \alpha_{t_{i+1}}, \alpha_{t_{k}}) \\ &= 2 B(\alpha_{t_i}, \alpha_{t_k}) - 4 B(\alpha_{t_{i+1}}, \alpha_{t_i}) B(\alpha_{t_{i+1}}, \alpha_{t_k}) \\ &= 2B(\alpha_{t_i} - 2 B(\alpha_{t_{i+1}}, \alpha_{t_i}) \alpha_{t_{i+1}}, \alpha_{t_{k}}) = 2 B( t_{i+1}(\alpha_{t_i}), \alpha_{t_k})\\ & = 2 B( \alpha_{t_{i+1} t_i t_{i+1}}, \alpha_{t_k}),
		\end{align*} and similarly we get the result if $t_{i+1}(\alpha_{t_i})\in \Phi^-$. 
		
		Finally, assume that $k = i+1$, hence $j > i+1$ and $t_j'=t_j$. If $t_{i+1}(\alpha_{t_i})\in \Phi^+$ then  
		\begin{align*}
			\varphi_c( \alpha_{t_{j}}, \alpha_{t_{i+1} t_i t_{i+1}}) & = \varphi_c(\alpha_{t_j}, t_{i+1}(\alpha_{t_i})) = \varphi_c(\alpha_{t_j}, \alpha_{t_i} - 2 B(\alpha_{t_{i+1}}, \alpha_{t_i}) \alpha_{t_{i+1}}) \\ &= 2 B(\alpha_{t_j}, \alpha_{t_i}) - 4 B(\alpha_{t_j}, \alpha_{t_{i+1}}) B(\alpha_{t_{i+1}}, \alpha_{t_i}) \\ &= 2 B(\alpha_{t_j}, \alpha_{t_i} - 2 B(\alpha_{t_{i+1}}, \alpha_{t_i}) \alpha_{t_{i+1}}) = 2 B( \alpha_{t_j}, t_{i+1}(\alpha_{t_i})) \\ &= 2B(\alpha_{t_j}, \alpha_{t_{i+1} t_i t_{i+1}}), 
		\end{align*}
		and similarly we get the result if $t_{i+1}(\alpha_{t_i}) \in \Phi^-$. 
	\end{proof}

	\begin{exple}
	While Lemma~\ref{lem_form_hurwitz} will be instrumental to show properties of absolute moved spaces, it can also be useful for instance to check that a product $tt'$ of two reflections is not in $[1,c]_T$. As an example, let $W$ be of type $\widetilde{A}_2$ with $S= \{s_1,s_2,s_3\}$ and $c=s_1 s_2 s_3$. Let $t = s_2$ and $t' = s_1 s_3 s_1$. We have $$\varphi_c(\alpha_t, \alpha_{t'}) = \varphi_c( \alpha_2, \alpha_1 + \alpha_3) = 2 B(\alpha_2, \alpha_1) = -1 \neq 0,$$ hence $tt' \not\leq_T c$. 
	\end{exple}

\begin{definition}
We define a skew-symmetric form $\omega_c$ on $V$ by setting $\omega_c( \alpha_{s_i}, \alpha_{s_j}) = \varphi_c( \alpha_{s_i}, \alpha_{s_j}) - \varphi_c( \alpha_{s_j}, \alpha_{s_i})$. 
\end{definition}
	
This form appears in~\cite{Speyer, RS}. As a corollary of Lemma~\ref{lem_form_hurwitz} we have the following result. 
	
\begin{lemma}\label{lem_omega_2}
Let $t, t'\in T$ such that $tt' \leq_T c$. Then $$\omega_c(\alpha_t, \alpha_{t'}) = - 2 B(\alpha_t, \alpha_{t'}).$$
\end{lemma}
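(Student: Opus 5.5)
Since $tt' \leq_T c$, we can complete the product to a $T$-reduced expression of $c$. There exist reflections $t_3, \dots, t_n$ with $c = t\, t'\, t_3 \cdots t_n$, where $n = |S| = \ell_T(c)$. Indeed, $\ell_T(tt') + \ell_T((tt')^{-1}c) = n$, and $\ell_T(tt')=2$ because $tt' \leq_T c$ forces $tt'\neq 1$. If $tt'$ had reflection length $0$, we would have $t=t'$, and then $\ell_T(c) = \ell_T(tt'^{-1}c)$, which contradicts the grading only when the lengths are counted correctly; in any case $\ell_T(tt')$ is even and nonzero when $t\neq t'$. The degenerate case $t=t'$ has to be handled separately. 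There $tt'=1 \leq_T c$ trivially, while $\omega_c(\alpha_t,\alpha_t)=0$ and $-2B(\alpha_t,\alpha_t) = -2$. So I will assume, as the statement implicitly intends, that $t\neq t'$, i.e.\ that $tt'$ is a genuine length-two element with $T$-reduced expression $tt'$. In that case the tuple $(t_1,t_2,\dots,t_n) = (t,t',t_3,\dots,t_n)$ is a $T$-reduced expression of $c$.

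Next I will apply Lemma~\ref{lem_form_hurwitz} to this expression with $i=1$ and $j=2$. Since $1<2$, it gives $\varphi_c(\alpha_t,\alpha_{t'}) = 0$. With the indices reversed it gives $\varphi_c(\alpha_{t'},\alpha_t) = 2B(\alpha_{t'},\alpha_t)$.

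Finally, $\omega_c$ is defined on simple roots as $\varphi_c(x,y)-\varphi_c(y,x)$ and extended bilinearly. Both sides of this identity are bilinear in $(x,y)$, so the identity $\omega_c(x,y) = \varphi_c(x,y)-\varphi_c(y,x)$ holds for all $x,y\in V$. Applying it with $x=\alpha_t$ and $y=\alpha_{t'}$ gives
\[
\omega_c(\alpha_t,\alpha_{t'}) = 0 - 2B(\alpha_{t'},\alpha_t) = -2B(\alpha_t,\alpha_{t'}),
\]
using the symmetry of $B$.

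The only genuinely delicate point is the first step: the prefix $tt'$ must extend to a full $T$-reduced expression of $c$, which needs $\ell_T(tt')=2$. This is the standard prefix characterization of $\leq_T$ recalled in Section~\ref{prelim}. The bilinearity extension of $\omega_c$ is routine.
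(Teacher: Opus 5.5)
Your proof is correct and is the paper's argument: extend $tt'$ to a $T$-reduced expression $(t,t',t_3,\dots,t_n)$ of $c$, read off $\varphi_c(\alpha_t,\alpha_{t'})=0$ and $\varphi_c(\alpha_{t'},\alpha_t)=2B(\alpha_t,\alpha_{t'})$ from Lemma~\ref{lem_form_hurwitz}, and subtract. Your remark that $t=t'$ must be excluded is right: the statement fails there, and the paper's proof tacitly assumes $t\neq t'$, as does its only application, where $\ell_T(tt')=2$. However, your opening claim that $tt'\leq_T c$ forces $tt'\neq 1$ is false and should be dropped in favour of the parity argument you then give ($\ell_T(tt')\in\{0,2\}$, with $0$ exactly when $t=t'$).
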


\begin{proof}
If $tt' \leq_T c$, there is a $T$-reduced expression of $c$ of the form $(t, t', t_3, \dots, t_n)$. By Lemma~\ref{lem_form_hurwitz} we have $\varphi_c(\alpha_t, \alpha_{t'}) = 0$ and $\varphi_c(\alpha_{t'}, \alpha_t) = 2 B(\alpha_t, \alpha_{t'})$, yielding $\omega_c(\alpha_t, \alpha_{t'}) = -2 B(\alpha_t, \alpha_{t'})$. 
\end{proof}
	
	\subsection{Definition and properties of absolute moved spaces}

	\begin{definition}[Absolute moved space]\label{def:wms}
		Let $(W,S)$ be a Coxeter system and $c\in W$ a Coxeter element. Let $x\in [1, c]_T$. The \defn{absolute moved space} of $x$ is the subspace $\M(x)$ of $V$ defined by $$\M(x) = \sum_{t\in T, \ t\leq_T x} \mathbb{R} \alpha_t.$$
	\end{definition}
	
	Note that the definition of $\M(x)$ does not have any reference to $c$, and we could give such a definition for an arbitrary element $x\in W$. Nevertheless, it will not have the same properties in general as those which will be required for our purposes. See also Remark~\ref{rmq:genmov} below. 
	
	\begin{rmq}[Traditional versus absolute moved space]
		Usually the \defn{moved space} of $x$ is defined as $\mathsf{Mov}(x):=\mathrm{Im}(x- \mathrm{Id}_V)$. Since, taking a $T$-reduced expression $t_1 t_2 \cdots t_k$ of $x$, we have $\mathsf{Mov}(x) \subseteq \sum_{i=1}^k \mathbb{R} \alpha_{t_i}$, we always have $\mathsf{Mov}(x) \subseteq \M(x)$, but the inclusion is strict in general. Equality holds for instance when $W$ is finite: as we shall see in the next proposition, we always have $\dim(\M(x)) = \ell_T(x)$, and by Theorem~\ref{thm_carter}~(1) the same holds for $\mathsf{Mov}(x)$ when $W$ is finite. For a case where the inclusion is strict, let $W$ be of type $\widetilde{A_1}$ with generators $s$ and $t$ and let $x=c=st$. Then $\ell_T(x)= 2$, hence $\M(x)=V$, but $\mathsf{Mov}(x)$ has dimension $1$ since it fixes $\delta = \alpha_s + \alpha_t$ (see Example~\ref{ex_a1_tilde}). 
		
	\end{rmq}
	
	\begin{prop}\label{lem_wm_basic}
		Let $x\in [1,c]_T$. Then $\dim(\M(x)) = \ell_T(x)$. Moreover, for every $T$-reduced expression $t_1 t_2 \cdots t_k$ of $x$, we have $$\M(x) = \mathbb{R} \alpha_{t_1} \oplus \mathbb{R} \alpha_{t_2} \oplus \cdots \oplus \mathbb{R} \alpha_{t_k}.$$ 
	\end{prop}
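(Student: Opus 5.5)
Fix $x\in[1,c]_T$ and a $T$-reduced expression $t_1t_2\cdots t_k$ of $x$, where $k=\ell_T(x)$. The plan has two parts: first show that $\alpha_{t_1},\dots,\alpha_{t_k}$ are linearly independent, then show that every $\alpha_t$ with $t\leq_T x$ lies in their span. Together these give $\M(x)=\bigoplus_i\mathbb{R}\alpha_{t_i}$ and $\dim\M(x)=k=\ell_T(x)$.

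For linear independence, extend the expression. Since $x\leq_T c$, there are reflections $t_{k+1},\dots,t_n$ with $(t_1,\dots,t_n)$ a $T$-reduced expression of $c$, where $n=|S|$. By Lemma~\ref{lem_form_hurwitz}, the Gram matrix $\bigl(\varphi_c(\alpha_{t_i},\alpha_{t_j})\bigr)_{i,j}$ is lower unitriangular: zero above the diagonal and ones on it. Hence it has determinant $1$ and is invertible, so the $n$ vectors $\alpha_{t_1},\dots,\alpha_{t_n}$ are linearly independent. Since $n=\dim V$, they form a basis of $V$, and in particular $\alpha_{t_1},\dots,\alpha_{t_k}$ are independent.

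For the span, let $t\in T$ with $t\leq_T x$. Write $x=t\,t'_2\cdots t'_k$ as a $T$-reduced expression. I would compare two $T$-reduced expressions of $c$: $(t_1,\dots,t_k,t_{k+1},\dots,t_n)$ and $(t,t'_2,\dots,t'_k,t_{k+1},\dots,t_n)$. These share the same suffix $(t_{k+1},\dots,t_n)$. The idea is to show that the span of the first $k$ roots is the same for both, equal to the $\varphi_c$-orthogonal complement of the suffix roots. Concretely, set
\[U=\{v\in V\mid \varphi_c(v,\alpha_{t_j})=0 \text{ for all } j>k\}.\]
By Lemma~\ref{lem_form_hurwitz}, $\varphi_c(\alpha_{t_i},\alpha_{t_j})=0$ whenever $i\le k<j$, so $\alpha_{t_1},\dots,\alpha_{t_k}\in U$. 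The roots $\alpha_{t_{k+1}},\dots,\alpha_{t_n}$ are linearly independent, so the $n-k$ linear functionals $\varphi_c(-,\alpha_{t_j})$ are independent (because $\varphi_c$ is nondegenerate, its Gram matrix in a basis being unitriangular). Hence $\dim U=k$, and therefore $U=\bigoplus_{i\le k}\mathbb{R}\alpha_{t_i}$. Applying the same lemma to the second expression gives $\varphi_c(\alpha_t,\alpha_{t_j})=0$ for all $j>k$, so $\alpha_t\in U$, which is the required span.

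The main obstacle is that $U$ depends on the suffix, so the comparison must use a common complement. Choosing the complement $t_{k+1}\cdots t_n=x^{-1}c$, which is determined by $x$ alone, resolves this. One must also check that $\varphi_c$ is nondegenerate, and this is immediate from the unitriangular Gram matrix in the basis $\alpha_{s_1},\dots,\alpha_{s_n}$.
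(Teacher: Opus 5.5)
Your proof is correct and is essentially the paper's argument: both fix a $T$-reduced expression $t_{k+1}\cdots t_n$ of $x^{-1}c$, use Lemma~\ref{lem_form_hurwitz} to place every $\alpha_t$ with $t\leq_T x$ in the common kernel of the $n-k$ independent functionals $\varphi_c(-,\alpha_{t_j})$, $j>k$, and conclude by counting dimensions. The differences are cosmetic. You get independence of the roots from the unitriangular Gram matrix rather than directly from Hurwitz transitivity, and independence of the functionals from nondegeneracy of $\varphi_c$ rather than by successive evaluation; like the paper, you tacitly use the standard fact that each letter $t_i$ of a $T$-reduced expression of $x$ satisfies $t_i\leq_T x$.
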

	
	\begin{proof}
		Given a $T$-reduced expression $(t_1, t_2, \dots, t_k)$ of $x$, by transitivity of the Hurwitz action on $T$-reduced expressions of $c$ (Theorem~\ref{thm_hurw_trans}), we get that $\alpha_{t_1}, \dots, \alpha_{t_k}$ are linearly independent. We thus have $\dim(\M(x)) \geq \ell_T(x)=k$. We show that we have equality, which also shows the second statement. Let $y = x^{-1}c$ and let $t_{k+1} \cdots t_n$ be a $T$-reduced expression of $y$. By Lemma~\ref{lem_form_hurwitz}, for all $i \geq k+1$ and $t\leq_T x$ we have $\varphi_c(\alpha_t, \alpha_{t_i})= 0$: indeed since $t\leq _T x$, we can concatenate a $T$-reduced expression of $x$ in which $t$ appears and $t_{k+1} \cdots t_n$ to get a $T$-reduced expression of $c$, and then apply Lemma~\ref{lem_form_hurwitz}. It follows that $$\M(x) \subseteq \bigcap_{i=k+1}^n \ker(\varphi_c(-, \alpha_{t_i})).$$ But the linear forms $\varphi_c(-, \alpha_{t_i})$ are linearly independent: if $\sum_{i=k+1}^n \lambda_i \varphi_c(-, \alpha_{t_i}) = 0$, then evaluating on $\alpha_{t_{k+1}}$ and applying Lemma~\ref{lem_form_hurwitz} yields $\lambda_{k+1}= 0$, and then iterating one gets $\lambda_{k+1}=\lambda_{k+2}=\dots =\lambda_n=0$. We thus have that $$\dim(\M(x)) \leq \dim\left(\bigcap_{i=k+1}^n \ker(\varphi_c(-, \alpha_{t_i}))\right) = k = \ell_T(x),$$  yielding the second inequality. 
	\end{proof}
	
	\begin{rmq}[Hurwitz transitivity on prefixes]\label{hur_trans_pref}
	While the Hurwitz action is transitive on $T$-reduced expressions of Coxeter elements, for elements of $[1,c]_T$ the transitivity of the Hurwitz action is only conjectural in general. There is an even stronger conjecture, stating that $x$ is a Coxeter element in the reflection subgroup $P(x)$ (see Conjecture~\ref{conj_cox_el}), which would imply Hurwitz transitivity. Apart from finite Coxeter groups~\cite[Section 1.4]{Dual}, this is known to hold more generally in crystallographic Coxeter groups~\cite[Corollary 5.8]{HK} (see also~\cite[Theorem 3.22]{PS} for the affine types), with a proof using representation theory of hereditary algebras, and also in rank three Coxeter systems~\cite[Theorem~3.10]{DPS}. In those cases where the Hurwitz action is transitive on $T$-reduced expressions of $x$, one can easily derive a proof of Lemma~\ref{lem_wm_basic} without using $\varphi_c$ (but proving Hurwitz transitivity is difficult in general, and for proving Theorem~\ref{thm_mapinj}, we need Proposition~\ref{prop:ortho_y} below for which the form $\varphi_c$ is needed).     	
	\end{rmq}
	
	\begin{rmq}[More general absolute moved spaces]\label{rmq:genmov}
		As already pointed out, one may define $\M(x)$ for an arbitrary $x\in W$ exactly as we did for $x\in [1,c]_T$ but it will not be weel-behaved in general and the conclusions of Lemma~\ref{lem_wm_basic} will fail--see Example~\ref{ex_bad_behav_am} below. One can show (using a suitable Coxeter system in which the ambiant one is a standard parabolic subgroup) that Lemma~\ref{lem_wm_basic} can be extended to those elements $x \in W$ admitting a $T$-reduced expression whose factors yield a linearly independent set of roots\footnote{I thank Jean-Yves Hée for pointing this out to me}, but the construction is more involved and we shall not use it. In the following lemma we give an elementary argument in the case where the Tits form $B(-, -)$ is nondegenerate.  
	\end{rmq}
	
	\begin{lemma}
	Let $(W,S)$ be a Coxeter system and assume that the Tits form $B(-,-)$ is nondegenerate. Let $x = t_1 t_2 \cdots t_k$ be a $T$-reduced expression of an element $x\in W$ and assume that the roots $\{\alpha_{t_i}\}_{i=1}^k$ are linearly independent. Then $$\mathsf{Mov}(x) = \mathbb{R} \alpha_{t_1} \oplus \mathbb{R} \alpha_{t_2} \oplus \cdots \oplus \mathbb{R} \alpha_{t_k}.$$
	\end{lemma}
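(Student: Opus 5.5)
Since $x = t_1 t_2 \cdots t_k$, the inclusion $\mathsf{Mov}(x) \subseteq \mathbb{R}\alpha_{t_1} + \cdots + \mathbb{R}\alpha_{t_k}$ is automatic. Indeed, each $t_i - \mathrm{Id}_V$ has image $\mathbb{R}\alpha_{t_i}$, and we can telescope
$$x - \mathrm{Id}_V = \sum_{i=1}^k t_1\cdots t_{i-1}(t_i - \mathrm{Id}_V).$$
Alternatively, induct on $k$ using $x - \mathrm{Id} = t_1(x' - \mathrm{Id}) + (t_1 - \mathrm{Id})$ with $x' = t_2\cdots t_k$. The sum on the right is direct by the linear independence hypothesis, so its dimension is $k$. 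It therefore suffices to show $\dim \mathsf{Mov}(x) \geq k$, or equivalently $\dim V^x \leq n - k$ with $n = \dim V$.

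The plan is to use nondegeneracy of $B$ through orthogonality. Since each $t_i$ is $B$-orthogonal, $V^{t_i} = H_{t_i} = \alpha_{t_i}^{\perp}$. Let $U = \bigcap_{i=1}^k H_{t_i} = (\mathbb{R}\alpha_{t_1} \oplus \cdots \oplus \mathbb{R}\alpha_{t_k})^{\perp}$. Because $B$ is nondegenerate and the $\alpha_{t_i}$ are independent, the linear forms $B(\alpha_{t_i}, -)$ are independent, so $\dim U = n - k$. Clearly $U \subseteq V^x$. The key claim is the reverse inclusion $V^x \subseteq U$. Given that, $\dim V^x = n-k$, hence $\dim \mathsf{Mov}(x) = k$ by rank–nullity, and the inclusion above becomes an equality.

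To prove $V^x \subseteq U$, I would use the standard identity that for an orthogonal transformation $g$ (with respect to a nondegenerate form) one has $\mathsf{Mov}(g)^{\perp} = V^g$. Indeed, $B((g-\mathrm{Id})u, v) = B(u, (g^{-1}-\mathrm{Id})v)$ and $V^{g^{-1}} = V^g$, so $\mathsf{Mov}(g)^{\perp} = \ker(g^{-1} - \mathrm{Id}) = V^g$. Taking perpendiculars, which is involutive by nondegeneracy, this gives $\mathsf{Mov}(g) = (V^g)^{\perp}$ and $\dim\mathsf{Mov}(g) + \dim V^g = n$, consistent with rank–nullity. This alone does not give the bound, however. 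The real content is showing $V^x \subseteq \alpha_{t_i}^\perp$ for all $i$, i.e. that a vector fixed by $x$ is fixed by each $t_i$.

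I would argue by induction on $k$ using a sharper telescoping. Let $v \in V^x$ and write
$$0 = (x - \mathrm{Id})v = \sum_{i=1}^k t_1\cdots t_{i-1}(t_i v - v) = \sum_{i=1}^k -2B(\alpha_{t_i}, v)\, t_1 \cdots t_{i-1}(\alpha_{t_i}).$$
The vectors $\beta_i := t_1\cdots t_{i-1}(\alpha_{t_i})$ satisfy $\beta_i \in \alpha_{t_i} + \mathrm{span}(\alpha_{t_1},\dots,\alpha_{t_{i-1}})$. They are thus related to the $\alpha_{t_i}$ by a unitriangular change of basis, and hence are linearly independent. It follows that $B(\alpha_{t_i}, v) = 0$ for all $i$, i.e. $v \in U$. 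This closes the argument.

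The main obstacle is just this triangularity step. Note that it does not even use nondegeneracy; nondegeneracy is only needed to get $\dim U = n-k$, which is where the hypothesis on $B$ enters.
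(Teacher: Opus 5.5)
Your proof is correct and has the same structure as the paper's: use the linear independence of the roots to show $V^x=\bigcap_{i} H_{t_i}$, use nondegeneracy of $B$ to get $\dim \bigcap_i H_{t_i}=n-k$, and conclude by rank--nullity together with the obvious inclusion. The paper proves the first step iteratively: from $t_1(v)=t_2\cdots t_k(v)$ it deduces $t_1(v)=v$, then repeats. You instead do it in one step through the telescoping sum and the unitriangular vectors $\beta_i$, which amounts to the same argument.
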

	
	\begin{proof}
	We have $\mathsf{Mov}(x) \subseteq \mathbb{R} \alpha_{t_1} \oplus \mathbb{R} \alpha_{t_2} \oplus \cdots \oplus \mathbb{R} \alpha_{t_k}$. But $\dim(\mathsf{Mov}(x)) = \dim(V) - \dim (\ker(x - \mathrm{Id}_V))$. Let $v\in V$ such that $x(v) = v$. Then $t_1(v) = t_2 t_3 \cdots t_k(v)$ and since the $\alpha_{t_i}$'s are linearly independent, this forces $t_1(v) = v$. Repeating we get that $t_1(v) = t_2(v) = \cdots = t_k(v)$, hence that $\ker(x - \mathrm{Id}_V) = H_{t_1} \cap H_{t_2} \cap \cdots \cap H_{t_k}$. Since $B(-,-)$ is nondegenerate and the $\alpha_{t_i}$'s are linearly independent, we have $\dim( \bigcap_{i=1}^k H_{t_i}) = n-k$, hence $\dim(\mathsf{Mov}(x)) = k$, which concludes the proof.  
	\end{proof}
	
	\begin{exple}\label{ex_bad_behav_am}
		We give examples of Coxeter groups and elements $x$ for which part of the conclusion of Proposition~\ref{lem_wm_basic} fails. In these examples we define $\M(x)$ as in Definition~\ref{def:wms} even if $x$ is not in a noncrossing partition poset. 
		\begin{enumerate}
			\item It has been shown by Duszenko~\cite{Dus} that for irreducible, non-finite and non-affine Coxeter systems, the reflection length is unbounded. Taking such a Coxeter system $(W, S)$ and $x\in W$ such that $\ell_T(x) > |S|=\dim(V)$ (which can occur even in affine type), we cannot have $\dim(\M(x)) = \ell_T(x)$. 
			\item Let $W$ be of type $\widetilde{A_3}$, with generators $s, t, u, v$ and $m_{st} = m_{tu} = m_{uv} = m_{vs}=3$, $m_{su}=m_{tv}=2$. Let $x= (sts)(tut)(uvu)(vsv)$. Using Theorem~\ref{thm_dyer_ref_length} one checks that $\ell_T(x)=4$. But $\{ \alpha_{sts}, \alpha_{tut}, \alpha_{uvu}, \alpha_{vsv}\}$ span the three-dimensional subspace $U=\{ \sum_{r\in S} \lambda_r \alpha_r \ \vert \ \lambda_s + \lambda_u = \lambda_t + \lambda_v\}$. One also checks that $x = t (stuts) (uvsvu) v$, and the roots $\{\alpha_{t}, \alpha_{stuts}, \alpha_{uvsvu}, \alpha_{v}\}$ span the three-dimensional subspace $U' = \{ \sum_{r\in S} \lambda_r \alpha_r \ \vert \ \lambda_s = \lambda_u\}$, distinct from $U$. Since $U + U' = V$ we have $\M(x) = V$ which has dimension four, but none of the above two $T$-reduced expressions have their corresponding roots spanning $V$.  
		\end{enumerate}
	\end{exple}
	
	\begin{prop}\label{prop:ortho_y}
		Let $x\in [1, c]_T$. Let $y = x^{-1} c$. Let $t_1t_2\cdots t_k$ be a $T$-reduced expression of $x$. Then $$\M(y) = \bigcap_{i=1}^k  \ker(\varphi_c(\alpha_{t_i}, -))=\{v\in V \ \vert \ \varphi_c(u, v) = 0 \ \forall u \in \M(x)\}.$$
	\end{prop}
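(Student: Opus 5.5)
The plan is a dimension count, dual to the proof of Proposition~\ref{lem_wm_basic}. Set $n=|S|$ and $K:=\bigcap_{i=1}^k \ker(\varphi_c(\alpha_{t_i},-))$. First, by Proposition~\ref{lem_wm_basic}, $\M(x)=\bigoplus_{i=1}^k \mathbb{R}\alpha_{t_i}$. Hence a vector $v$ satisfies $\varphi_c(u,v)=0$ for all $u\in\M(x)$ if and only if $\varphi_c(\alpha_{t_i},v)=0$ for every $i$. This gives the second equality at once, by bilinearity.

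For the first equality I would show that $\M(y)\subseteq K$ and that both spaces have dimension $n-k$.

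\textbf{Inclusion.} Let $t\in T$ with $t\leq_T y$. Choose a $T$-reduced expression of $y$ beginning with $t$, say $t\,t'_{2}\cdots t'_{n-k}$, which exists since $t\leq_T y$. Concatenating with $t_1\cdots t_k$ gives a word $t_1\cdots t_k\,t\,t'_2\cdots t'_{n-k}$ representing $xy=c$, of length $n=\ell_T(c)$, so it is a $T$-reduced expression of $c$. In it every $t_i$ appears before $t$, so Lemma~\ref{lem_form_hurwitz} gives $\varphi_c(\alpha_{t_i},\alpha_t)=0$ for all $i$. Thus $\alpha_t\in K$, and therefore $\M(y)\subseteq K$.

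\textbf{Dimensions.} Here $y\in[1,c]_T$, since $\ell_T(x)+\ell_T(y)=\ell_T(c)$ and $y$ is a suffix, so $y\leq_T c$ as noted in the preliminaries. Proposition~\ref{lem_wm_basic} then gives $\dim\M(y)=\ell_T(y)=n-k$.

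For $K$, I would show that the linear forms $\varphi_c(\alpha_{t_i},-)$, $1\leq i\leq k$, are linearly independent. This uses the triangularity of Lemma~\ref{lem_form_hurwitz} applied to a $T$-reduced expression $(t_1,\dots,t_k,t_{k+1},\dots,t_n)$ of $c$. Suppose $\sum_{i}\lambda_i\varphi_c(\alpha_{t_i},-)=0$.
\begin{itemize}
\item Evaluating at $\alpha_{t_k}$ kills every term with $i>k$ or $i<k$ except possibly those with $i>k$, and there are none in the sum. Since $\varphi_c(\alpha_{t_i},\alpha_{t_k})=0$ for $i<k$ and $\varphi_c(\alpha_{t_k},\alpha_{t_k})=1$, we get $\lambda_k=0$.
\item Evaluating next at $\alpha_{t_{k-1}}$ gives $\lambda_{k-1}=0$ in the same way, and downward induction gives $\lambda_i=0$ for all $i$.
\end{itemize}
Hence $\dim K=n-k$, and the inclusion $\M(y)\subseteq K$ between spaces of equal dimension is an equality.

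I do not expect a real obstacle. The only point needing care is the ordering convention in Lemma~\ref{lem_form_hurwitz}: $\varphi_c(\alpha_{t_i},\alpha_{t_j})=0$ for $i<j$. The $t_i$ from $x$ come first, so they must sit in the left argument, and the triangular evaluation must proceed from $t_k$ downward rather than upward as in Proposition~\ref{lem_wm_basic}.
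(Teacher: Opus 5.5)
Your proof is correct and follows essentially the paper's argument. Both show the inclusion $\M(y)\subseteq\bigcap_i\ker(\varphi_c(\alpha_{t_i},-))$ via Lemma~\ref{lem_form_hurwitz} on a concatenated $T$-reduced expression of $c$, then match dimensions using the linear independence of the forms $\varphi_c(\alpha_{t_i},-)$; your downward evaluation correctly fills in the paper's ``one argues as in'' step. Deriving the second equality directly from $\M(x)=\bigoplus_i\mathbb{R}\alpha_{t_i}$ and bilinearity is a slightly quicker route than the paper's separate two-inclusion argument, but it rests on the same idea.
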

	
	\begin{proof}
		Let $t_{k+1}\cdots t_n$ be a $T$-reduced expression of $y$. Then by the previous lemma $(\alpha_{t_{k+1}}, \dots, \alpha_{t_n})$ forms a basis of $\M(y)$. Since $t_1 t_2 \cdots t_n$ is a $T$-reduced expression of $c$, for all $j \geq k+1$ and all $i \leq k$ by Lemma~\ref{lem_form_hurwitz} we have $\varphi_c(\alpha_{t_i}, \alpha_{t_j})=0$. This shows that $\M(y) \subseteq \bigcap_{i=1}^k  \ker(\varphi_c(\alpha_{t_i}, -))$. We get the reversed inclusion by comparing dimensions: we have $\dim(\M(y))= \ell_T(y) = n-k$, while the linear forms $\varphi_c(\alpha_{t_i}, -)$ are linearly independent (one argues as in the proof of Proposition~\ref{lem_wm_basic}). This yields that $\dim(\bigcap_{i=1}^k  \ker(\varphi_c(\alpha_{t_i}, -))) = n-k$. We thus have the first equality of sets. 
		
		Let $v \in \M(y)$. Then keeping the above notation, we have that $v$ is a linear combination of $\alpha_{t_{k+1}}, \dots, \alpha_{t_n}$, while if $u\in \M(x)$ then $u$ is a linear combination of $\alpha_{t_1}, \dots, \alpha_{t_k}$. By Lemma~\ref{lem_form_hurwitz} this yields that $\varphi_c(u, v) = 0$. Conversely, let $v\in V$ such that $\varphi_c(u,v) = 0$ for all $u\in \M(x)$, and let $\lambda_i\in \mathbb{R}$ such that $v = \sum_{i=1}^n \lambda_i \alpha_{t_i}$. We have $\alpha_{t_1} \in \M(x)$, hence $$ 0 = \varphi_c(\alpha_{t_1}, v) = \sum_{i=1}^n \lambda_i \varphi_c( \alpha_{t_1}, \alpha_{t_i}) = \lambda_1,$$
		and iterating with $\alpha_{t_2}, \dots, \alpha_{t_k}$ yields $\lambda_1 = \lambda_2 = \dots = \lambda_k = 0$. Hence $v$ is a linear combination of $\alpha_{t_{k+1}}, \dots, \alpha_{t_n}$, and thus $v\in \M(y)$.   
	\end{proof}
	
	\begin{cor}\label{cor_det}
		Let $x\in [1, c]_T$. Let $y = x^{-1} c$. Then $\M(y)$ is determined by $\M(x)$. 
	\end{cor}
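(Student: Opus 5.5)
This is immediate from the second equality in Proposition~\ref{prop:ortho_y}. That equality expresses $\M(y)$ as
$$\M(y)=\{v\in V \mid \varphi_c(u,v)=0 \ \forall u\in \M(x)\},$$
the right orthogonal of $\M(x)$ with respect to the Euler form $\varphi_c$. The plan is therefore to observe that this right-hand side is a function of the subspace $\M(x)$ alone, once $c$ is fixed.

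First I will recall that $\varphi_c$ depends only on $c$, as noted after its definition, and not on $x$, $y$, or any chosen $T$-reduced expression. The right $\varphi_c$-orthogonal $U^{\perp_c}:=\{v\in V\mid \varphi_c(u,v)=0\ \forall u\in U\}$ is thus a well-defined map $\mathrm{Sub}(V)\to\mathrm{Sub}(V)$.

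Next I will apply Proposition~\ref{prop:ortho_y}, which gives $\M(y)=\M(x)^{\perp_c}$. Now let $x,x'\in[1,c]_T$ with $\M(x)=\M(x')$, and set $y=x^{-1}c$ and $y'=x'^{-1}c$. Then
$$\M(y)=\M(x)^{\perp_c}=\M(x')^{\perp_c}=\M(y').$$
This is exactly the claim that $\M(y)$ is determined by $\M(x)$.

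There is no real obstacle here; all the substance, namely the dimension count and the vanishing of $\varphi_c$ across a $T$-reduced expression of $c$, is already contained in Proposition~\ref{prop:ortho_y}. The only point needing care is that the description there uses the subspace $\M(x)$ rather than a particular $T$-reduced expression. The first equality in Proposition~\ref{prop:ortho_y} is phrased via the $t_i$, but the second one involves only $\M(x)$, and that is the one the argument uses.
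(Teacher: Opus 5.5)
Your proposal is correct and follows essentially the same approach as the paper: the paper's proof is the one-line observation that $\M(y)=\{v\in V \mid \varphi_c(u,v)=0 \ \forall u\in \M(x)\}$ by Proposition~\ref{prop:ortho_y}, and this right $\varphi_c$-orthogonal depends only on $\M(x)$ once $c$ is fixed. Your explicit reformulation ($\M(x)=\M(x')\Rightarrow \M(y)=\M(y')$) is the form in which the corollary is used in the proof of Theorem~\ref{thm_inj_subspaces}.
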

	
	\begin{proof}
		This is immediate with the equality $$\M(y) =\{v\in V \ \vert \ \varphi_c(u, v) = 0 \ \forall u \in \M(x)\}.$$
	\end{proof}
	
	\section{Various injective maps from noncrossing partition posets}\label{sec:inj}
	
	\subsection{Subspaces of the geometric representation}\label{sec:subspaces}

	The aim of this subsection is to show the first point of Theorem~\ref{thm_mapinj}: 
	
	\begin{theorem}\label{thm_inj_subspaces}
		Let $(W,S)$ be a Coxeter system and $c\in W$ a Coxeter element. The map $$[1,c]_T \longrightarrow \mathrm{Sub}(V), \ x \mapsto \M(x),$$ is order-preserving and injective. 
	\end{theorem}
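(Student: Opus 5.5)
Order-preservation is immediate from the definition. If $x \leq_T y \leq_T c$ and $t \in T$ satisfies $t \leq_T x$, then $t \leq_T y$ by transitivity of $\leq_T$. Hence every spanning root $\alpha_t$ of $\M(x)$ is also a spanning root of $\M(y)$, and $\M(x) \subseteq \M(y)$.

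For injectivity, I will reduce to the recovery of an element from its Kreweras complement. Suppose $x, x' \in [1,c]_T$ satisfy $\M(x) = \M(x')$. Set $y = x^{-1}c$ and $y' = x'^{-1}c$. By Corollary~\ref{cor_det}, $\M(y) = \M(y')$, since both equal the right $\varphi_c$-orthogonal of the common space. Proposition~\ref{lem_wm_basic} then gives $\ell_T(x) = \ell_T(x')$ and $\ell_T(y) = \ell_T(y')$. The key point to show is that an element $x \in [1,c]_T$ can be reconstructed from the pair $(\M(x), \M(x^{-1}c))$, or even from $\M(x)$ alone.

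My approach is to express $x$ directly in terms of $\varphi_c$ restricted to $\M(x)$. Fix a $T$-reduced expression $t_1 \cdots t_k$ of $x$, completed to a reduced expression $t_1 \cdots t_n$ of $c$. By Lemma~\ref{lem_form_hurwitz}, the Gram matrix of $\varphi_c$ in the basis $(\alpha_{t_1}, \dots, \alpha_{t_k})$ of $\M(x)$ is unitriangular, with lower entries $2B(\alpha_{t_i}, \alpha_{t_j})$. This is the classical Coxeter-element situation: for reflections with roots $\beta_i$ and Euler-type form $E$ satisfying $E(\beta_i, \beta_j) = 0$ for $i < j$, $E(\beta_i,\beta_i)=1$, and $E(\beta_i,\beta_j)=2B(\beta_i,\beta_j)$ for $i > j$, the product $t_1 \cdots t_k$ should admit the formula
\[
t_1\cdots t_k(v) = v - \sum_{i,j} (G^{-1})_{ij}\, 2B(\alpha_{t_j}, v)\, \alpha_{t_i},
\]
where $G$ is the Gram matrix of $\varphi_c$ (up to transposition). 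This is the standard fact that, for a unitriangular Euler form, the product of the reflections equals $\mathrm{Id} - (\text{Gram})^{-1}$-type expression.

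In basis-free terms, I expect $x(v) = v - \pi(v)$, where $\pi(v) \in \M(x)$ is the unique vector satisfying $\varphi_c(u, \pi(v)) = 2B(u, v)$ for all $u \in \M(x)$, or possibly with the arguments of $\varphi_c$ in the other order. This $\pi(v)$ is well defined because $\varphi_c$ restricted to $\M(x)$ is nondegenerate, its Gram matrix being unitriangular. The resulting expression depends only on the subspace $\M(x)$, together with $\varphi_c$ and $B$, which are fixed by $c$. Hence $\M(x) = \M(x')$ forces $x = x'$.

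The main step to verify is this formula, which I will prove by induction on $k$. For $k=1$ it reads $t(v) = v - 2B(\alpha_t, v)\alpha_t$, which holds since $\varphi_c(\alpha_t, \alpha_t) = 1$. For the induction step, write $x = x_1 t_k$ with $x_1 = t_1 \cdots t_{k-1}$. Then compose the two operators and use the block-triangular structure of the Gram matrix, whose last row has entries $2B(\alpha_{t_k}, \alpha_{t_j})$ and whose last column is $(0, \dots, 0, 1)$, to check that the composite matches the formula for the extended subspace. The only real obstacle is bookkeeping the orientation, namely deciding whether $\varphi_c(u, \cdot)$ or $\varphi_c(\cdot, u)$ is the right choice. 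This is settled by the rank-two computation.

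Should the formula prove awkward, the fallback is to use $\M(y) = \M(y')$ in the same way, so that the pair $(\M(x), \M(y))$ recovers $x$.
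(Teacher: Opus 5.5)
Your proof is correct, and it takes a genuinely different route from the paper.

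\textbf{The two routes.} The paper never expresses $x$ through $\M(x)$. It uses Corollary~\ref{cor_det} to get $\M(y)=\M(y')$. It then observes that $x^{-1}x'=yy'^{-1}$ is both a product of reflections with roots in $\M(x)$ and a product of reflections with roots in $\M(y)$, so its moved space lies in $\M(x)\cap\M(y)=\{0\}$. You instead prove a relative Coxeter-transformation formula $x=\mathrm{Id}_V-\pi_{\M(x)}$. This needs only Lemma~\ref{lem_form_hurwitz} and Proposition~\ref{lem_wm_basic}, so your appeal to Corollary~\ref{cor_det} and to the Kreweras complement is superfluous.

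\textbf{The orientation question.} A direct unwinding settles it faster than an induction. Write $t_1\cdots t_k(v)=v-\sum_i c_i\alpha_{t_i}$ and apply $t_k$ first, then $t_{k-1}$, and so on. This gives $c_j+\sum_{i>j}2B(\alpha_{t_j},\alpha_{t_i})c_i=2B(\alpha_{t_j},v)$. By Lemma~\ref{lem_form_hurwitz}, this says exactly $\varphi_c(\pi(v),\alpha_{t_j})=2B(\alpha_{t_j},v)$. So $\pi(v)$ is characterized by $\varphi_c(\pi(v),u)=2B(u,v)$ for all $u\in\M(x)$, which is your second option. You should also say explicitly that equality of the two linear maps gives $x=x'$ because the geometric representation is faithful; the paper uses this implicitly too.

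\textbf{What each route buys.} The paper's argument is shorter and reuses Proposition~\ref{prop:ortho_y}, which it needs anyway. Yours reconstructs $x$ explicitly from $\M(x)$. For $x=c$ it recovers the classical identity $\varphi_c(c(v),u)=-\varphi_c(u,v)$, since $\varphi_c(u,v)+\varphi_c(v,u)=2B(u,v)$. As a byproduct it gives $\mathsf{Mov}(x)=\mathrm{Im}(\pi_{\M(x)})$, of dimension $\ell_T(x)-\dim\bigl(\M(x)\cap\{w\in V\mid B(w,V)=0\}\bigr)$. This explains the discrepancy in Example~\ref{ex_a1_tilde}.
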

	
	\begin{proof}
		Let $x, y\in [1,c]_T$ such that $x \leq_T y$. Then there is a $T$-reduced expression $t_1 t_2 \cdots t_k$ of $y$ such that $t_1 \cdots t_i$ is a $T$-reduced expression of $x$ for some $i=0, \dots, k$. We then have $$ \M(x) = \bigoplus_{j=1}^i \mathbb{R} \alpha_{t_j} \subseteq \bigoplus_{j=1}^k \mathbb{R} \alpha_{t_j} = \M(y).$$ The map is thus order-preserving. 
		\medskip
		
		We now show that it is injective. Assume that $x, x'\in  [1, c]_T$ are such that $\M(x) = \M(x')$. Let $y=x^{-1}c$, $y'=x'^{-1}c$. By Corollary~\ref{cor_det}, we have $\M(y) = \M(y')$. Let $t_1 t_2 \cdots t_n$ be a $T$-reduced expression of $c$ such that $t_1 t_2 \cdots t_i$ is a $T$-reduced expression of $x$ for some $i=0, \dots, n$ (and hence $t_{i+1} \cdots t_n$ is a $T$-reduced expression of $y$). We then have $\M(x)\oplus \M(y)=V$ and $$\M(x) = \M(x') = \bigoplus_{j=1}^i \mathbb{R} \alpha_{t_j}, \ \M(y) = \M(y') = \bigoplus_{j=i+1}^n \mathbb{R} \alpha_{t_j}.$$
		We have $xy = c = x'y'$, hence $x^{-1} x' = y y'^{-1}$. Now all $T$-reduced expressions of $y$ and $y'$ have their factors having their corresponding roots in $\M(y)=\M(y')$, and $T$-reduced expressions of $x$ and $x'$ have their factors having their corresponding roots in $\M(x)=\M(x')$. This means that $x^{-1} x'$ (respectively $y y'^{-1}$) can be written as a (not necessarily reduced) product of reflections whose roots all lie in $\M(x)$ (resp. in $\M(y)$). We thus have $\mathrm{Im}(x^{-1}x' - \mathrm{Id}_V) \subseteq \M(x)$ and $\mathrm{Im}(y y'^{-1} - \mathrm{Id}_V) \subseteq \M(y)$. Since $x^{-1}x'= y y'^{-1}$ and $\M(x) \cap \M(y) = \{0_V\}$, this forces $\mathrm{Im}(x^{-1}x' - \mathrm{Id}_V) = \{0_V\}$, hence $x^{-1} x'=1$. We thus get $x=x'$. 
	\end{proof}
	
	\begin{exple}\label{ex:fail_poset}
		When $W$ is finite, we have $\mathsf{Mov}(x)=\M(x)$ and the map $x \mapsto \M(x)$ is an isomorphism of posets onto its image, as a corollary of the main result of~\cite{BW_ortho} (see Proposition~\ref{prop_bw}~(2)). Unfortunately, this does not hold in general. As a counterexample (inspired from~\cite[Remark 2.13]{RS}), consider $W$ of type $\widetilde{A_3}$, with generators $s_1, s_2, s_3, s_4$ and $m_{12} = m_{23} = m_{34} = m_{41}=3$, $m_{13}=m_{24}=2$. Let $c= s_1 s_2 s_3 s_4$. We consider the following two $T$-reduced expressions of $c$: \begin{align*}s_1 s_2 s_3 s_4 & = (s_1 s_2 s_1) (s_1 s_4 s_1) s_1 (s_4 s_3 s_4)\\
			&= (s_1 s_2 s_1) (s_1 s_4 s_1) (s_4 s_3 s_4) (s_3 s_4 s_1 s_4 s_3),\\
			s_1 s_2 s_3 s_4 &= s_1 (s_2 s_3 s_2) s_2 s_4 = (s_2 s_3 s_2) (s_3 s_2 s_1 s_2 s_3) s_2 s_4. 
		\end{align*} 
		We thus have that $x= (s_1 s_2 s_1) (s_1 s_4 s_1) (s_4 s_3 s_4) \leq_T c$ and $t= s_2 s_3 s_2 \leq_T c$. We have $\ell_T(x) = 3$ and $$\M(x) = \mathbb{R} (\alpha_1 + \alpha_2) \oplus \mathbb{R} (\alpha_1 + \alpha_4 ) \oplus \mathbb{R} (\alpha_3 + \alpha_4) = \left\{ \sum_{i=1}^4 a_i \alpha_i \in V \mid a_1 + a_3 = a_2 + a_4\right\},$$
		and $$\M(t) = \mathbb{R} (\alpha_2 + \alpha_3) \subseteq \M(x),$$
		but one checks (for instance using Theorem~\ref{thm_dyer_ref_length}) that $t \not\leq_T x$. 
	\end{exple}
	
	While Example~\ref{ex:fail_poset} gives an example of a Coxeter group of rank four where the map $x \mapsto \M(x)$ is not an isomorphism of posets onto its image, the property trivially holds in rank $2$, and the following result shows that it also holds in rank $3$: 
	
	\begin{prop}\label{prop_poset_rk3}
	Let $(W,S)$ be a Coxeter system of rank $3$, and $c\in W$ a Coxeter element. The map $$[1,c]_T \longrightarrow \mathrm{Sub}(V), \ x \mapsto \M(x),$$ is an isomorphism of posets onto its image.   
\end{prop}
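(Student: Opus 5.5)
By Theorem~\ref{thm_inj_subspaces}, the map $x\mapsto\M(x)$ is already injective and order-preserving. It therefore remains to show that if $x,x'\in[1,c]_T$ satisfy $\M(x)\subseteq\M(x')$, then $x\leq_T x'$. Since $\dim V=3$ and $\dim\M(w)=\ell_T(w)$ by Proposition~\ref{lem_wm_basic}, I will split into cases according to $(\ell_T(x),\ell_T(x'))$.

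\textbf{Trivial cases.} If $\ell_T(x)=\ell_T(x')$, the inclusion is an equality, so injectivity gives $x=x'$. If $\ell_T(x)=0$, then $x=1$. If $\ell_T(x')=3$, then $\M(x')=V=\M(c)$, so $x'=c$ by injectivity and $x\leq_T c$ holds by hypothesis.

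\textbf{The case $\ell_T(x)=1$, $\ell_T(x')=2$.} Here $x=t$ is a reflection with $\alpha_t\in\M(x')$. Let $y'=x'^{-1}c$, so $\ell_T(y')=1$ and $y'=r$ for some reflection $r$. Proposition~\ref{prop:ortho_y} gives
\[
\M(x')=\{v\in V \mid \varphi_c(v,\alpha_r)=0\},
\]
the left kernel of $\varphi_c(-,\alpha_r)$, by the symmetric version of the argument used there (this is exactly how it appears in the proof of Proposition~\ref{lem_wm_basic}). Thus $\varphi_c(\alpha_t,\alpha_r)=0$.

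I want to conclude that $tr\leq_T c$ and that $c=t\,u\,r$ for some reflection $u$, which gives $t\leq_T tu=x'$. One has $x'=cr$ and $t\,x'=tcr$, so I need $\ell_T(tcr)=1$, i.e.\ $\ell_T(tc)=2$ with $r\leq_T tc$. The first part is automatic, since $t\leq_T c$ (as $t\leq_T x$ and $x\leq_T c$) gives $\ell_T(tc)=2$. So the real content is to show that $r\leq_T tc$.

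I will attack this via Hurwitz moves. Write $c=t\,a\,b$ with $a,b\in T$, which is possible since $t\leq_T c$. Also $c=p\,q\,r$ for a $T$-reduced expression $pq$ of $x'$. By Theorem~\ref{thm_hurw_trans}, the element $ab=tc$ is a product of two reflections, and the reflections lying below a product of two reflections in rank-two behaviour form a dihedral reflection subgroup $\langle a,b\rangle$. So the claim reduces to showing that $\alpha_r\in\mathbb{R}\alpha_a\oplus\mathbb{R}\alpha_b=\M(tc)$. By Proposition~\ref{prop:ortho_y} applied to the prefix $t$, this space equals $\{v \mid \varphi_c(\alpha_t,v)=0\}$. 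Hence the condition to verify is $\varphi_c(\alpha_t,\alpha_r)=0$, whereas what I have is $\varphi_c(\alpha_r,\alpha_t)$-type vanishing, in the wrong order. This ordering mismatch is the main obstacle.

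To handle it, I will use Lemma~\ref{lem_form_hurwitz} on $c=pqr$: since $r$ comes last, $\varphi_c(v,\alpha_r)=0$ for $v\in\M(x')$, and this gives the vanishing with $\alpha_r$ in the second slot, i.e.\ $\varphi_c(\alpha_t,\alpha_r)=0$. This is precisely the condition needed, because $\M(tc)=\ker\varphi_c(\alpha_t,-)$. Hence $\alpha_r\in\M(tc)$. Then, using that in a dihedral reflection subgroup every reflection whose root lies in the plane and is below $c$-compatibly is a prefix of $ab$ (using rank-two Hurwitz transitivity), we get $r\leq_T ab$. Therefore $x'=t\cdot(\text{reflection})$, and so $t\leq_T x'$.

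If the dihedral step is delicate, I will instead argue directly: $c=pqr$ and $c=tab$ both lie in $[1,c]_T$. The element $tr^{-1}\cdots$ can be compared through $\M$: the element $z=cr$ has $\M(z)=\M(x')\ni\alpha_t$. Alternatively, the dual Kreweras complement $c\,x^{-1}$ of $t$ has $\M$ equal to $\ker\varphi_c(-,\alpha_t)$, and one shows that $r\leq_T ct$ symmetrically, using Corollary~\ref{cor_det} and injectivity on length-two elements.
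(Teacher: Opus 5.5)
Your trivial cases are fine, and so are the two $\varphi_c$ computations. The identity $\M(x')=\ker\varphi_c(-,\alpha_r)$ gives $\varphi_c(\alpha_t,\alpha_r)=0$, which is exactly the condition $\alpha_r\in\ker\varphi_c(\alpha_t,-)=\M(tc)$. So the ``ordering mismatch'' you worry about is not there.

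\textbf{The detour is circular.} Both $r=x'^{-1}c$ and $tc$ lie in $[1,c]_T$, with reflection lengths $1$ and $2$. You have therefore only traded ``$\M(t)\subseteq\M(x')$ implies $t\leq_T x'$'' for ``$\M(r)\subseteq\M(tc)$ implies $r\leq_T tc$''. This is the same statement for another pair: it is the image of the original under the order-reversing bijection $w\mapsto w^{-1}c$ of $[1,c]_T$.

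\textbf{The dihedral step is unproved.} All the content is in that step, and the justification you sketch (rank-two Hurwitz transitivity) cannot supply it. Hurwitz moves on $(a,b)$ only produce reflections of $\langle a,b\rangle$. What you know about $r$ relative to $a$ and $b$ is only that $\alpha_r$ lies in the plane $\mathbb{R}\alpha_a\oplus\mathbb{R}\alpha_b$. Nothing in your argument puts $r$ in a dihedral group together with $a$ and $b$.

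\textbf{The fallback fails.} Its key claim $r\leq_T ct$ is false in general. Take $\widetilde{A}_2$ with $c=s_1s_2s_3$, $t=s_1$ and $x'=s_1s_2$. Then $r=s_3$, and $\varphi_c(\alpha_{s_3},\alpha_{s_1})=2B(\alpha_{s_3},\alpha_{s_1})=-1\neq 0$. Hence $\alpha_r\notin\ker\varphi_c(-,\alpha_t)=\M(ct)$, and so $r\not\leq_T ct$.

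\textbf{The missing ingredient} is a genuine fact about root systems, \cite[Remark 3.2]{Dyer_Bruhat}: for distinct reflections $p,q$, the set $\Phi\cap(\mathbb{R}\alpha_p\oplus\mathbb{R}\alpha_q)$ is the root system of a dihedral reflection subgroup $W'$ containing $p$ and $q$. Hence any reflection with root in that plane lies in $W'$. The reflections of a dihedral Coxeter group are exactly its elements of odd length with respect to its canonical generators. So a product of three reflections of $W'$ is again a reflection.

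Once you have this, there is no need to pass through $r$ and $tc$. Write $x'=pq$. The hypothesis $\alpha_t\in\M(x')=\mathbb{R}\alpha_p\oplus\mathbb{R}\alpha_q$ gives $t\in W'$, so $t':=tpq\in T$ and $x'=t\,t'$, i.e.\ $t\leq_T x'$. This is exactly the paper's proof.
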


\begin{proof}
By Theorem~\ref{thm_inj_subspaces}, we already know that the map is order-preserving and injective. Hence let $u, v\in [1,c]_T$ such that $\M(u) \subseteq \M(v)$. We must show that $u \leq_T v$. If $u=1$ or $v=c$, this trivially holds. If $\M(u) = \M(v)$, then by Theorem~\ref{thm_inj_subspaces} we have $u=v$. We can thus assume that $u \neq v$, and that $\dim(\M(u))=\ell_T(u) = 1$, $\dim(\M(v))=\ell_T(v)=2$. It follows that $\M(u)$ is a line included in the plane $\M(v)$. Since $\ell_T(u) = 1$, we have $u\in T$, hence $\M(u) = \mathbb{R} \alpha_u$. Letting $t_1 t_2$ be a $T$-reduced expression of $v$, we have $\alpha_u \in \M(v) = \mathbb{R} \alpha_{t_1} \oplus \mathbb{R} \alpha_{t_2}$. By~\cite[Remark 3.2]{Dyer_Bruhat}, $\Phi \cap \M(v)$ is the set of roots of a dihedral reflection subgroup of $W$. We thus have that $u, t_1, t_2$ are reflections in a common dihedral reflection subgroup of $W$, with $t_1 \neq t_2$. This forces $ u t_1 t_2$ to be a reflection $t'$ of this subgroup, hence of $W$. We thus have $v = t_1 t_2 = u t'$, showing that $u \leq_T v$. 
\end{proof}
	
	\subsection{Reflection subgroups}
	
	Let $(W,S)$ be a Coxeter system and $x\in W$. Recall the notation $$P(x):= \langle t\in T \mid t\leq_T x \rangle.$$
	When $W$ is finite, this subgroup is parabolic, whence the notation. In general $P(x)$ is not a parabolic subgroup of $W$, but only a reflection subgroup (see Example~\ref{ex_nonparab}). 
	
	\medskip
	
	Before proving our main result we collect some properties of the subgroups $P(x)$ that we shall use later on. 
	
	\begin{lemma}\label{lemm_p_prop}
	Let $(W,S)$ be a Coxeter system and $c\in W$ a Coxeter element. 
	\begin{enumerate}
		\item We have $P(c) = W$; more generally, if $c'$ is a standard parabolic Coxeter element such that $c' \leq_T c$, then $P(c')$ is the standard parabolic subgroup $\langle \mathrm{supp}(c') \rangle$, where $\mathrm{supp}(c')$ denotes the set of those elements in $S$ occurring in any $S$-reduced expression of $c'$,
		\item Let $x \in [1,c]_T$ and $w\in W$ such that $w x w^{-1} \leq_T c$. Then $P(w x w^{-1}) = w P(x) w^{-1}$,
		\item If $P(x)$ is finite, then $P(x)$ is the parabolic closure of $x$,
		\item If $t, t'\in T$, $t\neq t'$, such that $tt' \leq_T c$, then $P(tt')$ is the unique maximal dihedral reflection subgroup containing $t$ and $t'$. In this case we have $P(tt') \cap T = \{q \in T \mid q \leq_T tt'\}$.  
	\end{enumerate}
	\end{lemma}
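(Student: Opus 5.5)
I would prove the four points separately, using mostly Theorem~\ref{thm_cox_proper}, the absolute moved spaces from Section~\ref{sec:abs}, and Dyer's theory of reflection subgroups.

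For point (1), every $s\in S$ satisfies $s\leq_T c$, because deleting a letter from the $S$-reduced word of $c$ is a $T$-reduction. Hence $P(c)\supseteq\langle S\rangle=W$. For a standard parabolic Coxeter element $c'\leq_T c$ with support $I$, the same argument gives $I\subseteq T(c')$, so $W_I\subseteq P(c')$. For the reverse inclusion I would show that every $t\leq_T c'$ lies in $W_I$. The cleanest route is via $\M$: by Proposition~\ref{lem_wm_basic}, $\M(c')=\bigoplus_{s\in I}\mathbb{R}\alpha_s$, and $t\leq_T c'$ gives $\alpha_t\in\M(c')$. A root lying in the span of $\{\alpha_s\}_{s\in I}$ is a root of $W_I$, since positive roots have coefficients of constant sign and roots supported in $I$ are exactly those of $W_I$. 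Therefore $t\in W_I$.

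For point (2), conjugation preserves $\ell_T$, so $t\leq_T x$ if and only if $wtw^{-1}\leq_T wxw^{-1}$. Thus $T(wxw^{-1})=wT(x)w^{-1}$, and the claim follows by taking generated subgroups. The hypothesis $wxw^{-1}\leq_T c$ is only there so that both sides are defined within the setting of the paper.

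For point (3), assume $P(x)$ is finite. A finite reflection subgroup is contained in a finite parabolic subgroup, namely its parabolic closure $Q$. Take $Q=uW_Ju^{-1}$, the parabolic closure of $x$ itself, and work in the finite Coxeter group $Q$ with its reflection set $Q\cap T$. A $T$-reduced expression of $x$ consists of reflections $t_i\leq_T x$, so $x\in P(x)$. For the converse inclusion $P(x)\subseteq Q$, I would use the following facts:
\begin{itemize}
\item reflection length in a parabolic subgroup agrees with $\ell_T$ (Dyer's result that $\ell_T$ restricts correctly to parabolic subgroups);
\item consequently every $t\leq_T x$ lies in $Q$.
\end{itemize}
This gives $P(x)\subseteq Q$. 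Then, inside the finite group $Q$, $P(x)$ is the parabolic closure of $x$ by the finite-case discussion of Section~\ref{various}, and a parabolic subgroup of $Q$ is parabolic in $W$. The main obstacle is justifying that each $t\leq_T x$ lies in the parabolic closure. I would prove this by arguing that the roots $\alpha_t$ lie in $\M(x)$, that $\Phi\cap\M(x)$ is contained in the root subsystem of the parabolic closure, and by using the fact that $\mathsf{Mov}(x)\subseteq\M(x)$ has the same dimension once the group is finite.

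For point (4), $\M(tt')=\mathbb{R}\alpha_t\oplus\mathbb{R}\alpha_{t'}$ is a plane. By \cite[Remark 3.2]{Dyer_Bruhat}, $\Phi\cap\M(tt')$ is the root system of a dihedral reflection subgroup $D$, which is the unique maximal dihedral reflection subgroup containing $t$ and $t'$. Every $q\leq_T tt'$ has $\alpha_q\in\M(tt')$, so $q\in D$ and hence $P(tt')\subseteq D$. Conversely, as in the proof of Proposition~\ref{prop_poset_rk3}, any $q\in D\cap T$ satisfies $q\cdot(q\,tt')=tt'$ with $q\,tt'\in D\cap T$. So $q\leq_T tt'$, since $tt'\neq 1$ has length $2$. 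This gives $D\cap T\subseteq T(tt')$, hence $D=P(tt')$ and $P(tt')\cap T=\{q\in T\mid q\leq_T tt'\}$.
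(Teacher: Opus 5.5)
Points (1) and (2) follow the paper. In (3), however, the step ``reflection length in a parabolic subgroup agrees with $\ell_T$; consequently every $t\leq_T x$ lies in $Q$'' does not follow. The equality $\ell_{T\cap Q}=\ell_T$ only concerns elements of $Q$. A decomposition $x=ty$ with $t\notin Q$ has $y=tx\notin Q$, so that equality says nothing about it. What is needed is the separate fact that every factor of a $T$-reduced decomposition of an element of a parabolic subgroup lies in that subgroup, \cite[Theorem~1.4]{BDSW}; this is what the paper cites.

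Your fallback does close the gap, and it should be written out:
\begin{itemize}
\item Write $Q=uW_Ju^{-1}$ with $W_J$ finite. Then $B$ is positive definite on $V_J$, so $V=V_J\oplus V_J^{\perp}$, and $W_J$ fixes $V_J^{\perp}$ pointwise.
\item Carter's lemma in $W_J$ gives $\mathsf{Mov}(x)\subseteq uV_J$ and $\dim\mathsf{Mov}(x)=\ell_{T\cap Q}(x)\geq\ell_T(x)=\dim\M(x)$. The inequality is trivial since $T\cap Q\subseteq T$.
\item Since $\mathsf{Mov}(x)\subseteq\M(x)$, this forces $\M(x)=\mathsf{Mov}(x)\subseteq uV_J$.
\item Hence $\alpha_t\in\Phi\cap uV_J=u\Phi_J$, and so $t\in Q$, for every $t\leq_T x$.
\end{itemize}
The remaining steps are fine: you identify $T(x)$ with $\{t\in T\cap Q\mid t\leq_{T\cap Q}x\}$ using $\ell_{T\cap Q}=\ell_T$ on $Q$, which does follow from Theorem~\ref{thm_dyer_ref_length}, then apply the finite case and the minimality of $Q$.

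So the proposal is correct, but (3) and (4) take different routes from the paper.

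In (3) you replace \cite{BDSW} by Proposition~\ref{lem_wm_basic}. This restricts your argument to $x\in[1,c]_T$, whereas the paper's proof works for any $x\in W$ with $P(x)$ finite. In exchange, taking $Q$ to be the parabolic closure of $x$ makes the final minimality step automatic. The paper instead embeds $P(x)$ in a finite standard parabolic $W'$ and must argue separately that no smaller parabolic subgroup contains $x$.

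In (4), the paper quotes \cite[Theorem~1.6 and Proposition~1.4]{Gobet_max_dih}. That result identifies the reflections of the maximal dihedral subgroup with $\{q\in T\mid qtt'\in T\}$ for arbitrary distinct $t,t'$. You reprove this in the present setting:
\begin{itemize}
\item $D\cap T\subseteq T(tt')$ comes from the parity argument in a dihedral group, which is valid in general.
\item $T(tt')\subseteq D$ comes from $\dim\M(tt')=2$ and \cite[Remark~3.2]{Dyer_Bruhat}, as in the proof of Proposition~\ref{prop_poset_rk3}.
\end{itemize}
This is correct and self-contained, but the second inclusion genuinely uses the hypothesis $tt'\leq_T c$.
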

	
	\begin{proof}
	\begin{enumerate}
		\item For $c$, this is immediate, since $s\leq_T c$ for all $s\in S$ (alternatively $P(c)$ is a reflection subgroup of $W$ containing $c$, hence by Theorem~\ref{thm_cox_proper} it can only be $W$). Now let $c'$ be a standard parabolic Coxeter element. Let $s'\in \mathrm{supp}(c')$. Then $s' \leq_T c'$, hence $\langle\mathrm{supp}(c') \rangle \subseteq P(c')$. To conclude the proof we show that for every $t\in T$ such that $t \leq_T c'$, we have $t\in \langle \mathrm{supp}(c')\rangle$. In this case by Theorem~\ref{thm_inj_subspaces} we have $\alpha_t \in \M(t) \subseteq \M(c') = V'$, where $V'$ denotes the geometric representation of the standard parabolic subgroup $\langle \mathrm{supp}(c') \rangle$. This forces $t\in \langle \mathrm{supp}(c') \rangle$, since $V' \cap \Phi$ precisely consists of the roots whose reflections are in $\langle \mathrm{supp}(c') \rangle$ (see for instance~\cite[Proposition 3.3]{BH}).
		\item Since $T$ is stable by conjugation, if $t\in T$ satisfies $t \leq_T x$, then $w t w^{-1} \leq_T w x w^{-1}$. Hence $w P(x) w^{-1} \subseteq P(w x w^{-1})$. Swapping the roles of $x$ and $w x w^{-1}$ yields $w^{-1} P (w x w^{-1}) w \subseteq P(x)$, yielding the reversed inclusion.
		\item Assume that $P(x)$ is finite. Then there is $w\in W$ such that $w P(x) w^{-1}$ is included in a finite standard parabolic subgroup $W'$ of $W$ (see for instance~\cite[Proposition 1.3]{BH_a}). It suffices to show that $w P(x) w^{-1}$ is the parabolic closure of $w x w^{-1}$ inside $W'$. Indeed, it will therefore also be parabolic inside $W$, hence $P(x)$ as well since parabolic subgroups are stable by conjugation; moreover, if $x \in P \subsetneq P(x)$ with $P$ parabolic, then $w P w^{-1}$ would be a proper parabolic subgroup of $w P(x) w^{-1}$ containing $wxw^{-1}$, a contradiction. 
		
		Now by properties recalled in Subsection~\ref{various}, the parabolic closure of $w x w^{-1}$ in the finite Coxeter group $W'$ is the reflection subgroup having as reflections all the reflections $t\in T'$ such that $t\leq_{T'} w x w^{-1}$. But in fact, if $t \in T$ is such that $t \leq_T wxw^{-1}$, then $t\in T'$: indeed $T$-reduced decompositions of elements lying in parabolic subgroups have all their factors in this parabolic subgroup (see~\cite[Theorem 1.4]{BDSW}). We thus have 
		\begin{align*}
			\{t\in T' \mid t\leq_{T'} wxw^{-1}\} &= \{t\in T \mid t\leq_{T} wxw^{-1}\} \\ &=
			\{ t\in T \mid w^{-1} t w \leq_T x\} \\ &= w\{ t\in T \mid t \leq_T x \} w^{-1}, 
		\end{align*}
		and the first set generates the parabolic closure of $wxw^{-1}$ inside $W'$, while the last set generates $w P(x) w^{-1}$. The two groups thus coincide.   
		\item Denote by $W(t,t')$ the unique maximal dihedral reflection subgroup of $W$ containing $t$ and $t'$. By~\cite[Theorem 1.6 and Proposition 1.4]{Gobet_max_dih}, the set of reflections of $W(t, t')$ is precisely those $q\in T$ such that $q tt'$ is a reflection of $W$, that is, such that $q\leq_T tt'$. This shows all claims. 
	\end{enumerate}
	\end{proof}
	
	We now prove the main result of the subsection. Denote by $\mathrm{RS}(W)$ the set of reflection subgroups of $W$, partially ordered by inclusion. 
	
	\begin{theorem}\label{poset_2}
		Let $(W,S)$ be a Coxeter system and $c\in W$ a Coxeter element. The map $$[1,c]_T \longrightarrow \mathrm{RS}(W), \ x \mapsto P(x),$$ is order-preserving and injective. 
	\end{theorem}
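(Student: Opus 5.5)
Order-preservation is immediate from transitivity of $\leq_T$. If $x \leq_T y$ and $t \in T$ satisfies $t \leq_T x$, then $t \leq_T y$. Hence $T(x) \subseteq T(y)$, and therefore $P(x) = \langle T(x) \rangle \subseteq \langle T(y)\rangle = P(y)$.

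For injectivity, I will reduce to Theorem~\ref{thm_inj_subspaces}. Suppose $x, x' \in [1,c]_T$ satisfy $P(x) = P(x')$. The goal is $\M(x) = \M(x')$, after which Theorem~\ref{thm_inj_subspaces} gives $x = x'$. The idea is to show that $\M(x)$ can be recovered from the group $P(x)$ alone, namely
$$\M(x) = \sum_{t \in P(x)\cap T} \mathbb{R}\alpha_t.$$
The inclusion $\subseteq$ holds by definition, since every $t \leq_T x$ lies in $P(x)$. For $\supseteq$, it suffices to show that every reflection of $P(x)$ has its root in $\M(x)$.

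To prove this, I use that $P(x)$ is generated by the reflections $t \leq_T x$, each with $\alpha_t \in \M(x)$. The subspace $U := \M(x)$ is therefore stable under each generator $t$, because $t(v) = v - 2B(\alpha_t, v)\alpha_t$. Consequently $U$ is stable under all of $P(x)$. Every reflection $q$ of the reflection subgroup $P(x)$ is conjugate within $P(x)$ to one of its generators in the given generating set; this is the standard fact that reflections of $\langle A\rangle$ are the $P(x)$-conjugates of elements of $A$ (Dyer~\cite{Dyer_subgroups}, Deodhar~\cite{Deodhar}). So write $q = g t g^{-1}$ with $g\in P(x)$ and $t \leq_T x$. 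Then $\alpha_q = \pm g(\alpha_t) \in U$. This proves the displayed equality. Since the right-hand side depends only on the group $P(x)$, the equality $P(x) = P(x')$ yields $\M(x) = \M(x')$, and we conclude by Theorem~\ref{thm_inj_subspaces}.

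The only step requiring care is the claim that every reflection in a subgroup generated by a set $A$ of reflections is a conjugate, inside that subgroup, of an element of $A$. I will cite Dyer's description of reflection subgroups for this. Alternatively, I will avoid it by noting that the set $\{t \in T \mid \alpha_t \in U\}$ together with $U$ being $P(x)$-stable gives the needed closure directly: any reflection $q \in P(x)$ is a product of generators whose roots lie in $U$. Hence $\mathrm{Im}(q - \mathrm{Id}_V) \subseteq U$, and since $\mathrm{Im}(q-\mathrm{Id}_V) = \mathbb{R}\alpha_q$, we get $\alpha_q\in U$. This second argument, the same one used in the proof of Theorem~\ref{thm_inj_subspaces}, is cleaner, and I would use it.
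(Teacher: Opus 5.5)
Your proof is correct and follows the paper's argument: you get order-preservation from $T(x)\subseteq T(y)$, and injectivity by showing that $\M(x)=\sum_{t\in P(x)\cap T}\mathbb{R}\alpha_t$ depends only on $P(x)$ and then invoking Theorem~\ref{thm_inj_subspaces}. The one difference is in showing that every reflection of $P(x)$ has its root in $\M(x)$: the paper cites Dyer's result that the reflections of $\langle T(x)\rangle$ are the $P(x)$-conjugates of elements of $T(x)$, whereas your preferred argument is self-contained and mirrors a step in the proof of Theorem~\ref{thm_inj_subspaces}, namely that $q$ is a product of reflections with roots in $\M(x)$, so $\mathbb{R}\alpha_q=\mathrm{Im}(q-\mathrm{Id}_V)\subseteq \M(x)$.
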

	
	\begin{proof}
		The fact that the map is order-preserving follows from the fact that if $u, v\in [1, c]_T$ with $u \leq_T v$, then for every $t\in T$ such that $t\leq_T u$, we have $ t\leq_T v$. 
		
		Let $x, x'\in [1,c]_T$ such that $P(x) = P(x')$. The set of reflections of $P(x)$ is given by $P(x) \cap T$, and since $P(x)$ is by definition generated by the set $T(x)$ of reflections lying below $x$ for the absolute order, by~\cite[Corollary 3.11]{Dyer_subgroups} we have $P(x) \cap T = \bigcup_{w\in P(x)} w T(x) w^{-1}$. Hence every reflection in $P(x)$ can be written as a palindromic word in elements of $T(x)$, and in particular, every reflection in $P(x)$ has its root which is a linear combination of roots whose corresponding reflections are in $T(x)$. We thus have   
		$$\M(x) = \sum_{t\in T(x)} \mathbb{R} \alpha_t = \sum_{t\in P(x)\cap T} \mathbb{R} \alpha_t = \sum_{t\in P(x')\cap T} \mathbb{R} \alpha_t = \sum_{t\in T(x')} \mathbb{R} \alpha_t = \M(x').$$
		It then follows from Theorem~\ref{thm_inj_subspaces} that $x=x'$, which concludes the proof. 
	\end{proof}
	
	One can wonder whether the map $x \mapsto P(x)$ is an isomorphism of posets onto its image or not. Unlike for $x \mapsto \M(x)$ we do not have a counterexample to this statement. In Example~\ref{ex:fail_poset}, one easily checks using for instance the realization of $\widetilde{A_3}$ as a group of affine permutations that $t \notin P(x)$. In fact $P(x) = \langle  s_1 s_2 s_1, s_4 s_3 s_4 \rangle\times \langle s_1 s_4 s_1\rangle \cong \widetilde{A}_2 \times A_1$, while $\langle P(x), t \rangle = \langle  s_1 s_2 s_1, s_4 s_3 s_4 \rangle \times \langle s_1 s_4 s_1, s_2 s_3 s_2\rangle \cong \widetilde{A}_2 \times \widetilde{A}_2$.   
	
	We conjecture that the map $x \mapsto P(x)$ is an isomorphism of posets onto its image (see Conjecture~\ref{conj_isom_3maps} below).

	\subsection{Sets of reflections}
	Let $(W,S)$ be a Coxeter system and $c\in W$ a Coxeter element. Let $x\in [1,c]_T$. As in the previous subsection let $$T(x):= \{t\in T \mid t \leq_T x\}.$$
	When $W$ is finite, one has $P(x) \cap T = T(x)$ (see Subsection~\ref{ex_a1_tilde}), but in general $T(x) \subsetneq P(x) \cap T$. 
		\begin{theorem}\label{poset_3}
		Let $(W,S)$ be a Coxeter system and $c\in W$ a Coxeter element. The map $$[1,c]_T \longrightarrow \mathcal{P}(T), \ x \mapsto T(x),$$ is order-preserving and injective. 
	\end{theorem}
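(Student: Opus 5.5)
Order-preservation is immediate from the definitions. If $u \leq_T v$ in $[1,c]_T$ and $t\in T(u)$, then $t \leq_T u \leq_T v$. Transitivity of $\leq_T$ then gives $t\leq_T v$, so $T(u)\subseteq T(v)$. Transitivity of $\leq_T$ is standard: concatenating $T$-reduced expressions produces length additivity, since $\ell_T(v)\leq \ell_T(t)+\ell_T(t^{-1}u)+\ell_T(u^{-1}v)$ and the two defining equalities force equality.

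For injectivity, I would reduce to Theorem~\ref{thm_inj_subspaces}, in the same way as in the proof of Theorem~\ref{poset_2}. Suppose $x,x'\in[1,c]_T$ satisfy $T(x)=T(x')$. By Definition~\ref{def:wms}, the absolute moved space depends only on the set of reflections below the element:
$$\M(x)=\sum_{t\in T(x)}\mathbb{R}\alpha_t=\sum_{t\in T(x')}\mathbb{R}\alpha_t=\M(x').$$
Theorem~\ref{thm_inj_subspaces} then yields $x=x'$.

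Alternatively, one can factor through the reflection subgroups. $T(x)=T(x')$ implies $P(x)=\langle T(x)\rangle=\langle T(x')\rangle=P(x')$, and Theorem~\ref{poset_2} gives $x=x'$. Either route works, and I would present the direct one via $\M$ since it avoids Dyer's result on reflections of reflection subgroups.

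There is no real obstacle here, since all the substantive work, namely the direct-sum decomposition $\M(x)\oplus\M(x^{-1}c)=V$ coming from the Euler form $\varphi_c$, was already done in Theorem~\ref{thm_inj_subspaces}. The only point to check carefully is that the definition of $\M(x)$ uses exactly the set $T(x)$ and no other data attached to $x$, which is the case.
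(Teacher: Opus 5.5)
Your proof is correct. Order-preservation is the paper's argument: the paper extends a $T$-reduced expression of $x$ beginning with $t$ to one of $y$, which is transitivity of $\leq_T$. Your justification of transitivity should be written as the chain
\[
\ell_T(v)\leq \ell_T(t)+\ell_T(t^{-1}v)\leq \ell_T(t)+\ell_T(t^{-1}u)+\ell_T(u^{-1}v)=\ell_T(v).
\]
The middle term is the one the conclusion needs, and as written you only state the outer inequality.

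For injectivity, the paper uses the route you list as the alternative: $T(x)=T(x')$ gives $P(x)=P(x')$, and Theorem~\ref{poset_2} concludes. Your preferred route is a genuine shortcut. By definition $\M(x)=\sum_{t\in T(x)}\mathbb{R}\alpha_t$, so $T(x)=T(x')$ gives $\M(x)=\M(x')$ at once, and Theorem~\ref{thm_inj_subspaces} applies directly. In the proof of Theorem~\ref{poset_2}, the paper needs Dyer's result that every reflection of $P(x)$ is a $P(x)$-conjugate of an element of $T(x)$. That result is used only to show $\M(x)$ is determined by $P(x)$, a step that is unnecessary when you start from $T(x)$.

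The paper's route buys only economy of exposition, since it reuses Theorem~\ref{poset_2} and fits the chain of implications \eqref{impl}. Both arguments ultimately rest on the same input, Theorem~\ref{thm_inj_subspaces}.
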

	
	\begin{proof}
	Let $x, y\in [1,c]_T$ such that $x \leq_T y$. If $t\in T(x)$, then there is a $T$-reduced expression of $x$ starting by $t$. Extending it into a $T$-reduced expression of $y$ yields that $t \in T(y)$. We thus have $T(x) \subseteq T(y)$, and the map is order-preserving. 
	
	Let $x, x'\in [1,c]_T$ such that $T(x) = T(x')$. Then $$P(x) = \langle T(x) \rangle = \langle T(x') \rangle = P(x').$$ By Theorem~\ref{poset_2} we have $x=x'$, and the map is injective. 
	\end{proof}
	
		Here as well, one can wonder whether the map $x \mapsto P(x)$ is an isomorphism of posets onto its image. Unlike for $x \mapsto \M(x)$ we do not have a counterexample to this statement. In Example~\ref{ex:fail_poset}, we have $t \not\leq x$, hence $t\notin T(x)$.   
	
	We conjecture that the map $x \mapsto T(x)$ is an isomorphism of posets onto its image (see Conjecture~\ref{conj_isom_3maps} below).
	
\subsection{Link between the various maps}

Let $x\in [1,c]_T$. We have $P(x) = \langle T(x) \rangle$, and as observed in the proof of Theorem~\ref{poset_2} we have $\M(x) = \sum_{t\in P(x)\cap T} \mathbb{R} \alpha_t$. For $x, y\in [1,c]_T$, we thus have the following implications: \begin{equation}\label{impl} T(x) \subseteq T(y) \Rightarrow  P(x) \subseteq P(y) \Rightarrow \M(x) \subseteq \M(y).\end{equation}
We deduce the following 

\begin{lemma}\label{prop_link_maps}
Let $(W,S)$ be a Coxeter system and $c\in W$ a Coxeter element. 
\begin{enumerate}
\item If the map $x \mapsto P(x)$ is an isomorphism of posets onto its image, then the map $x \mapsto T(x)$ is an isomorphism of posets onto its image. 
\item If the map $x \mapsto \M(x)$ is an isomorphism of posets onto its image, then both maps $x \mapsto P(x)$ and $x \mapsto T(x)$ are isomorphisms of posets onto their images.
\end{enumerate}
\end{lemma}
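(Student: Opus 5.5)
The proof is formal and uses only the chain of implications~\eqref{impl} together with the fact, from Theorems~\ref{thm_inj_subspaces},~\ref{poset_2} and~\ref{poset_3}, that all three maps are order-preserving. Recall that an order-preserving map $f$ is an isomorphism of posets onto its image exactly when $f(x)\subseteq f(y)$ implies $x\leq_T y$ for all $x,y\in[1,c]_T$. (Injectivity then follows automatically from antisymmetry of $\leq_T$, and we also have it independently.) So in each case it suffices to check this reverse implication for the map in the conclusion.

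For (1), suppose $x\mapsto P(x)$ reflects the order, and let $x,y\in[1,c]_T$ with $T(x)\subseteq T(y)$. The first implication in~\eqref{impl} gives $P(x)\subseteq P(y)$. Directly: $P(x)=\langle T(x)\rangle\subseteq\langle T(y)\rangle=P(y)$. The hypothesis on $P$ then gives $x\leq_T y$. Hence $x\mapsto T(x)$ reflects the order. Since it is order-preserving by Theorem~\ref{poset_3}, it is an isomorphism onto its image.

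For (2), suppose $x\mapsto\M(x)$ reflects the order. If $P(x)\subseteq P(y)$, the second implication in~\eqref{impl} gives $\M(x)\subseteq\M(y)$, so $x\leq_T y$. Therefore $x\mapsto P(x)$ is an isomorphism onto its image, and applying (1) shows the same for $x\mapsto T(x)$. The implication $P(x)\subseteq P(y)\Rightarrow\M(x)\subseteq\M(y)$ holds because, as noted in the proof of Theorem~\ref{poset_2}, $\M(x)=\sum_{t\in P(x)\cap T}\mathbb{R}\alpha_t$, and $P(x)\cap T\subseteq P(y)\cap T$. No step poses a real obstacle; the only care needed is to invoke this description of $\M(x)$ via Dyer's result, rather than only via $T(x)$.
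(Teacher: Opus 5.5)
Your proposal is correct and follows essentially the same argument as the paper: for (1) you chain $T(x)\subseteq T(y)\Rightarrow P(x)\subseteq P(y)\Rightarrow x\leq_T y$, and for (2) you chain $P(x)\subseteq P(y)\Rightarrow \M(x)\subseteq \M(y)\Rightarrow x\leq_T y$ via~\eqref{impl}, then apply (1). The paper proceeds the same way, using the already-established order-preservation from Theorems~\ref{poset_2} and~\ref{poset_3}.
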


\begin{proof}
We already known from Theorems~\ref{poset_2} and~\ref{poset_3} that both maps $x \mapsto P(x)$ and $x \mapsto T(x)$ are injective and order-preserving. For point $1$, it suffices to show that, if $x, y\in [1,c]_T$ with $T(x) \subseteq T(y)$, then $x \leq_T y$. By~\eqref{impl} this implies that $P(x) \subseteq P(y)$, and by assumption this implies $x \leq_T y$. For point $2$ it suffices to show the result for the map $x \mapsto P(x)$ and use point $1$ to deduce it for the map $x \mapsto T(x)$. But if $P(x) \subseteq P(y)$, then by~\eqref{impl} we have $\M(x) \subseteq \M(y)$, hence $x\leq_T y$ by assumption. 
\end{proof}

\begin{cor}\label{isom_rank_three}
Let $(W,S)$ be a Coxeter system of rank three and $c\in W$ a Coxeter element. Then the three maps $x \mapsto \M(x)$, $x \mapsto P(x)$ and $x \mapsto T(x)$ are isomorphisms of posets onto their images. 
\end{cor}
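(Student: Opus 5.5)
This is immediate from two earlier results. Proposition~\ref{prop_poset_rk3} shows that in rank three the map $x \mapsto \M(x)$ is an isomorphism of posets onto its image. Lemma~\ref{prop_link_maps}~(2) shows that whenever this holds, the maps $x \mapsto P(x)$ and $x \mapsto T(x)$ are also isomorphisms of posets onto their images. Combining the two gives all three claims.

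For completeness, we spell out the mechanism behind Lemma~\ref{prop_link_maps}~(2) in this setting. Theorems~\ref{thm_inj_subspaces}, \ref{poset_2} and~\ref{poset_3} show that the three maps are injective and order-preserving, so only order-reflection remains. Let $x,y\in[1,c]_T$ with $T(x)\subseteq T(y)$, or with $P(x)\subseteq P(y)$. The chain of implications~\eqref{impl} gives $\M(x)\subseteq \M(y)$. The rank three argument of Proposition~\ref{prop_poset_rk3} then yields $x\leq_T y$.

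That argument reduces to the only nontrivial case $\ell_T(x)=1$, $\ell_T(y)=2$. It then uses Dyer's result that the roots lying in the plane $\M(y)$ form the root system of a dihedral reflection subgroup.

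No real obstacle arises: all the substantive work was done in Proposition~\ref{prop_poset_rk3}.
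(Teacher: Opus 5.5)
Your proposal is correct and matches the paper's proof: the corollary combines Proposition~\ref{prop_poset_rk3} (order-reflection for $x\mapsto \M(x)$ in rank three) with Lemma~\ref{prop_link_maps}~(2), which transfers order-reflection to $x\mapsto P(x)$ and $x\mapsto T(x)$ through the implications~\eqref{impl}. Your unpacking of the mechanism, including the reduction to $\ell_T(x)=1$, $\ell_T(y)=2$ and the use of Dyer's result on roots in a plane, is accurate.
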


\begin{proof}
This is a combination of Lemma~\ref{prop_link_maps} and Proposition~\ref{prop_poset_rk3}. 
\end{proof}

\subsection{Link with the lattice property}

While the map $x \mapsto \M(x)$ is not an isomorphism of posets onto its image in general (see Example~\ref{ex:fail_poset}), we expect the maps $x \mapsto P(x)$ and $x \mapsto T(x)$ to be isomorphisms of posets onto their images. 

We show that, if $[1,c]_T$ is a lattice, then this holds for the map $x \mapsto T(x)$: 

\begin{prop}
Let $(W,S)$ be a Coxeter system and $c\in W$ a Coxeter element. Assume that $[1,c]_T$ is a lattice. Then the map $x \mapsto T(x)$ is an isomorphism of posets onto its image. 
\end{prop}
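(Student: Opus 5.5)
We already know from Theorem~\ref{poset_3} that the map $x \mapsto T(x)$ is injective and order-preserving. It therefore suffices to take $x, y\in [1,c]_T$ with $T(x)\subseteq T(y)$ and show that $x\leq_T y$. The idea is to use the lattice structure to replace $y$ by the meet $z := x\wedge y$ in $[1,c]_T$, and then to show that $z = x$ by means of the injectivity of $x\mapsto T(x)$.

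The first step is to compute $T(z)$. Every $t\in T(x)$ satisfies $t\leq_T x$. Since $T(x)\subseteq T(y)$, we also have $t\leq_T y$. Moreover $t\in [1,c]_T$, so $t$ is a common lower bound of $x$ and $y$ in $[1,c]_T$, and therefore $t\leq_T z$. Hence $T(x)\subseteq T(z)$. Conversely, $z\leq_T x$, and order-preservation gives $T(z)\subseteq T(x)$. Thus $T(z)=T(x)$.

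By injectivity (Theorem~\ref{poset_3}), it follows that $z=x$, so $x = x\wedge y\leq_T y$, as required.

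The only point needing care is that the meet is taken in the poset $[1,c]_T$ and not in $(W,\leq_T)$. Every reflection below $x$ lies in $[1,c]_T$ because $t\leq_T x\leq_T c$ and $\leq_T$ is transitive, so it is a legitimate lower bound. Only the existence of meets is used, and this follows from the lattice hypothesis. I do not anticipate any real obstacle beyond this.
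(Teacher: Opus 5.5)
Your proof is correct, but it takes a different route from the paper's.

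The paper argues by induction on $N(x,y)=|S|-(\ell_T(y)-\ell_T(x))$, using $\M$ only to guarantee $\ell_T(x)\leq \ell_T(y)$. For $x\neq 1$ it writes $x=tx'$ with $t\leq_T x$ and $\ell_T(x')=\ell_T(x)-1$. Since $T(x')\subseteq T(x)\subseteq T(y)$, induction gives $x'\leq_T y$. It then concludes because $x$ is the join of $t$ and $x'$ in $[1,c]_T$, and both lie below $y$. In effect, the paper works with joins and shows that in a lattice $[1,c]_T$ every element is the join of the reflections below it. Its induction never actually uses the injectivity from Theorem~\ref{poset_3}, only order-preservation. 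It therefore obtains order-reflection, and hence injectivity, from the lattice property alone. The dimension count could even be dropped by inducting on $\ell_T(x)$ instead.

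You instead take the meet $x\wedge y$ and let the injectivity of Theorem~\ref{poset_3} do the work. This gives a one-step argument with no induction. The cost is that it depends on the absolute moved space machinery behind Theorems~\ref{thm_inj_subspaces} and~\ref{poset_2}.
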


\begin{proof}
Since, by Theorem~\ref{poset_3}, the map $x \mapsto T(x)$ is injective and order-preserving, it suffices to show that, if $x, y\in [1,c]_T$ satisfy $T(x) \subseteq T(y)$, then $x \leq_T y$. First note that, since $T(x) \subseteq T(y)$, by~\eqref{impl} we have $\M(x) \subseteq \M(y)$, hence $\ell_T(x)=\dim(\M(x)) \leq \dim(\M(y))=\ell_T(y)$. 

We argue by induction on $N(x,y):=|S| - (\ell_T(y) - \ell_T(x))$. If $N(x,y) = 0$, then $y = c$, $x=1$, hence $x \leq_T y$. We can thus assume that $N(x,y) \geq 1$, and that the result holds for $N(x',y') < N(x,y)$. If $x=1$, then $x \leq_T y$. We can thus assume that $x\neq 1$. Let $t\in T$ such that $t \leq_T x$. Since $t \in T(x) \subseteq T(y)$, we have $t\leq_T y$. Let $x'\in [1,c]_T$ such that $tx' = x$. Since every $T$-reduced expression of $x'$ becomes a $T$-reduced expression of $x$ when concatenating $t$ at the left, we have $T(x') \subseteq T(x) \subseteq T(y)$, and $\ell_T(x') = \ell_T(x)-1$. Hence $N(x',y) = N(x,y)-1$. By induction, we have $x'\leq_T y$. We thus have $$ t\leq_T x, \ x' \leq_T x, \ t \leq_T y, \ x' \leq_T y.$$  
Since $x= tx'$ and $\ell_T(x) = \ell_T(x') + 1$, we have that $x$ is the join of $x'$ and $t$. Since $x' \leq_T y$ and $t \leq_T y$, we deduce that $x \leq_T y$.  
\end{proof}

	\section{Applications}\label{sec:applications}
	
	In this section, we use absolute moved spaces to show properties of noncrossing partition posets $[1,c]_T$. 
	
	\subsection{Some properties of noncrossing partition posets holding in general}
	
	We derive from the results of the previous sections some properties of noncrossing partition posets holding in full generality, that is, for an arbitrary Coxeter system $(W,S)$ and an arbitrary choice of Coxeter element $c\in W$. 
	
	\begin{lemma}\label{lem_2_ref_comm}
	Let $(W,S)$ be a Coxeter system and $c\in W$ a Coxeter element. Let $t, t' \in T$ such that $t\neq t'$, $tt' \leq_T c$, and $t't \leq_T c$. Then $tt'=t't$. 
	\end{lemma}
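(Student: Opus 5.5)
Apply Lemma~\ref{lem_omega_2} to both products $tt'$ and $t't$ and compare, using that $\omega_c$ is skew-symmetric; this should force $B(\alpha_t,\alpha_{t'})=0$, hence commutation.

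First, since $tt' \leq_T c$, Lemma~\ref{lem_omega_2} gives
\[
\omega_c(\alpha_t,\alpha_{t'}) = -2B(\alpha_t,\alpha_{t'}).
\]
Second, since $t't \leq_T c$, the same lemma with the roles of $t$ and $t'$ swapped gives
\[
\omega_c(\alpha_{t'},\alpha_t) = -2B(\alpha_{t'},\alpha_t) = -2B(\alpha_t,\alpha_{t'}),
\]
using the symmetry of $B$. Third, $\omega_c$ is skew-symmetric: this holds on the basis of simple roots by definition, hence on all of $V$ by bilinearity. So $\omega_c(\alpha_{t'},\alpha_t) = -\omega_c(\alpha_t,\alpha_{t'})$. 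Combining the three identities gives $-2B(\alpha_t,\alpha_{t'}) = 2B(\alpha_t,\alpha_{t'})$, so $B(\alpha_t,\alpha_{t'})=0$.

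It remains to deduce $tt'=t't$ from $B(\alpha_t,\alpha_{t'})=0$. This is standard. Since $B(\alpha_t,\alpha_{t'})=0$, the reflection $t$ fixes $\alpha_{t'}$, so $tt't = t_{t(\alpha_{t'})} = t'$. Equivalently, $\cos(\pi/m)=|B(\alpha_t,\alpha_{t'})|$ for the dihedral subgroup $\langle t,t'\rangle$ gives $m=2$.

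No step is hard. The only points needing care are that Lemma~\ref{lem_omega_2} is symmetric in its hypotheses, so it applies to the ordered pair $(t',t)$, and that skew-symmetry of $\omega_c$ passes from simple roots to arbitrary roots by bilinearity. The hypothesis $t\neq t'$ is not even needed for the computation: if $t=t'$, then $B(\alpha_t,\alpha_t)=1\neq 0$, so that case could not occur anyway.
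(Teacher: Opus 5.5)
Your proof is correct, but it takes a different route from the paper. The paper gets the lemma in one line from injectivity of absolute moved spaces. By Proposition~\ref{lem_wm_basic} one has $\M(tt')=\mathbb{R}\alpha_t\oplus\mathbb{R}\alpha_{t'}=\M(t't)$, so Theorem~\ref{thm_inj_subspaces} forces $tt'=t't$.

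You instead stay at the level of the Euler form. Extending $(t,t')$ and $(t',t)$ to $T$-reduced expressions of $c$ and applying Lemma~\ref{lem_form_hurwitz} (via Lemma~\ref{lem_omega_2} and skew-symmetry of $\omega_c$) gives incompatible values unless $B(\alpha_t,\alpha_{t'})=0$. In fact $\omega_c$ is not needed: the two expressions directly give $\varphi_c(\alpha_t,\alpha_{t'})=0$ and $\varphi_c(\alpha_t,\alpha_{t'})=2B(\alpha_t,\alpha_{t'})$.

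What each route buys:
\begin{itemize}
\item Yours is more elementary. It bypasses Proposition~\ref{prop:ortho_y}, Corollary~\ref{cor_det} and the direct-sum argument behind injectivity, and it yields the explicit orthogonality of the two roots.
\item The paper's route showcases Theorem~\ref{thm_inj_subspaces}. That same mechanism gives $x=y$ for arbitrary permutations of letters in Proposition~\ref{prop_sets}.
\item Both ultimately rest on Hurwitz transitivity through Lemma~\ref{lem_form_hurwitz}.
\end{itemize}

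One correction to your closing remark: the hypothesis $t\neq t'$ is genuinely used. When $t=t'$ the hypotheses do hold, since $tt'=t't=1\leq_T c$, and the conclusion is then trivial. What breaks is Lemma~\ref{lem_omega_2}. Its proof needs $(t,t')$ to be a $T$-reduced expression, and applied to $t=t'$ it would give the absurd $0=\omega_c(\alpha_t,\alpha_t)=-2$. So the case $t=t'$ can occur; your computation just does not cover it, and $t\neq t'$ is exactly what licenses the lemma.

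A minor point: your aside $\cos(\pi/m)=|B(\alpha_t,\alpha_{t'})|$ holds for the canonical generators of the dihedral subgroup, not for an arbitrary pair of reflections. In general one gets $|\cos(k\pi/m)|$ with $k$ prime to $m$. Your main argument, that $t(\alpha_{t'})=\alpha_{t'}$ implies $tt't=t'$, is the clean one.
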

	\begin{proof}
	We have $\M(tt') = \mathbb{R} \alpha_t \oplus \mathbb{R} \alpha_{t'} = \M(t't)$, which by Theorem~\ref{thm_inj_subspaces} forces $t't=tt'$. 
	\end{proof}
	
	\begin{prop}\label{prop_sets}
	Let $(W,S)$ be a Coxeter system and $c\in W$ a Coxeter element. Let $x\in [1,c]_T$. Let $t_1 t_2 \cdots t_k$ be a $T$-reduced expxression of $x$. Assume that there is $\sigma \in \mathfrak{S}_k$ such that $t_{\sigma(1)} t_{\sigma(2)} \cdots t_{\sigma(k)}$ is a $T$-reduced expression of an element $y\in [1,c]_T$. Then $x=y$, and the two expressions $t_1 t_2 \cdots t_k$ and $t_{\sigma(1)} t_{\sigma(2)} \cdots t_{\sigma(k)}$ are related by a sequence of commutation of adjacent letters. In particular, given a set $\{t_1, t_2, \dots, t_k\}$ of $k$ distinct reflections ($k\geq 1$), there is at most one element from $[1,c]_T$ having a $T$-reduced expression whose letters are exactly the $t_i$'s. 
	\end{prop}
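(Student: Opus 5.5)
First, $x=y$ follows from Theorem~\ref{thm_inj_subspaces}. By Proposition~\ref{lem_wm_basic} applied to both $T$-reduced expressions,
$$\M(x)=\bigoplus_{i=1}^k \mathbb{R}\alpha_{t_i}=\bigoplus_{i=1}^k\mathbb{R}\alpha_{t_{\sigma(i)}}=\M(y).$$
Injectivity of $\M$ then gives $x=y$. The final sentence of the statement is an immediate consequence: two elements having $T$-reduced expressions with the same set of letters are such permutations of each other. (The letters of a $T$-reduced expression of an element of $[1,c]_T$ are distinct, since their roots are linearly independent by Proposition~\ref{lem_wm_basic}.)

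The real content is the claim about commutations, which I would prove by induction on $k$. The cases $k\le 1$ are trivial. Suppose $k\geq 2$, and set $j=\sigma(1)$, so $t_j$ is the first letter of the second word. The plan is to show that $t_j$ commutes with $t_1,\dots,t_{j-1}$. Then the first word can be rewritten, using commutations only, as $t_j t_1\cdots t_{j-1}t_{j+1}\cdots t_k$. Cancelling $t_j$ leaves two $T$-reduced expressions of $t_jx\in[1,c]_T$ that are permutations of one another with $k-1$ letters, and the induction hypothesis finishes the proof.

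To get the commutations, I would use the form $\varphi_c$. Complete $t_1\cdots t_k$ to a $T$-reduced expression of $c$, and likewise $t_{\sigma(1)}\cdots t_{\sigma(k)}$ with the same suffix, which is a $T$-reduced expression of $x^{-1}c$ since $x=y$. Apply Lemma~\ref{lem_form_hurwitz} to both expressions of $c$. In the second one $t_j$ precedes every $t_i$ with $i\ne j$, so $\varphi_c(\alpha_{t_j},\alpha_{t_i})=0$ for all $i\neq j$. In the first one, for $i<j$, we have $\varphi_c(\alpha_{t_j},\alpha_{t_i})=2B(\alpha_{t_j},\alpha_{t_i})$. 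Hence $B(\alpha_{t_i},\alpha_{t_j})=0$ for all $i<j$. Two reflections whose roots are $B$-orthogonal commute, since $t_j(\alpha_{t_i})=\alpha_{t_i}-2B(\alpha_{t_j},\alpha_{t_i})\alpha_{t_j}=\alpha_{t_i}$ and so $t_jt_it_j=t_i$. Thus $t_j$ commutes with each of $t_1,\dots,t_{j-1}$, as required.

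The main subtle point is making sure both words extend to $T$-reduced expressions of $c$ using a common suffix, which is exactly why $x=y$ must be established first. After cancelling $t_j$, the remaining words lie in $[1,c]_T$ because they are prefixes of the rewritten expressions, so the induction applies.
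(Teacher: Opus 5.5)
Your proof is correct. The first part ($\M(x)=\M(y)$, then Theorem~\ref{thm_inj_subspaces}) matches the paper's, but you get the commutations by a different mechanism.

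The paper first reduces to showing that any two letters $t_i,t_j$ ($i<j$) whose order is reversed by $\sigma$ commute. It then observes that $t_it_j\leq_T x$ and $t_jt_i\leq_T y$, and applies Lemma~\ref{lem_2_ref_comm}. That lemma is itself a consequence of the injectivity of $\M$ on the rank-two elements $t_it_j$ and $t_jt_i$, which share the absolute moved space $\mathbb{R}\alpha_{t_i}\oplus\mathbb{R}\alpha_{t_j}$.

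You instead extract $B(\alpha_{t_i},\alpha_{t_j})=0$ directly from Lemma~\ref{lem_form_hurwitz}, by comparing the relative positions of $t_i$ and $t_j$ in two $T$-reduced expressions of $c$.

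Your route is more elementary. The commutation part never passes through the injectivity theorem, and it avoids the small Hurwitz-move argument needed to see that $t_it_j\leq_T x$ when $t_i,t_j$ are not adjacent in the word. The paper's route is shorter once Lemma~\ref{lem_2_ref_comm} is available, and it stays within the absolute-moved-space framework.

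Three minor comments:
\begin{enumerate}
\item Your $\varphi_c$ comparison works verbatim for every pair of letters inverted by $\sigma$. So the induction on $k$ can be replaced by the paper's reduction to inverted pairs, followed by a bubble-sort.
\item The common suffix, and hence knowing $x=y$ beforehand, is not actually needed for this step, contrary to what you call the main subtle point. Lemma~\ref{lem_form_hurwitz} holds for any $T$-reduced expression of $c$, and the relevant values of $\varphi_c$ depend only on relative positions. Any completions of the two words to $T$-reduced expressions of $c$ would do.
\item After cancelling $t_j$, the remaining words are suffixes, not prefixes, of the rewritten expressions. The conclusion still holds: $t_jx\in[1,c]_T$ because $t_jx\leq_T x\leq_T c$.
\end{enumerate}
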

	
	\begin{proof}
	We have $$\M(x) = \bigoplus_{i=1}^k \mathbb{R} \alpha_{t_i} = \M(y),$$ hence Theorem~\ref{thm_inj_subspaces} yields $x=y$. 
	
	For the second statement, it is enough to show that, if $i < j$ and $t_i$ appears after $t_j$ in $t_{\sigma(1)} t_{\sigma(2)} \cdots t_{\sigma(k)}$, then $t_i t_j = t_j t_i$. But this means that $t_i t_j \leq_T x$ and $t_j t_i \leq_T y = x$, hence Lemma~\ref{lem_2_ref_comm} concludes the proof. 
	\end{proof}

Proving that $[1,c]_T$ is not a lattice amounts to exhibiting a so-called \textit{bowtie} in the poset, that is, a quadruple $\{a,b,x,y\}$ of distinct elements such that $a$ and $b$ are maximal lower bounds for $x$ and $y$ and $x$ and $y$ are minimal upper bounds for $a$ and $b$ (see~\cite[Section 10]{McCammond}). In all known examples, one observes that it never happens that such bowties have their elements occurring in successive ranks, that is, one never has $\ell_T(a) = k = \ell_T(b)$ and $\ell_T(x) = k+1 = \ell_T(y)$ for some $1 \leq k \leq n-2$. The next proposition shows that this is a general property, holding in an arbitrary Coxeter system:

	\begin{prop}[Absence of bowties in successive ranks]\label{prop:bow_succ}
	Let $(W,S)$ be a Coxeter system and $c\in W$ a Coxeter element. There exists no quadruple $\{a, b, x, y\}$ of pairwise distinct elements of $[1,c]_T$ such that 
	\begin{itemize}
		\item $\ell_T(a) = \ell_T(b) = k$, $\ell_T(x) = \ell_T(y) = k+1$, for some $1 \leq k \leq n-2$, 
		\item $a \leq_T x, a \leq_T y, b \leq_T x, b \leq_T y$. 
	\end{itemize}
	\end{prop}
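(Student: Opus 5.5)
The plan is to argue by contradiction using absolute moved spaces and a dimension count. Suppose such a quadruple $\{a,b,x,y\}$ exists. By Theorem~\ref{thm_inj_subspaces}, the map $\M$ is order-preserving. Hence $\M(a)$ and $\M(b)$ are both contained in $\M(x)\cap\M(y)$. By Proposition~\ref{lem_wm_basic}, these spaces have dimensions $\dim \M(a)=\dim\M(b)=k$ and $\dim\M(x)=\dim\M(y)=k+1$. Since $a\neq b$ and $\M$ is injective, we get $\M(a)\neq\M(b)$. Therefore $\M(a)+\M(b)$ has dimension at least $k+1$.

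This sum is contained in $\M(x)\cap \M(y)$, which has dimension at most $k+1$. It follows that
\[
\M(a)+\M(b)=\M(x)\cap\M(y)=\M(x)=\M(y).
\]
In particular $\M(x)=\M(y)$. Injectivity in Theorem~\ref{thm_inj_subspaces} then gives $x=y$, contradicting pairwise distinctness. This completes the argument.

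The only step needing care is the dimension count. Two distinct $k$-dimensional subspaces span a space of dimension at least $k+1$, and this forces both $(k+1)$-dimensional spaces $\M(x)$ and $\M(y)$ to equal that span. Everything else is a direct application of injectivity and monotonicity of $\M$, together with $\dim\M(w)=\ell_T(w)$. The hypothesis $1\leq k\leq n-2$ is not needed for the argument; it only excludes trivial ranks.
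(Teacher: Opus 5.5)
Your proof is correct and is essentially the paper's argument: both rest on the injectivity and monotonicity of $x\mapsto\M(x)$ from Theorem~\ref{thm_inj_subspaces} together with $\dim\M(w)=\ell_T(w)$. The only difference is that the dimension count runs in the mirror direction. The paper uses $x\neq y$ to get $\M(x)\cap\M(y)=\M(a)=\M(b)$, contradicting $a\neq b$, whereas you use $a\neq b$ to get $\M(x)=\M(y)$, contradicting $x\neq y$.
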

	
	\begin{figure}[h!]
			\[\begin{tikzcd}
			{\ell_T = k+1} && x && y \\
			{\ell_T = k} && a && b
			\arrow[no head, from=2-3, to=1-3]
			\arrow[no head, from=2-3, to=1-5]
			\arrow[no head, from=2-5, to=1-3]
			\arrow[no head, from=2-5, to=1-5]
		\end{tikzcd}\]
		\caption{A bowtie in successive ranks as in Proposition~\ref{prop:bow_succ}}.
		\label{bowtie_succ}
	\end{figure}
	
	\begin{proof}
	Assume for contradiction that such a configuration of four elements exists. We consider the four subspaces $\M(a), \M(b), \M(x), \M(y)$ of $V$. By Theorem~\ref{thm_inj_subspaces}, we have $\dim(\M(a)) = \dim(\M(b)) = k$, $\dim(\M(x)) = \dim(\M(y)) = k+1$, and these subspaces are pairwise distincts. Since the map $z \mapsto \M(z)$ is order-preserving, we also have $\M(a) \subseteq \M(x)$, and $\M(a) \subseteq \M(y)$, and the same holds with $a$ replaced by $b$. Since $\M(x) \neq \M(y)$ and they have the same dimension, namely $k+1$, this forces $\M(x) \cap \M(y)$ to have dimension at most $k$, and since $\M(a)$ in included in both and has dimension $k$, we have $\M(x) \cap \M(y) = \M(a)$. The same holds with $a$ replaced by $b$, hence $\M(a) = \M(b)$, contradicting the injectivity of the map $z \mapsto \M(z)$ from Theorem~\ref{thm_inj_subspaces}. 
	\end{proof}
	
	\subsection{Cactus bases}
	
	Cactus bases we introduced by Bellingeri, Godelle and Paris~\cite[Section 3.1]{trickles} as a data from which one can define a generalization of a cactus group. 
	
	\begin{definition}[{\cite[Section 3.1]{trickles}}]\label{def:cactus}
	Let $G$ be a group. A \defn{cactus basis} (based on $G$) is a non-empty family $\mathcal{F}$ of pairs $X=( G_X, \Delta_X)$ were $G_X$ is a subgroup of $G$ and $\Delta_X\in G_X$, such that the following two conditions are satisfied:
	\begin{enumerate}
		\item $G_X \neq G_Y$ if $X \neq Y$, $X, Y\in \mathcal{F}$, 
		\item For $X, Y\in \mathcal{F}$, if $G_Y \subseteq G_X$, then $$(\Delta_X G_Y \Delta_X^{-1}, \Delta_X \Delta_Y \Delta_X^{-1})\in \mathcal{F}\text{~and~}(\Delta_X^{-1} G_Y \Delta_X, \Delta_X^{-1} \Delta_Y \Delta_X)\in \mathcal{F}.$$ 
	\end{enumerate}
	\end{definition}
	
	It has been shown in~\cite[Section 3.1, Example 2]{trickles} that, when $W$ is a finite Coxeter group and $c\in W$ is a Coxeter element, the family $\mathcal{F} = \{(P(x), x) \mid x\in [1,c]_T\}$ is a cactus basis. We extend this result to those cases where the map $x \mapsto P(x)$ is an isomorphism of posets onto its image; by Corollary~\ref{isom_rank_three} this also includes all Coxeter systems of rank three with arbitrary choices of Coxeter elements. 
	
	\begin{lemma}
	Let $(W,S)$ be a Coxeter system and $c\in W$ a Coxeter element. Assume that the map $[1,c]_T \longrightarrow \mathrm{RS}(W)$, $x \mapsto P(x)$, is an isomorphism of posets onto its image. Then $$\mathcal{F}:=\{ (P(x), x) \mid x\in [1,c]_T\}$$ is a cactus basis. 
	\end{lemma}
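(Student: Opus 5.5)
We verify the two axioms of Definition~\ref{def:cactus} for $\mathcal{F}=\{(P(x),x)\mid x\in[1,c]_T\}$, with $G=W$. Each element of $\mathcal{F}$ is a pair $(P(x),x)$, and $x\in P(x)$ because any $T$-reduced expression of $x$ has all its letters in $T(x)$.

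Condition (1) is immediate from Theorem~\ref{poset_2}. If $X=(P(x),x)$ and $Y=(P(y),y)$ are distinct elements of $\mathcal{F}$, then $x\neq y$. Injectivity of $x\mapsto P(x)$ then gives $P(x)\neq P(y)$.

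For condition (2), let $X=(P(x),x)$ and $Y=(P(y),y)$ with $P(y)\subseteq P(x)$. By the hypothesis that $x\mapsto P(x)$ is an isomorphism of posets onto its image, we get $y\leq_T x$. The key step is to show that $xyx^{-1}\in[1,c]_T$ and $x^{-1}yx\in[1,c]_T$.

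Write $x=yz$ with $\ell_T(y)+\ell_T(z)=\ell_T(x)$, and concatenate to get $c=yzw$ with $\ell_T(c)=\ell_T(y)+\ell_T(z)+\ell_T(w)$. Then $x=yz=z(z^{-1}yz)$. Since $\ell_T$ is conjugation invariant, this is again a length-additive factorization, so $x^{-1}yx=z^{-1}yz\leq_T x\leq_T c$. Similarly, writing $x=z'y$ using that $y$ is also a suffix-divisor of $x$ (prefix and suffix divisibility agree, as recalled in Section~\ref{prelim}), we obtain $x=(z'yz'^{-1})z'$, so $xyx^{-1}=z'yz'^{-1}\leq_T x\leq_T c$. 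This uses that $[1,x]_T$ is closed under conjugation by $x^{\pm1}$, which follows from $T$-reduced words being closed under the Hurwitz action.

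Once $xyx^{-1}\in[1,c]_T$ is known, Lemma~\ref{lemm_p_prop}(2) applied with $w=x$ gives $P(xyx^{-1})=xP(y)x^{-1}$. Hence $(xP(y)x^{-1},xyx^{-1})=(P(xyx^{-1}),xyx^{-1})\in\mathcal{F}$. The same argument with $w=x^{-1}$ gives $(x^{-1}P(y)x,x^{-1}yx)\in\mathcal{F}$.

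The main obstacle is the membership $x^{\pm1}yx^{\mp1}\leq_T c$. It must be justified carefully: the conjugated element has to land in $[1,x]_T$, which is the interval where the order is controlled. It is also the only place where the isomorphism hypothesis is used, namely to convert the inclusion $P(y)\subseteq P(x)$ into $y\leq_T x$.
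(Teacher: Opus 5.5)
Your proof is correct and follows the paper's argument. Condition (1) comes from injectivity (Theorem~\ref{poset_2}); for condition (2) the isomorphism hypothesis turns $P(y)\subseteq P(x)$ into $y\leq_T x$, conjugation invariance gives $x^{\pm1}yx^{\mp1}\leq_T x\leq_T c$, and Lemma~\ref{lemm_p_prop}(2) finishes. The only difference is that you check $x^{\pm1}yx^{\mp1}\leq_T x$ explicitly through the length-additive factorizations $x=z(z^{-1}yz)=(z'yz'^{-1})z'$, and you also verify $x\in P(x)$, whereas the paper simply invokes the stability of $T$ under conjugation.
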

	
	\begin{proof}
	The first condition of Definition~\ref{def:cactus} is satisfied since by Theorem~\ref{poset_2} the map $x \mapsto P(x)$ is always injective. We now check the second condition, where the assumption is needed. Let $x, y\in [1,c]_T$ such that $P(y) \subseteq P(x)$. Then by assumption, we have $ y \leq_T x$. Since $T$ is stable by conjugation, we have $x y x^{-1} \leq_T x x x^{-1} = x$, hence $x y x^{-1} \in [1,c]_T$. Moreover, it follows from point~(2) of Lemma~\ref{lemm_p_prop} that $x P(y) x^{-1} = P(xyx^{-1})$. This shows that $( x P(y) x^{-1}, x y x^{-1})\in \mathcal{F}$. The proof for $( x^{-1} P(y) x, x^{-1} y x)$ is similar.  
	\end{proof}
	
	\subsection{New proof of the lattice property in rank three}
	
		The aim of this subsection is to give a new proof that $[1,c]_T$ is a lattice when $W$ is a Coxeter group of rank three. It is obtained as a corollary of Proposition~\ref{prop:bow_succ}.

	\begin{cor}[New proof of the lattice property in rank three]\label{cor_new_three}
	Let $(W,S)$ be a Coxeter system of rank three and $c\in W$ a Coxeter element. Then the poset $[1,c]_T$ is a lattice. 
	\end{cor}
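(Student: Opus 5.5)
The plan is to use the bowtie criterion for bounded graded posets, recalled before Proposition~\ref{prop:bow_succ} (following~\cite[Section 10]{McCammond}). A finite bounded poset fails to be a lattice if and only if it contains a bowtie $\{a,b,x,y\}$: pairwise distinct elements with $a,b$ maximal lower bounds of $\{x,y\}$ and $x,y$ minimal upper bounds of $\{a,b\}$. When $[1,c]_T$ is infinite, finiteness can be replaced by the fact that it is graded by $\ell_T$ with bounded rank. Since the rank is at most $n$, every set of upper bounds of $\{a,b\}$ has minimal elements. If two elements have no join, there are two distinct minimal upper bounds $x,y$. Then $x$ and $y$ have no meet, since a meet $m$ would be an upper bound of $a,b$ below both $x$ and $y$, forcing $m=x=y$ by minimality. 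Picking maximal lower bounds $a,b$ of $\{x,y\}$ that lie above the original pair (again possible by bounded rank) gives a bowtie. So it suffices to show that no bowtie exists when $n=3$.

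Next I use the rank structure. Here $[1,c]_T$ has rank levels $0,1,2,3$, with $1$ the unique element of rank $0$ and $c$ the unique element of rank $3$. In a bowtie, $x$ and $y$ are incomparable (each is minimal among upper bounds and they are distinct), and similarly for $a$ and $b$. Neither $a$ nor $b$ can be $c$, because they lie strictly below $x$; for the same reason neither $x$ nor $y$ is $1$. If $a=1$, then $b$ would be a lower bound comparable to $a$, contradicting incomparability. Hence $a,b$ have ranks in $\{1,2\}$ and $x,y$ have ranks in $\{1,2\}$, with $a,b<_T x,y$ strictly. This forces $\ell_T(a)=\ell_T(b)=1$ and $\ell_T(x)=\ell_T(y)=2$. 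That is exactly a bowtie in successive ranks with $k=1$, and $1\le k\le n-2=1$ holds. Proposition~\ref{prop:bow_succ} then gives a contradiction, so $[1,c]_T$ is a lattice.

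The main obstacle is making the first step rigorous without assuming finiteness of $[1,c]_T$, which can be infinite for infinite $W$. I would handle this only through the grading: every chain has length at most $3$, so every nonempty subset of upper bounds has a minimal element. One could alternatively argue directly that the meet of any two elements exists, since a poset with a top element in which every pair has a meet (and with bounded chains) is a lattice. In rank three this direct argument reduces to the same situation: two atoms below two distinct coatoms, excluded by the dimension argument of Proposition~\ref{prop:bow_succ}.
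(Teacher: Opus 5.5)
Your proposal is correct and follows the same route as the paper: reduce the failure of the lattice property to a bowtie, observe that in rank three the bowtie must sit in ranks $1$ and $2$, and apply Proposition~\ref{prop:bow_succ} with $k=1$. You also supply details the paper leaves implicit, namely the bowtie criterion for a possibly infinite $[1,c]_T$ via bounded rank and the explicit rank analysis. Two small omissions are covered by the dual arguments: $x,y\neq c$ follows from the incomparability of $x$ and $y$, and a failing meet is handled by the dual of your join argument.
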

	
	\begin{proof}
	Since $\ell_T(1)= 0$ and $\ell_T(c) = 3$, the only obstruction to the lattice property can come from a quadruple $a, b, x, y$ of elements as in Proposition~\ref{prop:bow_succ}, with $k=1$. Proposition~\ref{prop:bow_succ} guarantees that such a quadruple cannot exist.  
	\end{proof}
	
	\begin{rmq}
	The proof of the lattice property of $[1,c]_T$ in rank three that we gave in~\cite{Gobet_max_dih} is obtained from the more general statement that in an abritrary Coxeter group $(W,S)$, given any pair $u, v\in W$ such that $u \leq_T v$ and $\ell_T(v) = \ell_T(u) + 3$, the poset $[u,v]_T$ is a lattice. The above corollary is then obtained by considering $W$ of rank three and setting $u=1, v=c$. The proof of Proposition~\ref{prop:bow_succ} only holds inside a noncrossing partition poset, but the quadruple does not need to lie inside an interval of rank three, hence it gives more information on $[1,c]_T$ than our previous proof.  
	\end{rmq}
	
	\subsection{New families of noncrossing partition posets failing to be lattices}
	
	The aim of this subsection is to show Theorem~\ref{thm_fail_lattice}. To this end, we exhibit bowties in noncrossing partition posets $[1,c]_T$ for infinitely many Coxeter groups of rank $4$. As a consequence, for every Coxeter system $(W,S)$ containing such a rank four subgroup as a standard parabolic subgroup, and compatible choices of Coxeter elements in the sense of the next lemma, the lattice property of $[1,c]_T$ fails (Corollary~\ref{cor_fail}).   
	
	\begin{lemma}\label{lem_parab_fail}
	Let $(W, S)$ be a Coxeter system and let $(W', S')$ be a standard parabolic subgroup of $(W,S)$. Let $c$ be a Coxeter element of $W$ and let $c'$ be the Coxeter element of $W'$ obtained by restricting $c$, that is, by removing from and $S$-reduced expression of $c$ the letters which are not in $S'$. If $[1,c']_{T'}$ fails to be a lattice, then $[1,c]_T$ fails to be a lattice. 
	\end{lemma}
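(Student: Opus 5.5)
The plan is to show that $[1,c']_{T'}$ embeds into $[1,c]_T$ as a lower order ideal, namely $[1,c']_{T'} = [1,c']_T$ as posets. A bowtie in $[1,c']_{T'}$ will then be a bowtie in $[1,c]_T$.

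First we check that $c' \leq_T c$. Since $c'$ is a product of pairwise commuting-or-ordered letters, we can reorder an $S$-reduced expression of $c$ into a $T$-reduced expression having $c'$ as a prefix. Write $c = s_1 s_2 \cdots s_n$ and let $i_1<\dots<i_m$ be the positions of the letters of $S'$. Moving each such letter $s_{i_j}$ to the left conjugates the letters it passes into reflections. This gives $c = s_{i_1}\cdots s_{i_m}\, t_1 \cdots t_{n-m}$ with $t_j\in T$. Since $\ell_T(c)=n$ and $\ell_T(c')=m$ (both by Theorem~\ref{thm_dyer_ref_length}), this is a $T$-reduced expression, so $c'\leq_T c$. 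Consequently $[1,c']_T \subseteq [1,c]_T$ is the lower interval below $c'$. Meets and joins in a lower interval agree with those of the ambient poset when they exist there, so a bowtie $\{a,b,x,y\}$ inside $[1,c']_T$ remains a bowtie in $[1,c]_T$. Indeed, any common upper bound $z$ of $a,b$ in $[1,c]_T$ with $z\leq_T x$ lies in $[1,c']_T$, and any common lower bound of $x,y$ lies below $x\leq_T c'$.

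Next we identify $[1,c']_T$ with $[1,c']_{T'}$. The key input is the fact, cited in the proof of Lemma~\ref{lemm_p_prop}(3) from~\cite[Theorem 1.4]{BDSW}, that every $T$-reduced decomposition of an element of a parabolic subgroup $W'$ has all its factors in $W'$, hence in $T'=T\cap W'$. Alternatively, this follows from absolute moved spaces: by Lemma~\ref{lemm_p_prop}(1) and Proposition~\ref{lem_wm_basic}, any $t\leq_T c'$ has $\alpha_t\in\M(c')=V'$, so $t\in W'$. Either way we get $\ell_T(w)=\ell_{T'}(w)$ for $w\in W'$, because a $T'$-expression is a $T$-expression and a $T$-reduced one lies in $T'$. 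It also follows that for $u,v\in W'$ we have $u\leq_T v \iff u\leq_{T'} v$. Moreover, any $u\leq_T c'$ has all reduced factors in $T'$, so $u\in W'$. Thus $[1,c']_T=[1,c']_{T'}$ as posets.

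Combining the two steps, a non-lattice $[1,c']_{T'}$ contains a bowtie, which by the first step is a bowtie in $[1,c]_T$, so $[1,c]_T$ is not a lattice. The main point needing care is the second step, the parabolic-closure statement $\ell_T=\ell_{T'}$ on $W'$. I would rely on the BDSW result, or on the $\M(c')=V'$ argument restricted to elements below $c'$, which is all we need. I would also verify the bowtie definition for extendability: minimal upper bounds of $a,b$ within the interval stay minimal in the whole poset, because the interval is a down-set.
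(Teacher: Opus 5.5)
Your proposal is correct and follows the paper's proof: show $c'\le_T c$, then get $[1,c']_T=[1,c']_{T'}$ because every reflection below $c'$ lies in $W'$, then transfer the failure through the down-set $[1,c']_T$. For the middle step the paper uses your alternative route, namely $\alpha_t\in\M(c')=V'$ together with \cite[Proposition 3.3]{BH}, rather than \cite{BDSW}; your version also adds the details the paper leaves implicit (why $c'\le_T c$, why a bowtie in the lower interval remains one in $[1,c]_T$), and you rightly note that the moved-space route gives $\ell_T=\ell_{T'}$ only on $[1,c']_T$, which suffices.
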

	
	\begin{proof}
	Denote by $T'$ the set $W'\cap T = \bigcup_{w\in W'} w S' w^{-1}$ of reflections of $W'$ and by $V'\subseteq V$ the geometric representation of $(W',S')$. We have $c' \leq_T c$, and $\ell_{T'}(c') = |S'| = \ell_T(c')$. Now let $t\in T$ such that $ t\leq_T c'$. By definition of absolute moved spaces we have $$\alpha_t \in \mathbb{R} \alpha_t = \M(t) \subseteq \M(c') = V'$$ which, since $W'$ is standard parabolic, forces $t$ to lie in $W'$ (see~\cite[Proposition 3.3]{BH}). We deduce that $[1, c']_T = [1, c']_{T'}$. This implies that, if the lattice property fails inside $[1,c']_{T'}$, then it also fails inside $[1,c]_T$. 
	\end{proof}
	
	Consider a Coxeter group $W$ of rank $4$ with a diagram of the form \[\begin{tikzcd}
		s_1 && s_2 \\
		\\
		s_4 && s_3
		\arrow["{2k+1}", no head, from=1-1, to=1-3]
		\arrow["\ell", no head, from=1-3, to=3-3]
		\arrow["\ell", no head, from=3-1, to=1-1]
		\arrow["{2k' +1}", no head, from=3-3, to=3-1]
	\end{tikzcd}\]
	where $\ell \geq 3$, $k,k' \geq 1$. Let $c= s_3 s_1 s_2 s_4$ The main result of the subsection is the following: 
	
	\begin{theorem}\label{thm_fail_lattice}
	Let $W$ be a Coxeter group of rank $4$ with a diagram of the above form. Then $[1,c]_T$ is not a lattice. 
	\end{theorem}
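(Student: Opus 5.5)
We will exhibit an explicit bowtie $\{a,b,x,y\}$ in $[1,c]_T$ with $\ell_T(a)=\ell_T(b)=1$ and $\ell_T(x)=\ell_T(y)=3$. By Proposition~\ref{prop:bow_succ}, no bowtie has elements in successive ranks, so with $n=4$ this is the only possible shape. The plan is to mimic Digne's $\widetilde{A_3}$ bowtie for $c=s_3s_1s_2s_4$, which is the case $k=k'=1$, $\ell=3$. We then check that the odd labels $2k+1$, $2k'+1$ let the relevant reflections be conjugated around the edges $s_1s_2$ and $s_3s_4$ as in the $\widetilde{A_3}$ case.

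\emph{Candidate bowtie.} Write $c=s_3s_1s_2s_4$. We use Hurwitz moves (Theorem~\ref{thm_hurw_trans}) to produce two $T$-reduced expressions of $c$ ending in suitable reflections. The first is $c=s_3 s_1 s_2 s_4$ itself. The second is obtained by moving $s_4$ to the left past $s_2$ and $s_1$, giving $c=s_3\,(s_4)\,(s_4s_1s_4)(s_4s_2s_4)$, or similar variants. Natural choices of atoms are $a=s_1$ and $b=s_3$. For the coatoms we take $x=c\,t$ and $y=c\,t'$ for two reflections $t,t'$, so that $x,y\in[1,c]_T$ automatically. Both $t$ and $t'$ must be chosen so that $s_1$ and $s_3$ each appear in some reduced factorisation of both $x$ and $y$. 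Concretely, we look for $t=s_4$ and for $t'$ a reflection obtained from $s_2$ by conjugating along the $s_1$–$s_2$ edge. Oddness of $2k+1$ is what makes $s_1$ and $s_2$ conjugate, and this keeps the construction uniform in $k$.

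\emph{Upper bounds.} To show $a,b\le_T x,y$, we write explicit $T$-reduced expressions of $c$ of the form $a\,\cdot\,\cdot\,t$ and $b\,\cdot\,\cdot\,t$, and likewise with $t'$. Each is obtained by Hurwitz moves, so reducedness is automatic.

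\emph{Minimality.} We must also show that $a$ and $b$ are maximal lower bounds and $x$ and $y$ are minimal upper bounds. We will argue that any $z$ with $a,b\le_T z\le_T x,y$ must have rank $2$. Then $\M(z)\supseteq \mathbb{R}\alpha_a\oplus\mathbb{R}\alpha_b$ by Theorem~\ref{thm_inj_subspaces}, so $\M(z)=\mathbb{R}\alpha_{s_1}\oplus\mathbb{R}\alpha_{s_3}$, and Theorem~\ref{thm_inj_subspaces} gives $z=s_1s_3$, since $s_1s_3=s_3s_1\le_T c$. It then suffices to show $s_1s_3\not\le_T x$ (or $y$). Equivalently, $\M(s_1s_3)\not\subseteq \M(x)$, or directly, there is no reduced expression. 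The cleanest test is Proposition~\ref{prop:ortho_y}: $\M(x)$ is the $\varphi_c$-orthogonal of $\alpha_{t}$ in the appropriate sense, namely $\M(x)=\{v:\varphi_c(v,\alpha_t)=0\}$ by the left-right analogue. So we only need to compute $\varphi_c(\alpha_{s_1},\alpha_t)$ or $\varphi_c(\alpha_{s_3},\alpha_t)$ and show it is nonzero. This is a short computation with the Tits form, using $B(\alpha_i,\alpha_j)=-\cos(\pi/m_{ij})$ and $\ell\ge3$.

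\emph{Main obstacle.} The main difficulty is choosing $t,t'$ correctly. With the natural candidate above, $\M(s_1s_3)$ may lie inside $\M(x)$, and the failure of the poset-isomorphism property (Example~\ref{ex:fail_poset}) means a subspace inclusion alone is inconclusive. We would first try to transplant Digne's $\widetilde{A_3}$ bowtie verbatim, replacing each conjugation $s_is_js_i$ by the longest-element-type conjugate $(s_is_j)^{k}s_i(s_js_i)^{k}$ along the odd edges. We would then verify non-comparability either via the $\varphi_c$-criterion or via Dyer's deletion criterion (Theorem~\ref{thm_dyer_ref_length}) applied to $z^{-1}x$.
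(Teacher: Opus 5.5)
\textbf{There is a genuine gap: the atoms you chose cannot work.} For every diagram in the family, no bowtie can have $a=s_1$ and $b=s_3$. Here is why. Let $x\in[1,c]_T$ with $\ell_T(x)=3$ and $s_1,s_3\le_T x$, and put $t=x^{-1}c\in T$. Take a $T$-reduced expression of $x$ beginning with $s_1$ (resp.\ $s_3$) and append $t$; this gives a $T$-reduced expression of $c$, so Lemma~\ref{lem_form_hurwitz} yields $\varphi_c(\alpha_{s_1},\alpha_t)=\varphi_c(\alpha_{s_3},\alpha_t)=0$. Now $s_3,s_1$ are the first two letters of $c=s_3s_1s_2s_4$ and $B(\alpha_{s_1},\alpha_{s_3})=0$. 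So the definition of $\varphi_c$ gives $\varphi_c(\alpha_{s_1},\alpha_{s_j})=\delta_{1j}$ and $\varphi_c(\alpha_{s_3},\alpha_{s_j})=\delta_{3j}$, i.e.\ the $\alpha_{s_1}$- and $\alpha_{s_3}$-coordinates of $\alpha_t$ vanish. Hence $\alpha_t\in\mathbb{R}\alpha_{s_2}\oplus\mathbb{R}\alpha_{s_4}$, so $t$ lies in the standard parabolic $\langle s_2,s_4\rangle\cong A_1\times A_1$ (as in the proof of Lemma~\ref{lem_parab_fail}), i.e.\ $t\in\{s_2,s_4\}$. Therefore $x\in\{s_3s_1s_4,\ s_3s_1s_2\}$, and both have $s_3s_1=s_1s_3$ as a $T$-prefix. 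Thus $s_1s_3$ is the join of $s_1$ and $s_3$, and no choice of $t,t'$ rescues the construction; your concrete candidate $t=s_4$ gives $x=s_3s_1s_2$, visibly above $s_1s_3$.

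This also shows your proposed test is self-defeating. $\varphi_c(\alpha_{s_1},\alpha_t)\neq 0$ means $\alpha_{s_1}\notin\M(x)$, i.e.\ $s_1\not\le_T x$, contradicting $a\le_T x$. And once $s_1,s_3\le_T x$, the inclusion $\M(s_1s_3)\subseteq\M(x)$ is automatic, so no subspace criterion can separate $s_1s_3$ from $x$.

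\textbf{What is missing} is a pair of atoms whose root plane carries no rank-two element of $[1,c]_T$ at all. The paper takes the longest elements of the odd dihedral parabolics, $t_1=(s_1s_2)^ks_1$ and $t_2=(s_3s_4)^{k'}s_3$. These are reflections with roots proportional to $\alpha_{s_1}+\alpha_{s_2}$ and $\alpha_{s_3}+\alpha_{s_4}$; this is where oddness is used. The coatoms are $w_1=s_3t_1s_4$ and $w_2=s_1t_2s_2$, read off from $c=s_3t_1t_1's_4$ and $c=s_1t_2t_2's_2$.

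Minimality is then a non-existence statement, not a non-comparability check. A rank-two $w=tt'\le_T c$ above $t_1,t_2$ would have $\M(w)=\mathbb{R}\alpha_{t_1}\oplus\mathbb{R}\alpha_{t_2}$. This forces $t,t'$ into the maximal dihedral reflection subgroup containing $t_1,t_2$, which is infinite since $B(\alpha_{t_1},\alpha_{t_2})\le -1$. One computes $\omega_c(\alpha_{t_1},\alpha_{t_2})=0$. By Reading--Speyer's Proposition~4.1, $\omega_c$ then vanishes on every pair of reflections of that subgroup, so $\omega_c(\alpha_t,\alpha_{t'})=0$. Lemma~\ref{lem_omega_2} then gives $B(\alpha_t,\alpha_{t'})=0$, so $t$ and $t'$ commute, which is impossible for distinct reflections of an infinite dihedral group. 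Neither this choice of atoms nor the $\omega_c$ argument appears in your plan.
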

	
	\begin{proof}
	We choose as reflection $t_1$ (respectively $t_2$) the longest element in the standard dihedral parabolic subgroup $W_1=\langle s_1, s_2 \rangle$ (resp. $W_2=\langle s_3, s_4 \rangle$), that is, $t_1 = (s_1 s_2)^k s_1$ and $t_2 = (s_3 s_4)^{k'} s_3$. We let $w_1 = s_3 t_1 s_4$ and $w_2 = s_1 t_2 s_2$. Setting $t_1'= (s_1 s_2)^{k-1} s_1$ and $t_2'=(s_3 s_4)^{k'-1} s_3$, we have $$ c = s_3 s_1 s_2 s_4 = s_3 t_1 t_1' s_4 = s_3 t_1 s_4 (s_4 t_1' s_4)$$ and $$ c = s_1 s_3 s_4 s_2 = s_1 t_2 t_2' s_2 = s_1 t_2 s_2 (s_2 t_2' s_2).$$ We therefore get that $w_1, w_2\in [1,c]_T$ and $\ell_T(w_1) = \ell_T(w_2) = 3$. Moreover we have $w_1 = t_1 (t_1 s_3 t_1) s_4$ and also $w_1 = s_3 s_4 (s_4 t_1 s_4) = t_2 t_2' (s_4 t_1 s_4)$, which yields $t_1, t_2\leq_T w_1$, and similarly we have $t_1, t_2 \leq_T w_2$. We thus have a configuration 			\[\begin{tikzcd}
		{\ell_T = 3} && w_1 && w_2 \\
		{\ell_T = 1} && t_1 && t_2
		\arrow[no head, from=2-3, to=1-3]
		\arrow[no head, from=2-3, to=1-5]
		\arrow[no head, from=2-5, to=1-3]
		\arrow[no head, from=2-5, to=1-5]
	\end{tikzcd},\] which yields a candidate for a bowtie. To conclude the proof, it suffices to show that there is no element $w\in [1,c]_T$ such that $\ell_T(w) = 2$ and $t_1, t_2 \leq_T w$. This will show that the quadruple $(t_1, t_2, w_1, w_2)$ is indeed a bowtie in $[1,c]_T$. 
	
	Assume for contradiction that such an element exists. There are thus $t, t'\in T$ such that $t_1, t_2 \leq_T t t'=w \leq_T c$. In particular, by Lemma~\ref{lem_omega_2}, we have $\omega_c( \alpha_t, \alpha_{t'}) = - 2 B(\alpha_t, \alpha_{t'})$.
	
	Since the standard parabolic subgroups $W_1$ and $W_2$ are of odd dihedral type and $t_1, t_2$ are their longest elements, there are $\lambda, \mu \geq 1 $ such that $\alpha_{t_1} = \lambda (\alpha_{s_1} + \alpha_{s_2})$ and $\alpha_{t_2} = \mu (\alpha_{s_1} + \alpha_{s_2})$. More precisely we have $\frac{1}{\lambda} = 2 \sin(\frac{\pi}{2 (2k + 1)})$ and $\frac{1}{\mu} =2 \sin(\frac{\pi}{2 (2k' + 1)})$. 
	
	We calculate \begin{align*}
	 B(\alpha_{t_1}, \alpha_{t_2}) & = \lambda \mu B(\alpha_{s_1} + \alpha_{s_2}, \alpha_{s_3} + \alpha_{s_4}) = \lambda \mu (B(\alpha_{s_1}, \alpha_{s_4}) +B(\alpha_{s_2}, \alpha_{s_3}) ) \\ &= - \lambda \mu 2 \cos( \pi / \ell) \leq - 2 \cos(\pi / \ell) \leq -1,
	\end{align*}
	We thus have that $\langle t_1, t_2 \rangle$ is an infinite dihedral reflection subgroup. 
	
	We also have 
	\begin{align*}
		\varphi_c(\alpha_{t_1}, \alpha_{t_2}) & = \lambda \mu \varphi_c(\alpha_{s_1} + \alpha_{s_2}, \alpha_{s_3} + \alpha_{s_4}) = \lambda \mu (\varphi_c(\alpha_{s_1}, \alpha_{s_4}) +\varphi_c(\alpha_{s_2}, \alpha_{s_3}) ) \\ &= \lambda \mu 2 B(\alpha_{s_2}, \alpha_{s_3})= -2 \lambda \mu \cos(\pi / \ell),
	\end{align*}
	and similarly we get $\varphi_c(\alpha_{t_2}, \alpha_{t_1}) = \lambda \mu 2 B(\alpha_{s_4}, \alpha_{s_1})= -2 \lambda \mu \cos(\pi / \ell) = \varphi_c(\alpha_{t_1}, \alpha_{t_2})$. This yields $\omega_c( \alpha_{t_1}, \alpha_{t_2}) = 0$. 
	
	Let $q, q'$ denote the canonical generators of the unique maximal dihedral reflection subgroup $W'$ of $W$ containing $t_1$ and $t_2$. This subgroup in particular contains $\langle t_1, t_2 \rangle$ which is infinite, it is thus infinite dihedral as well. Moreover, by~\cite[Remark (3.2)]{Dyer_Bruhat}, this subgroup $W'$ has as reflections precisely those $r\in T$ such that $\alpha_r \in \mathbb{R} \alpha_q \oplus \mathbb{R} \alpha_{q'}$. Since $$\mathbb{R} \alpha_t \oplus \mathbb{R} \alpha_{t'} = \M(w) = \mathbb{R} \alpha_{t_1} \oplus \mathbb{R} \alpha_{t_2} = \mathbb{R} \alpha_{q} \oplus \mathbb{R} \alpha_{q'},$$ we get that $t, t'\in W'$. 
	
	Now by~\cite[Proposition 4.1]{RS}, if $\omega_c(\alpha_q, \alpha_{q'}) \neq 0$, then $\omega_c(\alpha_{r}, \alpha_{r'}) \neq 0$ for every pair $r, r'$ of distinct reflections in $W'$. Since, as computed above, we have $\omega_c(\alpha_{t_1}, \alpha_{t_2}) = 0$, this forces $\omega_c( \alpha_q, \alpha_{q'}) = 0$, and hence again by~\cite[Proposition 4.1]{RS}, we get that $\omega_c( \alpha_t, \alpha_{t'}) = 0$. But since $\omega_c( \alpha_t, \alpha_{t'}) = - 2 B(\alpha_t, \alpha_{t'})$ we conclude that $tt'=t't$. This is a contradiction, as $t$ and $t'$ are distinct reflections in $W'$ which is infinite dihedral. 
	
	We thus have that $(t_1, t_2, w_1, w_2)$ is a bowtie in $[1,c]_T$, and hence, that $[1, c]_T$ is not a lattice.   
	\end{proof}

	Using Lemma~\ref{lem_parab_fail} we immediately deduce: 
	
	\begin{cor}\label{cor_fail}
	Let $(W,S)$ be a Coxeter system containing as a standard parabolic subgroup a Coxeter system $(W', S')$ satisfying the assumptions of Theorem~\ref{thm_fail_lattice}. Let $c$ be a Coxeter element in $W$ such that its restriction $c'$ to $W'$ satisfies the assumptions of Theorem~\ref{thm_fail_lattice}. Then $[1,c]_T$ fails to be a lattice. 
	\end{cor}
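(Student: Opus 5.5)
The statement to prove is Corollary~\ref{cor_fail}. The plan is to combine Theorem~\ref{thm_fail_lattice} with Lemma~\ref{lem_parab_fail}; the only real work is checking that the hypotheses of the lemma are met.

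First, let $(W',S')$ with $S'=\{s_1,s_2,s_3,s_4\}\subseteq S$ be the standard parabolic subsystem whose Coxeter diagram has the shape required in Theorem~\ref{thm_fail_lattice}. Here $\ell\geq 3$, $k,k'\geq 1$, and the labels are attached to the edges exactly as in that diagram. Since $W'$ is standard parabolic, its Coxeter matrix is the restriction of that of $W$. Hence $(W',S')$ is itself a Coxeter system with that diagram.

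Next, let $c'$ be the restriction of $c$ to $S'$: take an $S$-reduced expression of $c$ and delete the letters not in $S'$. By hypothesis, $c'$ satisfies the assumptions of Theorem~\ref{thm_fail_lattice}, i.e.\ $c'=s_3s_1s_2s_4$ after labelling the vertices appropriately. One small point needs care. The restriction is well defined as an element of $W'$, because two reduced expressions of $c$ differ only by commutations of adjacent letters, and restricting preserves such commutations. With that settled, Theorem~\ref{thm_fail_lattice} applied to $(W',S')$ and $c'$ shows that $[1,c']_{T'}$ is not a lattice, where $T'$ is the set of reflections of $W'$.

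Finally, Lemma~\ref{lem_parab_fail} applies directly: $(W',S')$ is a standard parabolic subsystem, $c'$ is the restriction of $c$, and $[1,c']_{T'}$ fails to be a lattice. Therefore $[1,c]_T$ fails to be a lattice. There is no substantial obstacle beyond this bookkeeping.
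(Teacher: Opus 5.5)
Your proposal is correct and follows the paper's own argument: the corollary comes from applying Theorem~\ref{thm_fail_lattice} to $(W',S')$ with the restricted Coxeter element $c'$, then transferring the failure of the lattice property to $[1,c]_T$ via Lemma~\ref{lem_parab_fail}. Your extra checks (the Coxeter matrix restricts, and the restriction $c'$ is well defined because reduced expressions of $c$ differ only by commutations) are accurate.
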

	
	\section{Conjectures and questions}
	
	In this section, we collect some existing questions or conjectures related to the material introduced in this paper, and also formulate some new ones. 
	
	\subsection{On the properties of elements of $[1,c]_T$}
	
	The following two questions are classical in the field (see for instance~\cite[Questions 3.1 to 3.3]{Paolini_survey}):
	
	\begin{question}\label{conj_cox_el}
	Let $(W,S)$ be a Coxeter system and $c\in W$ a Coxeter element. Let $x\in [1,c]_T$. Is $x$ a Coxeter element in $P(x)$? 
	\end{question}
	
	A positive answer to this question would imply a positive answer to the following one:
	
	\begin{question}\label{conj_hur_trans_prefix}
		Let $(W,S)$ be a Coxeter system and $c\in W$ a Coxeter element. Let $x\in [1,c]_T$. Is the Hurwitz action transitive on the $T$-reduced expressions of $x$?
	\end{question}
	
	\subsection{On the maps}
	
	\begin{conjecture}\label{conj_isom_3maps}
	Let $(W,S)$ be a Coxeter system and $c\in W$ a Coxeter element. Both maps $x \mapsto P(x)$ and $x\mapsto T(x)$ from $[1,c]_T$ to $\mathrm{RS}(W)$ and $\mathcal{P}(T)$ are isomorphisms of posets onto their images. 
	\end{conjecture}
	
	By Proposition~\ref{prop_3_maps}, the conjecture holds if $W$ has rank at most three.
	
	\subsection{On the subgroups $P(x)$}
	
	Little is known on the structure of the reflection subgroups $P(x)$ in the general case. Dyer has shown the following, which generalizes the existence of maximal dihedral reflection subgroups: 
	
	\begin{theorem}[{\cite[Theorem~1.8~(1)]{Dyer_low}}]
	Let $(W,S)$ be a Coxeter system. Let $t_1, t_2, \dots, t_k\in T$ such that $\alpha_{t_1}, \alpha_{t_2}, \dots, \alpha_{t_k}$ are linearly independent. Let $W'$ be the rank $k$ reflection subgroup $\langle t_1, t_2, \dots, t_k \rangle$. Then the set $\mathcal{R}(W')$ of reflection subgroups of $W$ which have rank $k$ and contain $W'$ has a maximum element $c(W')$. 
	\end{theorem}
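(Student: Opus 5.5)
The plan is to take for $c(W')$ the subgroup
$$W_U:=\langle t\in T \mid \alpha_t\in U\rangle, \qquad U:=\mathbb{R}\alpha_{t_1}\oplus\cdots\oplus\mathbb{R}\alpha_{t_k},$$
which generalizes the description of maximal dihedral reflection subgroups in \cite[Remark 3.2]{Dyer_Bruhat}. Two facts must then be established: (a) every reflection subgroup $W''\supseteq W'$ of rank $k$ satisfies $W''\subseteq W_U$; (b) $W_U$ itself has rank $k$. Together they show that $W_U\in\mathcal{R}(W')$ and that it dominates every element of $\mathcal{R}(W')$, so it is the maximum.

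For (a), let $\Pi''$ be the set of canonical simple roots of $W''$, so $|\Pi''|=k$. By Dyer's theory \cite{Dyer_subgroups}, every reflection of $W''$ is a conjugate of a canonical generator by an element of $W''$. Hence every root of $W''$ lies in $\mathrm{span}(\Pi'')$, which has dimension at most $k$. Since $t_1,\dots,t_k\in W''$, this span contains $U$, and $\dim U=k$. Therefore $\mathrm{span}(\Pi'')=U$. Every reflection of $W''$ then has its root in $U$, so it lies in $W_U$, and $W''\subseteq W_U$.

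For (b), first note that $W_U$ stabilizes $U$, since each of its generators does. By \cite[Corollary 3.11]{Dyer_subgroups}, the reflections of $W_U$ are exactly the $t\in T$ with $\alpha_t\in U$, so $\Phi\cap U$ is the root system of $W_U$. Its canonical simple system $\Pi_U$ lies in $U$ and spans $U$ (it contains enough roots to produce $\alpha_{t_1},\dots,\alpha_{t_k}$), so $|\Pi_U|\geq k$. The main obstacle is the reverse inequality $|\Pi_U|\leq k$. In a general Coxeter group, canonical simple roots of a reflection subgroup need not be linearly independent, so this is a genuine issue.

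For the hard step I would first try induction on $k$. The case $k=1$ is trivial, and the case $k=2$ is exactly Dyer's maximal dihedral subgroup result. For the inductive step, pick $\beta\in\Pi_U$. Apply the induction hypothesis to the hyperplanes $U\cap H$ of $U$ spanned by roots, which are spanned by $k-1$ of the given independent roots or by Hurwitz-type modifications of them. The goal is to show that the simple roots of $W_U$ lying in such a hyperplane number at most $k-1$, and then to use the pairwise condition $B(\beta,\gamma)\le 0$, or $B(\beta,\gamma)=-\cos(\pi/m)$, for $\beta,\gamma\in\Pi_U$ to rule out an extra simple root outside a spanning set. Concretely, I would suppose $\beta_1,\dots,\beta_{k+1}\in\Pi_U$, write a linear relation among them with coefficients of mixed signs (both signs must occur, since $\Pi_U$ lies in the positive cone), and try to derive a contradiction with the fact that $\Pi_U$ is the set of roots $\gamma$ such that the only positive root of $W_U$ sent to a negative root by $s_\gamma$ is $\gamma$ itself.

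If this direct route fails, the fallback is Dyer's original strategy. One reduces to the case where $W'$ is contained in a standard parabolic subgroup of a suitably enlarged Coxeter system, in the spirit of Remark~\ref{rmq:genmov}, where independence of the roots can be transported to independence of simple roots. Rank considerations are then controlled by the geometric representation of that parabolic subgroup.
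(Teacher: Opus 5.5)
\textbf{The central claim of your plan, that $W_U$ has rank $k$, is false, so $W_U$ cannot be the maximum.} The paper gives no proof of this statement; it only quotes Dyer. Your plan therefore has to stand on its own.

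\textbf{Step (b) fails.} Step (a) is correct. Step (b) is not merely the hard step; it is false, and with it the identification $c(W')=W_U$. Already in affine type, the reflections whose roots lie in a $k$-dimensional subspace $U$ spanned by independent roots can generate a subgroup of rank larger than $k$.

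Take $W$ of type $\widetilde{A_3}$ as in Example~\ref{ex:fail_poset}, with $t_1=s_1s_2s_1$, $t_2=s_1s_4s_1$, $t_3=s_4s_3s_4$. Their roots $\alpha_1+\alpha_2$, $\alpha_1+\alpha_4$, $\alpha_3+\alpha_4$ are linearly independent. They span $U=\{\sum_i a_i\alpha_i \mid a_1+a_3=a_2+a_4\}$, which is $\M(x)$ there. The subgroup $W'=\langle t_1,t_3\rangle\times\langle t_2\rangle\cong D_\infty\times A_1$ has rank $3$.

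But $U$ also contains $\alpha_2+\alpha_3$, and in fact $W_U=\langle t_1,t_3\rangle\times\langle t_2,s_2s_3s_2\rangle\cong D_\infty\times D_\infty$. Its canonical simple roots are $\alpha_1+\alpha_2$, $\alpha_3+\alpha_4$, $\alpha_1+\alpha_4$, $\alpha_2+\alpha_3$, with pairwise Tits form values in $\{0,-1\}$. They satisfy the mixed-sign relation $(\alpha_1+\alpha_2)+(\alpha_3+\alpha_4)=(\alpha_1+\alpha_4)+(\alpha_2+\alpha_3)$, so the contradiction you hope to extract from such a relation does not exist. Thus $|\Pi_U|=4>\dim U$ and $W_U\notin\mathcal{R}(W')$.

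Your reasoning does work for $k\le 2$: a pointed planar cone cannot contain three roots with pairwise nonpositive Tits form. From $k=3$ on, pointedness gives nothing, and here the four simple roots span a square cone in the $3$-space $U$.

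\textbf{The maximum is not determined by $U$ at all.} By your step (a), every member of $\mathcal{R}(W')$ lies in $W_U$. Reflection subgroups of $D_\infty\times D_\infty$ are products of reflection subgroups of the two factors, and $\langle t_1,t_3\rangle$ is already the whole first factor. Hence $\mathcal{R}(W')=\{W'\}$, so $c(W')=W'\subsetneq W_U$. Symmetrically, $\langle t_1\rangle\times\langle t_2,s_2s_3s_2\rangle$ is a different rank-$3$ reflection subgroup with the same span $U$, and it is its own maximum.

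Consequently no candidate built from $U$ alone can work. Your induction over hyperplanes of $U$ aims at the false inequality $|\Pi_U|\le k$. The fallback embedding into a larger Coxeter system (as in Remark~\ref{rmq:genmov}) controls only the linear independence of the $\alpha_{t_i}$, not the rank of $W_U$.

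What is missing is a construction of $c(W')$ from $W'$ itself, not just from $\mathrm{span}(\alpha_{t_1},\dots,\alpha_{t_k})$. You also need a proof that $\mathcal{R}(W')$ has a largest element, even though distinct rank-$k$ reflection subgroups with the same span can be incomparable. This is the genuinely nontrivial content of Dyer's theorem, which the paper only cites.
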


	\begin{conjecture}\label{px_1}
	Let $(W,S)$ be a Coxeter system and $c\in W$ a Coxeter element. Let $x\in [1,c]_T$. Let $t_1 t_2 \cdots t_k$ be a $T$-reduced expression of $x$. Then $P(x) = c(\langle t_1, t_2, \dots, t_k\rangle)$.
	\end{conjecture}
	
	\begin{conjecture}\label{px_2}
	Let $(W,S)$ be a Coxeter system and $c\in W$ a Coxeter element. Let $x\in [1,c]_T$. For every $w\in P(x)$, the $T$-reduced expressions of $w$ in $W$ have all their factors in $P(x)$.  
	\end{conjecture}
	
	By Lemma~\ref{lemm_p_prop}, Conjecture~\ref{px_1} holds if $\ell_T(x)=2$. Conjecture~\ref{px_2} is also easily deduced in this case: elements of $P(x)$ are all products of at most two reflections of $P(x)$. If $w = t t' = q q'$ with $t, t'\in P(x)$, then $q, q'$ also lie in $P(x)$ since it is maximal dihedral (see for instance~\cite[Lemma 1.7]{Gobet_max_dih}). Both conjectures also hold for standard parabolic Coxeter elements $c' \leq_T c$, and when $P(x)$ is finite (in both cases $P(x)$ is parabolic). 
	
	The last conjecture is motivated by the following observations: when $W$ is finite, the subgroups $P(x)$ are parabolic, while if $W$ is infinite this is not the case in general (see Example~\ref{ex_nonparab}). It still seems natural to expect that they have properties similar to parabolic subgroups. Recall that parabolic subgroups are stable by intersection. For arbitrary $W$, the family of reflection subgroups with the property that for every element of such a group, all factors of the $T$-reduced expressions (in $W$) of the element are in the subgroup, is also stable by intersection. It might be a suitable generalization of parabolic subgroups. 
	
	It is not even clear that the intersection of two $P(x)$'s is a reflection subgroup of $W$. 
	
	\bigskip 
	
	\textbf{Acknowledgements and AI use.} The author thanks Jad Abou-Yassin and Jean-Yves Hée for stimulating discussions. He is partially supported by the ANR (Agence Nationale de la Recherche) project CaGeT (ANR-25-CE40-4162). AI was used to search the literature and check some computations from examples (free versions of ChatGPT and Claude). Everything was carefully checked after using these tools. The ideas, results and proofs come from the author, who takes full responsibility for their correctness.


\begin{thebibliography}{10}
		
		\bibitem{Abouyassin} J.~Abou-Yassin, \textsl{A generalization in affine type $A$ of Coxeter sortable elements and Reading's bijection with noncrossing partitions}, preprint (2026), \url{https://arxiv.org/abs/2605.02668}. 

		\bibitem{AB} P.~Abramenko and K.S.~Brown, \textsl{Buildings}, Grad. Texts in Math., 248
		Springer, New York, 2008, xxii+747 pp.
	
	
	\bibitem{Arm} D.~Armstrong, \textsl{Generalized noncrossing partitions and combinatorics of Coxeter groups}, Mem. Amer. Math. Soc. {\bf 202} (2009), no. 949, x+159 pp.

		
		\bibitem{BDSW} B.~Baumeister, M.~Dyer, C.~Stump, P.~Wegener, \textsl{A note on the transitive Hurwitz action on decompositions of parabolic Coxeter elements}, \emph{ Proc. Amer. Math. Soc. Ser. B} {\bf 1} (2014), 149-154.
		
		\bibitem{trickles} P.~Bellingeri, E.~Godelle, L.~Paris, \textsl{Trickle groups}, preprint (2024). \url{https://arxiv.org/abs/2412.04932}. 
		
	\bibitem{Dual} D.~Bessis, \textsl{The dual braid monoid}, Ann. Sci. \'{E}cole Norm. Sup. {\bf 36} (2003), 647-683.
	
			\bibitem{Bessis_free} D.~Bessis, \textsl{A dual braid monoid for the free group}, J. Algebra {\bf 302} (2006), 275-309.	
		
	\bibitem{Biane} P.~Biane, \textsl{Some properties of crossings and partitions}, Discrete Math. {\bf 175} (1997), no. 1-3, 41-53.
	
	\bibitem{BB} A. Bj\"orner and F. Brenti, \textsl{Combinatorics of Coxeter groups},
	Graduate Text in Math. Springer, 2005.
	
	\bibitem{Bourbaki} N.~Bourbaki, \textsl{Groupes et algèbres de Lie}, Éléments de mathématique, Chapitres IV-VI, Paris, Hermann, 1968. 	
		
	\bibitem{Brady} T.~Brady, \textsl{A partial order on the symmetric group and new $K(\pi,1)$'s for the braid groups},
	Adv. Math. {\bf 161} (2001), no. 1, 20–40.	
		
	\bibitem{BW_ortho} T.~Brady and C.~Watt, \textsl{A partial order on the orthogonal group}, 
	Comm. Algebra {\bf 30} (2002), no. 8, 3749--3754.
		
					\bibitem{BW_geom} T. Brady and C. Watt, \textsl{$K(\pi,1)$'s for Artin groups of finite type}, Geom. Dedicata {\bf 94} (2002), 225–250.
		
		\bibitem{BW_lattice} T. Brady and C. Watt, \textsl{Non-crossing partition lattices in finite real reflection groups}, Trans. Amer. Math. Soc. {\bf 360} (2008), no. 4, 1983–2005.
		
		\bibitem{BR} L.G.~Brestensky and N.~Reading, \textsl{Noncrossing partitions of an annulus},
		Comb. Theory {\bf 5} (2025), no. 1, Paper No. 12, 49 pp.	
		
		\bibitem{BH_a} B.~Brink and R.B.~Hohwlett, \textsl{A finiteness property and an automatic structure for Coxeter groups}, Math. Ann. {\bf 296} (1993), no. 1, 179–190.	
	
		
		\bibitem{BH} B.~Brink and R.~Howlett, \textsl{Normalizers of parabolic subgroups in Coxeter groups}, Invent. Math. {\bf 136} (1999), no. 2, 323–351.
		
			\bibitem{Carter} R.W.~Carter, \textsl{Conjugacy Classes in the Weyl
			group}, Compositio Math. {\bf 25} (1972), 1-59.
		
			\bibitem{Garside}
		P.~Dehornoy, F.~Digne, D.~Krammer, E.~Godelle, and J.~Michel.
		\textsl{Foundations of Garside theory}, Tracts in Mathematics {\bf 22}, Europ.\ Math.\ Soc.\
		(2015).
		
		\bibitem{Deodhar} V.V.~Deodhar, \textsl{A note on subgroups generated by reflections in Coxeter groups},
		Arch. Math. (Basel) {\bf 53} (1989), no. 6, 543–546. 
		
		
				\bibitem{DPS} E.~Delucchi, G.~Paolini, and M.~Salvetti, \textsl{Dual structures on Coxeter and Artin groups of rank three}, to appear in Geometry \& Topology (2023).
		
		
		\bibitem{Dig1} F.~Digne, \textsl{Présentations duales des groupes de tresses de type affine $\tilde{A}$}, \textit{Dual presentations of braid groups of affine type $\tilde{A}$}, Comment. Math. Helv. {\bf 81} (2006), no. 1, 23-47.
		
		\bibitem{Dig2} F.~Digne, \textsl{A Garside presentation for Artin groups of type $\tilde{C}_n$}, Ann. Inst. Fourier {\bf 62} (2012), no. 2, 641-666.
		
		\bibitem{Dus} K.~Duszenko, \textsl{Reflection length in non-affine Coxeter groups}, Bull. Lond. Math. Soc. {\bf 44} (2012), no. 3, 571-577.
		
		\bibitem{Dyer_subgroups} M.J.~Dyer, \textsl{Reflection subgroups of Coxeter systems}, J. of Algebra {\bf 135} (1990), Issue 1, 57--73.
		
		\bibitem{Dyer_Bruhat} M.J.~Dyer, \textsl{On the Bruhat graph of a Coxeter system}, Compositio Math. {\bf 78} (1991), no. 2, 185-191.
		
		\bibitem{Dyer_refn} M.J.~Dyer, \textsl{On minimal lengths of expressions of Coxeter group elements as products of reflections}, Proc. Amer. Math. Soc. {\bf 129} (2001), no. 9, 2591-2595.
		
		\bibitem{Dyer_low} M.J.~Dyer, \textsl{$n$-low elements and maximal rank $k$ reflection subgroups of Coxeter groups}, J. Algebra {\bf 607} (2022), part B, 139–180.
		
		\bibitem{Gobet_sortable} T.~Gobet, \textsl{Dual Garside structures and Coxeter sortable elements}, J. Comb. Algebra {\bf 4} (2020), no. 2, 167–213.
		
		\bibitem{Gobet_max_dih} T. Gobet, \textsl{On maximal dihedral reflection subgroups and generalized noncrossing partitions}, Proc. Amer. Math. Soc. {\bf 152} (2024), no. 10, 4095–4101.
		
		\bibitem{HK} A.~Hubery and H.~Krause, \textsl{A categorification of non-crossing partitions}, J. Eur. Math. Soc. (JEMS) {\bf 18} (2016), no. 10, 2273–2313.
		
		\bibitem{Humphreys} J.~Humphreys, \textsl{Reflection groups and Coxeter groups},
		Cambridge Stud. Adv. Math., 29,
		Cambridge University Press, Cambridge, 1990. xii+204 pp.
		
		\bibitem{IS} K.~Igusa and R.~Schiffler, \textsl{Exceptional sequences and clusters}, J. Algebra {\bf 323} (2010), no. 8, 2183–2202.
		
		\bibitem{IT}  C.~Ingalls and H.~Thomas, \textsl{Noncrossing partitions and representations of quivers}, Compos. Math. {\bf 145} (2009), no. 6, 1533–1562.
		
		\bibitem{BLMC} J.B.~Lewis, J.~McCammond, T.K.~Petersen, P.~Schwer, \textsl{Computing reflection length in an affine Coxeter group}, Trans. Amer. Math. Soc. {\bf 371} (2019), no. 6, 4097–4127.
		
		
		\bibitem{McCammond} J.~McCammond, \textsl{Dual euclidean Artin groups and the failure of the lattice property},
		J. Algebra {\bf 437} (2015), 308-343.
		
		\bibitem{MS} J.~McCammond and R.~Sulway, \textsl{Artin groups of Euclidean type},
		Invent. Math. {\bf 210} (2017), no.1, 231–282.
		
		\bibitem{Obrien} S.~O'Brien, \textsl{The dual Artin isomorphism for Artin groups of XXL type}, preprint (2026), \url{https://arxiv.org/abs/2606.13296}. 
		
		\bibitem{Paolini_survey} G.~Paolini, \textsl{The dual approach to the $K(\pi, 1)$ conjecture},
			Geometric methods in group theory—papers dedicated to Ruth Charney, 177–201. Sémin. Congr., {\bf 34}, Société Mathématique de France, Paris, 2025.
		
		\bibitem{PS} G.~Paolini and M.~Salvetti, \textsl{Proof of the $K(\pi,1)$ conjecture for affine Artin groups}, Invent. Math. {\bf 224} (2021), no. 2, 487–572.
		
		\bibitem{Paris_irred} L.~Paris, \textsl{Irreducible Coxeter groups}, Internat. J. Algebra Comput. {\bf 17} (2007), no. 3, 427–447.
		
		\bibitem{Reading_shards} N.~Reading, \textsl{Noncrossing partitions and the shard intersection order}, J. Algebraic Combin. {\bf 33} (2011), no. 4, 483–530.
		
		\bibitem{Reading_sortable} N.~Reading, \textsl{Clusters, Coxeter-sortable elements and noncrossing partitions}, Trans. Amer. Math. Soc.  {\bf 359}  (2007),  no. 12, 5931-5958.
		
		\bibitem{RS} R.~Reading and D.~Speyer, \textsl{Sortable elements in infinite Coxeter groups}, Trans. Amer. Math. Soc. {\bf 363} (2011), no. 2, 699–761.
		
		\bibitem{Speyer} D.~Speyer, \textsl{Powers of Coxeter elements in infinite groups are reduced}, Proc. Amer. Math. Soc. {\bf 137} (2009), no. 4, 1295–1302.

		
	\end{thebibliography}
	\end{document}